\documentclass{article}

\usepackage{arxiv}

\usepackage[utf8]{inputenc} 
\usepackage[T1]{fontenc}    
\usepackage{hyperref}     
\usepackage{url}           
\usepackage{booktabs}     
\usepackage{amsfonts}     
\usepackage{amsthm}
\usepackage{nicefrac}    
\usepackage{microtype}  
\usepackage{lipsum}		 
\usepackage{graphicx}
\usepackage{natbib}
\usepackage{doi}
\usepackage{kpfonts}
\usepackage{multirow}

\usepackage{float}
\usepackage{caption}

\usepackage{xcolor}

\newtheorem{theorem}{Theorem}[section]

\newtheorem{lemma}[theorem]{Lemma}
\newtheorem{corollary}[theorem]{Corollary}

\newtheorem{remark}[theorem]{Remark}

\newtheorem{assumption}[theorem]{Assumption}
\newtheorem{problem}[theorem]{Problem}

\newcommand{\R}{\mathbb{R}}
\newcommand{\N}{\mathbb{N}}
\newcommand{\DD}{\mathcal{D}}
\newcommand{\Dw}{\mathcal{D}(\omega)}
\newcommand{\DDw}{\overline{\mathcal{D}(\omega)}}
\newcommand{\Dr}{\mathcal{D}_{\text{ref}}}
\newcommand{\DDr}{\overline{\mathcal{D}_{\text{ref}}}}
\newcommand{\Ar}{A_{\text{ref}}}
\newcommand{\Fr}{f_{\text{ref}}}
\newcommand{\Ur}{u_{\text{ref}}}

\newcommand{\T}{\mathcal{T}}
\newcommand{\dd}{d\lambda^d(x) \, dP(\omega)}
\newcommand{\ddm}{d\lambda^d(x) \, d\mu(y)}

\newcommand{\LH}{L^2(\Omega;H_0^1(\Dr;\R))}

\newcommand{\tLH}{L^{2;(M)}(\Gamma;H_0^1(\Dr;\R))}

\newcommand{\tB}{\bar{B}^{(M)}}
\newcommand{\tF}{\bar{F}^{(M)}}

\newcommand{\tE}{\bar{\mathcal{E}}^{(M)}}
\newcommand{\tu}{\bar{u}^{(M)}}
\newcommand{\tur}{\bar{u}^{(M)}_{\text{ref}}}
\newcommand{\tar}{\bar{A}^{(M)}_{\text{ref}}}
\newcommand{\tfr}{\bar{f}^{(M)}_{\text{ref}}}

\title{Deep learning methods for stochastic Galerkin approximations of random domain problems.}
\date{} 					

\author{{\hspace{1mm}Fabio Musco} \\
	Department of Mathematics\\
	University of Stuttgart\\
	\texttt{fabio.musco@ians.uni-stuttgart.de} \\
	\And
	{\hspace{1mm}Andrea Barth} \\
	Department of Mathematics\\
	University of Stuttgart\\
	\texttt{andrea.barth@ians.uni-stuttgart.de} \\
}

\renewcommand{\headeright}{}
\renewcommand{\undertitle}{}
\renewcommand{\shorttitle}{Deep learning methods for random domain problems}

\begin{document}
\bibliographystyle{plain}
\setcitestyle{numbers}
\maketitle

\begin{abstract}
This work considers strong and weak stochastic Galerkin approximations of random domain problems for the elliptic random partial differential equation (PDE). The random domain problems are realized by sufficiently regular random domain mapping, allowing a transformation on deterministic reference domain.
A traditional numerical method for solving the resulting high-dimensional coupled stochastic Galerkin systems is replaced by deep learning techniques. 
We compare a physics-informed neural network approach, based on the strong stochastic Galerkin residual, with a Deep Ritz approach, based on the weak stochastic Galerkin system, reformulated as Ritz energy minimization.
The neural networks serve as surrogates for the deterministic spectral coefficients of the respective stochastic Galerkin solution. The stochastic parameters are not used as neural network input parameters and the stochastic domain is not sampled during training. The efficiency of the methods is demonstrated on a randomly stretched interval and a randomly deformed annulus in two spatial dimensions. 
\end{abstract}

\keywords{Stochastic Galerkin method \and Physics-informed neural network \and Deep Ritz method \and Random domains \and Random domain mapping}


\section{Introduction}

The elliptic partial differential equation (PDE) is a fundamental model for diffusion processes, such as heat conduction, electrostatics or subsurface flows. In many applications, however, the underlying setting is not known exactly and, as a consequence, the uncertainty enters the mathematical model, resulting in an elliptic random partial differential equation. The elliptic random partial differential equation serves as a central benchmark problem in the field of uncertainty quantification, particularly as a prominent model for subsurface flows. In this work, we consider uncertainties in the domain of the diffusion itself, namely random domain problems for the elliptic PDE. 

A common approach to random domain problems is to describe admissible geometries through sufficiently regular random deformations of a given deterministic reference domain.
These random deformations are realized by a given random field, the so-called random domain mapping, which allows the governing equation to be transformed to an elliptic random PDE posed on this deterministic reference domain (see e.g.\ \cite{CastrillonCandas2016, CastrillonCandas2021, Hakula2024, Harbrecht2016, Harbrecht2021}). 
Under this transformation, the domain uncertainty is transferred to the driving functions of the PDE, resulting in a stochastic diffusion coefficient and stochastic forcing term.

In recent decades, a variety of techniques have been developed for quantifying the uncertainty in the resulting system. These methods can be broadly classified into two categories, intrusive and non-intrusive methods. We refer to \cite{Field2015, Tuminaro2011} for an overview and comparison of both approaches. 
The non-intrusive methods encompass all sampling based solution techniques, such as Monte Carlo methods, stochastic collocation methods and their respective variants (see e.g.\ \cite{Babuska2007, Barth2011, Cliffe2011, Graham2011, Kuo2016, Nobile2008}).
In this work, we follow a different objective. Rather than estimating statistics of the stochastic solution by repeatedly obtaining different deterministic solutions, we determine a spectral approximation of the solution random field itself. This is achieved using the intrusive stochastic Galerkin method, based on polynomial chaos expansions of the involved random fields (see e.g.\ \cite{Babuska2004, Babuska2005, Cohen2010, Deb2001, Ernst2011, Ghanem1999, Ghanem1991, Karniadakis2005, LeMaitre2010, Mugler2011, Mugler2013, Schwab2011, Todor2007, Wiener1938, Xiu2010, Xiu2002, XiuWienerAskey}). Stochastic Galerkin methods combined with the random domain mapping approach have been studied, for example by Xiu and Tartakovsky \cite{Xiu2006} and Kundu et al. \cite{Kundu2014}, primarily in the context of weak formulations. In this work, the existing strong stochastic Galerkin framework is extended to the setting of random domain mappings.

The polynomial chaos expansion describes a spectral expansion of the random fields with respect to a global orthonormal tensor product polynomial basis in the stochastic variables. 
For the purpose of the stochastic Galerkin method, polynomial chaos expansions of the involved random fields are obtained and truncated at a maximal total polynomial degree, resulting in finite spectral representations.
The global stochastic Galerkin method substitutes these finite spectral representations into the PDE with a subsequent Galerkin projection over the stochastic domain. The resulting stochastic Galerkin system is a usually high dimensional system of coupled deterministic PDEs for the solution spectral coefficients. 
A variety of techniques may be employed in order to solve the resulting coupled system, according to classical approaches. The high dimensional coupling of the equations, however, poses difficulties for practical applications with classical numerical solution techniques. Consequently, the maximal dimension of the coupled system is usually bounded by a relatively small number or specific decoupling strategies or sparse tensor methods are employed in the literature for special cases (see e.g.\ \cite{Ernst2011, Karniadakis2005, Mugler2011, Mugler2013, Schwab2011, Xiu2009}).
A common decoupling method is based on a specific double orthogonal basis, resulting in an exponentially growing number of decoupled partial differential equations (see \cite{Babuska2004, Babuska2005, Frauenfelder2005}). 
This decoupling approach, however, is only applicable in special cases, namely if the driving random fields are multilinear combinations of the driving random variables, which is not satisfied for general transformed random domain problems.

The central idea of this work is to combine the intrusive stochastic Galerkin method with a deep learning based solution approach. 
There are many ways to approximate solutions of partial differential equations by deep learning approaches, we refer to \cite{Beck2023, Blechschmidt2021, Cuomo2022, Karniadakis2021} for an overview. In particular, deep neural networks have been successfully applied to solve high dimensional partial differential equations, see e.g.\ \cite{Beck2019, Beck2023, Han2018, Musco2024, Nabian2019, Sirignano2018}.
The deep learning approaches developed in this work are based on the strong and weak stochastic Galerkin formulations. Deep neural networks are used as surrogates for the deterministic spectral coefficients of the polynomial chaos expansion of the corresponding stochastic Galerkin solutions.

The neural network training strategies based on the strong stochastic Galerkin formulations are derived by minimizing a least-squares type objective function constructed from the stochastic Galerkin projection of the strong residual of the transformed PDE. 
This strong residual based deep learning approach is closely related to the well-known physics-informed neural networks, popularized by Raissi et al.\ \cite{Raissi2019}, inspired by the work of Lagaris et al.\ \cite{Lagaris1997, Lagaris1998}. 
In contrast to a classical physics-informed neural network setting, our proposed training objective is not based directly on the strong residual of a parameterized partial differential equation, but on the projected residual of the corresponding intrusive strong stochastic Galerkin system. 
Tho combination of a physics-informed neural network solution method and random domain problems have previously been studied, see e.g.\ \cite{Bharadwaja2022, Burbulla2023, Kashefi2023, Kashefi2022}.
Bharadwaja et al.\ \cite{Bharadwaja2022} consider geometric uncertainty described by a single scalar parameter that is used as a neural network input parameter during training.
By contrast, in our proposed approach, the neural networks serve as surrogates for the deterministic spectral coefficients of the polynomial chaos representation of the stochastic Galerkin solution and therefore depend only on the spatial variable.
Consequently, the stochastic parameters do not enter the neural network as input variables or need to be sampled during the training procedure.
In \cite{Kashefi2023, Kashefi2022}, the authors propose to train neural networks simultaneously on different geometries. These collections of different geometries are, however, not formulated as random domains.
Burbulla \cite{Burbulla2023} extends the deterministic physics-informed neural network approach to smoothly deformed geometries. This approach as well as the transformation, however, is purely deterministic, and no polynomial chaos expansion or stochastic Galerkin projection is employed.
To the best of our knowledge, the combination of a random domain mapping and a physics-informed neural network approach, based on the intrusive stochastic Galerkin solution of a corresponding transformed reference formulation of the random domain problem has not previously been studied.
In particular, the random domain framework accommodates general random domain deformations represented by sufficiently regular random domain mappings.

The second neural network training strategy is based on the weak stochastic Galerkin formulation derived from the stochastic weak formulation of the transformed elliptic random PDE.
For deterministic PDEs, there are several works that extend the classical physics-informed neural network methods to deep learning approaches based on the weak formulation of PDEs. These approaches are commonly referred to as variational physics-informed neural networks, and related variants, see e.g.\ \cite{Berrone2022, Kharazmi2019, Kharazmi2021, KhodayiMehr2020}. In these methods, the optimization scheme is based on the weak residual, where the variational equation has to be evaluated against a prescribed collection of test functions in every training step. The corresponding integrals are approximated by numerical quadrature rules during the optimization process. 
Such approaches, however, are computationally very expensive and scale poorly with increasing dimensions, number of elements, test functions, and quadrature points limiting its use in real world applications and complex geometries, see e.g.\ \cite{Abda2025, Anandh2025, Liu2023}. 
While there are methods to computationally improve this, we follow a different approach, called the Deep Ritz Method, introduced by E and Yu \cite{E2018} for deterministic PDEs. In this approach, a neural network is trained to minimize the Ritz energy functional in order to approximate the weak solution of the PDE. 
This approach avoids the selection of a family of test functions and the evaluation of the weak residual for every test function in every training step, while still benefiting from the reduced order of differentiation and lower regularity assumptions that come with the weak formulation. It is, however, only valid for specific PDEs, where the weak problem admits an equivalent energy minimization problem. 
Versions of the Deep Ritz method have also been studied in the context of PDEs on varying domains. 
Kaltenbach and Zeinhofer \cite{Kaltenbach2025} work with parameterized varying geometries. The parameter describing the domain geometry is used as an input parameter of the neural network. Their training strategy is based on the Ritz energy functional that is additionally integrated over the parameter space of the varying domains. During training, the parameter space of the domain uncertainty as well as the spatial domain of the PDE are sampled or discretized.
In \cite{Nguyen2025}, the authors propose a geometry aware deep learning framework, based on energy minimization, extending the framework to more general geometries. The training procedure is separated into two stages, where geometry information are processed in the first stage by a different autoencoder or variational autoencoder neural network. In addition to the spatial coordinates, the neural network to approximate the PDE solution takes either parameters of a parameterized random geometry or the output of the neural network, used in the first stage, as input variables.
Analogously to the strong residual based training strategy, the neural networks in our proposed method are used as surrogates for the stochastic Galerkin spectral coefficients.
Hence, the stochastic parameters do not enter the neural network as input variables or need to be sampled during training. This developed approach provides a random domain framework, suitable for uncertainty quantification, allowing for a broad class of random domains, represented by sufficiently regular random domain mappings.
To the best of our knowledge, the combination of a random domain mapping and a deep learning approach, based on the Ritz energy minimization of the stochastic Galerkin system of the transformed problem, has not been addressed before in the literature. 

In Section~\ref{Section:PDE_on_RD}, we formulate the random domain problem for the elliptic PDE and the corresponding transformed elliptic random PDE on the deterministic reference domain that serves as basis for the subsequent stochastic Galerkin formulations. These are derived in Section~\ref{Section:SpectralExpansion}. For the stochastic weak formulation, the corresponding Ritz energy minimization problem is established. Furthermore, we assemble the corresponding stochastic Galerkin systems by explicitly carrying out the stochastic projections to obtain the computational forms of the respective stochastic Galerkin formulations. The computational forms provide the implementation basis of the deep learning training strategies derived in Section~\ref{sec:DL}. This constitutes the methodological contribution of this work. We derive the computational architectures of the neural networks called \textit{S-GalerkinNet}, based on the strong stochastic Galerkin residual, and the \textit{S-RitzNet}, based on the weak Ritz energy minimization. 
In Section~\ref{Section:Numerical_Experiments}, we present numerical experiments for the developed deep learning approaches. We consider two random domain problems, a randomly stretched interval in one spatial dimension, and a randomly deformed annulus in two spatial dimensions. The experiments compare the strong residual based and the weak Ritz energy based approaches with regards to approximation quality, computational complexity, training times and optimization behavior.



\section{Elliptic partial differential equation on random domains} \label{Section:PDE_on_RD}
Let $(\Omega, \mathcal{A}, P)$ be a complete and countably generated probability space and let $\Dw \subset \R^d,$ for any $\omega \in \Omega$, $d \in \N$, be an open, bounded and connected random domain with Lipschitz boundary $\partial \Dw$ and closure $\overline{\Dw}$. We consider the elliptic partial differential equation (PDE) with homogeneous Dirichlet boundary conditions on the random domain (RD). For the general setup of the random domain problem, we follow \citep{Hakula2024, Harbrecht2016}.

\begin{problem}[Strong RD formulation of the elliptic PDE] \label{Problem:StrongRD_Formulation}
Find the solution $u: \Omega \times \overline{\Dw} \to \R$, such that
\begin{align*}
- \nabla \cdot (a(x) \nabla u(\omega, x)) 
	& = 
f(x) 
	&& \hspace*{-3cm}
x \in \Dw, \, \omega \in \Omega, 	
	\\
u(\omega, x) 
	& =
0
	&& \hspace*{-3cm}
x \in \partial \Dw, \, \omega \in \Omega,
\end{align*}
where the arising differential operators are understood with respect to the spatial variable $x \in \Dw$.
\end{problem}
We consider the diffusion coefficient $a:\mathbf{D} \to \R$, modeling the permeability, and the forcing term $f: \mathbf{D} \to \R$, representing a forcing or source, to be defined on an open hold-all domain $\mathbf{D} \subset \R^d$, such that
\begin{align*}
\Dw \subset\mathbf{D} \text{ for every } \omega \in \Omega.
\end{align*}
We recall the definitions of the involved function spaces (see also \citep[Ch. 2]{Musco2024}). Let $
\DD \subset \R^d$, $p \in (0, \infty]$, and let
\begin{align*}
L^p(\DD; \R) := L^p((\DD , \mathcal{B}(\DD), \lambda^d); (\R , \mathcal{B}(\R)))
	= \left\lbrace \phi : \DD \to \R \text{ measurable} \, : \, \lVert \phi \rVert_{L^p(\DD; \R)} < \infty \right\rbrace, 
\end{align*}
denote the Lebesgue space, where $\mathcal{B}(\DD)$ denotes the Borel $\sigma$-algebra on $\DD$ and $\lambda^d$ the Lebesgue-measure. The corresponding (quasi-)norms are given by
\begin{align*}
\lVert \phi \rVert_{L^p(\DD; \R)} = \left( \int_\DD \lvert \phi(x) \rvert^p \, d\lambda^d(x) \right)^{1/p},
\end{align*} 
for $p \in (0, \infty)$, and
\begin{align*}
\lVert \phi \rVert_{L^{\infty}(\DD; \R)} = \inf \{c \geq 0 \, : \, \lvert \phi(x) \rvert \leq c \text{ for } \lambda^d \text{-a.e.\ } x \in \DD\},
\end{align*}
for $p = \infty$. Note that $\lVert \cdot \rVert_{L^p(\DD;\R)}$ defines a norm for $p \geq 1$, and a quasi-norm for $p \in (0,1)$. 
Furthermore, let $k \in \N$ and let
\begin{align*}
H^k(\DD; \R) := H^k((\DD, \mathcal{B}(\DD), \lambda^d); (\R, \mathcal{B}(\R)))
= \{ \phi \in L^2(\DD; \R) \, : \, D^\alpha \phi \in L^2(\DD; \R) \text{ for every } \alpha \in \N_0^d \text{ with } \lvert \alpha \rvert \leq k\},
\end{align*}
denote the $k$-th order Sobolev space, where $\alpha = (\alpha_1, \dots , \alpha_d) \in \N_0^d$ denotes a multiindex of order $\lvert \alpha \rvert = \alpha_1 + \dots + \alpha_d$, and 
\begin{align*}
D^\alpha \phi = \frac{\partial^{\alpha_1}}{\partial x_1^{\alpha_1}} \cdots \frac{\partial^{\alpha_d}}{\partial x_d^{\alpha_d}} \phi
\end{align*}
denotes the $\alpha$-th weak partial derivative of $\phi$. The defined Sobolev space, equipped with the norm
\begin{align*}
\lVert \phi \rVert_{H^k(\DD; \R) } = 
\bigg( \sum_{\lvert \alpha \rvert \leq k} \int_\DD \lvert D^\alpha \phi \rvert^2 \, d\lambda^d(x) \bigg)^{1/2},
\end{align*}
and corresponding inner product, is a Hilbert space. In the following, we denote the weak derivatives by the usual derivative notation.
We further introduce the first-order Sobolev space of functions that vanish at the boundary in the sense of traces via
\begin{align*}
H_0^1(\mathcal{D}; \R) := H_0^1((\mathcal{D}, \mathcal{B}(\mathcal{D}), \lambda^d) ; (\R, \mathcal{B}(\R)))
	= \left\lbrace \phi \in H^1(\DD; \R) \, : \,  \phi\big|_{\partial \DD} = 0 \text{ in the sense of traces}\right\rbrace,
\end{align*}
equipped with the (semi-) norm
\begin{align*}
\lVert \phi \rVert_{H_0^1(\DD; \R)} = \left( \int_\DD \lVert \nabla \phi(x) \rVert_2^2 \, d\lambda^d(x) \right)^{1/2},
\end{align*}
where $\lVert \cdot \rVert_2$ denotes the Euclidean norm on $\mathbb{R}^d$. Note that this defines a norm on $H_0^1(\DD; \R)$ due to the Poincaré inequality. The space $H_0^1(\DD; \R)$, equipped with the inner product
\begin{align*}
\langle \phi, \psi \rangle_{H_0^1(\DD; \R)} = \int_\DD \langle \nabla \phi(x), \nabla \psi(x) \rangle_2 \, d\lambda^d(x),
\end{align*}
is a separable Hilbert space. 
To extend the notion of real-valued random variables to those taking values in function spaces, we introduce the Lebesgue-Bochner spaces.
Let $(E, \lVert \cdot \rVert_E)$ be a Banach space and let $\mathcal{B}(E)$ denote the Borel $\sigma$-algebra on $E$, that is the smallest $\sigma$-algebra generated by the $\lVert \cdot \rVert_E$-open sets. 
We call a mapping $X: \Omega \to E$ \textit{strongly measurable random field}, if it is $\mathcal{A}-\mathcal{B}(E)$-measurable and has a separable range, i.e. $X(\Omega) \subset E$ is $\lVert \cdot \rVert_E$-separable. 
Let $p \in [1, \infty]$, and let
\begin{align*}
L^p(\Omega; E) := L^p((\Omega, \mathcal{A}, P); (E, \mathcal{B}(E)))
	= \left\lbrace \phi: \Omega \to E \text{ strongly measurable} \, : \, \lVert \phi \rVert_{L^p(\Omega; E)} < \infty \right\rbrace,
\end{align*}
denote the Lebesgue-Bochner space with the norm
\begin{align*}
\lVert \phi \rVert_{L^p(\Omega; E)} 
	&=
\left( \int_\Omega \lVert \phi(\omega) \rVert_E^p \, dP(\omega) \right)^{1/p}, \text{ for } p \in [1, \infty), 
	\\
\lVert \phi \rVert_{L^{\infty}(\Omega; E)} 
	&=
\inf \{c \geq 0 \, : \, \lVert \phi(\omega) \rVert_E \leq c \text{ for } P-\text{a.e. } \omega \in \Omega \}. 
\end{align*}
If $E$ is a separable Hilbert space, then the Lebesgue--Bochner space $L^2(\Omega; E)$ is a separable Hilbert space that is isomorphic to the tensor product Hilbert space $L^2(\Omega; E) \cong L^2(\Omega; \R) \otimes E$ (see \citep[Remark 2.19]{Schwab2011}).
We refer to \citep[App. E]{Cohn2013} for the derivation and more details on the Lebesgue--Bochner spaces and integrals. 
We denote the space of real-valued rank $k \in \N, \, k \geq 2$, tensors of dimensions $\varprod_{j = 1}^k d$ by $\mathcal{M}_k^d$. The space $\mathcal{M}_k^d$, equipped with the Frobenius inner product
\begin{align*}
\langle A, B \rangle_{F_k} := \sum_{i_1, \dots , i_k = 1}^d A_{i_1, \dots , i_k} B_{i_1, \dots , i_k},
\end{align*}
defines a separable Hilbert space $(\mathcal{M}_k^d, \langle \cdot , \cdot \rangle_{F_k})$ with induced norm
\begin{align*}
\lVert A \rVert_{F_k} = \left( \sum_{i_1, \dots , i_k = 1}^d A_{i_1, \dots , i_k}^2 \right)^{1/2}.
\end{align*}
Note that for $k = 2$, the Frobenius norm of a matrix $A \in \mathcal{M}_2^d$ has the representation
\begin{align*}
\lVert A \rVert_{F_2} = \sqrt{\sigma_1 + \dots + \sigma_d},
\end{align*}
where $\sqrt{\sigma_1}, \dots , \sqrt{\sigma_d} \geq 0$ denote the singular values of $A \in \mathcal{M}_2^d$, i.e.\ $\sigma_1, \dots , \sigma_d \geq 0$ denote the eigenvalues of $A^T A \in \mathcal{M}_2^d$.
Furthermore, for $k \in \N$, let
\begin{align*}
C^k(\DD; \R^d) := \left\lbrace \phi: \DD \to \R^d \, k\text{-times continuously differentiable} \, : \, \lVert \phi \rVert_{C^k(\DD; \R^d)} < \infty \right\rbrace
\end{align*}
denote the space of continuous functions that are $k$-times differentiable with continuous derivatives on $\DD$, equipped with the norm
\begin{align*}
\lVert \phi \rVert_{C^k(\DD; \R^d)} 
= \lVert \phi \rVert_{C^0(\DD ; \R^d)} + \sum_{j = 1}^k \lVert D^j \phi \rVert_{C^0(\DD; \mathcal{M}_{j+1}^d)}
:= \sup_{x \in \DD} \lVert \phi(x) \rVert_2 + \sum_{j = 1}^k \sup_{x \in \DD} \lVert D^j \phi(x) \rVert_{F_{j+1}}, 
\end{align*}
where $D^j \phi: \DD \to \mathcal{M}_{j+1}^d$ denotes the $j$-th derivative of $\phi$. In the following, we denote the Jacobian $D^1 \phi$ of a vector field $\phi$ by $J_\phi : \DD \to \mathcal{M}_2^d$.
We impose the following usual assumptions on the diffusion coefficient and forcing term.
\begin{assumption} \label{Assumption:Bounds_Det_Diffusivity}
Assume $a \in L^{\infty}(\mathbf{D}; \R)$, and that there exist constants $a_-, a_+ \geq 0$, such that
\begin{align*}
0 < a_- \leq a(x) \leq a_+ < \infty,
\end{align*}
for every $x \in \mathbf{D}$. Furthermore, let the forcing term $f: \mathbf{D} \to \R$ be a member of $L^2(\mathbf{D} ; \R)$.
\end{assumption}
In the following we transform the random domain problem to a reference problem on a deterministic domain. For that, we define an open bounded and connected deterministic reference domain $\Dr \subset \R^d$ with Lipschitz boundary $\partial \Dr$ and closure $\DDr$. We further impose the following assumptions on the random domains $\Dw, \omega \in \Omega$.
\begin{assumption} \label{Assumption:RD_And_Trafo_C1}
Assume there exists a random field $\T: \Omega \times \DDr \to \R^d$, the so-called random domain mapping, satisfying additional assumptions stated below, such that for $\omega \in \Omega$
\begin{align*}
\DDw = \T(\omega, \DDr).
\end{align*}
Furthermore the random domain mapping $\T(\omega, \cdot): \DDr \to \DDw$ is a $C^1$-diffeomorphism for every $\omega \in \Omega$, i.e.\ $\T(\omega, \cdot) \in C^1(\DDr; \DDw)$ is bijective and $\T^{-1}(\omega, \cdot) \in C^1(\DDw; \DDr)$ for every $\omega \in \Omega$. Additionally, we assume that there exists a uniform bound $C > 0$, such that
\begin{align*}
\lVert \T(\omega, \cdot) \rVert_{C^1(\DDr; \R^d)} \leq C, \quad \lVert \T^{-1}(\omega, \cdot) \rVert_{C^1(\DDw; \R^d)} \leq C
\end{align*}
for every $\omega \in \Omega$.
\end{assumption}
Under Assumption~\ref{Assumption:Bounds_Det_Diffusivity} and standard arguments, the pathwise bilinear form $\overline{B}: H_0^1(\Dw; \R) \times H_0^1(\Dw, \R) \to \R$ for $\omega \in \Omega$, defined by
\begin{align*}
\overline{B}(u,v;\omega) = \int_{\Dw} a(x) \langle \nabla u(x) , \nabla v(x) \rangle_2 \, d\lambda^d(x),
\end{align*}
is continuous and coercive.
This in turn guarantees, by the Lax--Milgram Lemma (see e.g.\ \citep[Ch. 6.2, Thm. 1]{Evans2022}), the existence and uniqueness of a solution to the pathwise weak formulation of Problem \ref{Problem:StrongRD_Formulation}, stated as follows, for any $\omega \in \Omega$.
\begin{problem}[Pathwise weak RD formulation of the elliptic PDE] \label{Problem:PathwiseWeakFormulation_RD_Problem}
For a given forcing term $f \in L^2(\mathbf{D}; \R)$ and $\omega \in \Omega$, find $u(\omega, \cdot) \in H_0^1(\Dw ; \R)$, such that
\begin{align*}
\overline{B}(u(\omega, \cdot) ,v;\omega) = \overline{F}(v; \omega) \text{ for every } v \in H_0^1(\Dw ; \R),
\end{align*}
where $\overline{F}(v; \omega) := \int_{\Dw} f(x) v(x) \, d\lambda^d(x)$.
\end{problem}
By a transformation, we derive an equivalent formulation on the deterministic reference domain. For $\omega \in \Omega$ 
\begin{align*}
\int_{\Dw} a(x) \langle \nabla u(\omega, x) , \nabla v(x) \rangle_2 \, d\lambda^d(x)
	&=
\int_{\T(\omega, \Dr)} a(x)\langle \nabla u(\omega, x) , \nabla v(x) \rangle_2 \, d\lambda^d(x) 
	\\
	&=
\int_{\Dr} a(\T(\omega, x)) \langle (\nabla u) (\omega, \T(\omega, x)) , (\nabla v) (\T(\omega, x)) \rangle_2 \cdot \lvert \operatorname{det}(J_{\T}(\omega, x)) \rvert \, d\lambda^d(x).
\end{align*}
By the chain rule, we get
\begin{align*}
\nabla u(\omega, \T (\omega, x)) = J_\T^T(\omega, x) (\nabla u) (\omega, \T (\omega, x)),
\end{align*}
where $J_\T$ denotes the Jacobian of $\T$. Let us define the transformed functions
\begin{align*}
\widehat{a}(\omega, x) := a(\T (\omega,x)), \, \widehat{u}(\omega, x) := u(\omega, \T (\omega, x)), \, \widehat{v}(\omega, x) := v(\T (\omega, x)).
\end{align*}
Therefore, we continue the equation above as
\begin{align*}
\int_{\Dw} a(x) \langle \nabla u(\omega, x) , \nabla v(x) \rangle_2 \, d\lambda^d(x)
	&=
\int_{\Dr} \langle \lvert \operatorname{det}(J_{\T}(\omega, x)) \rvert \widehat{a}(\omega, x) J_\T^{-1}(\omega, x) J_\T^{-T}(\omega, x) \nabla \widehat{u}(\omega, x) , \nabla \widehat{v}(\omega, x) \rangle_2 \, d\lambda^d(x),
\end{align*}
with $J_\T^{-T}(\omega, x) := (J_\T^T(\omega, x))^{-1}$.
Analogously, with $\widehat{f}(\omega, x) := f(\T(\omega, x))$, we get for the right-hand side
\begin{align*}
\int_{\Dw} f(x) v(x) \, dx = \int_{\Dr} \lvert \operatorname{det}(J_{\T}(\omega, x)) \rvert \widehat{f}(\omega, x) \widehat{v}(\omega, x) \, dx.
\end{align*}
Finally, we denote
\begin{align*}
\Ar(\omega, x) := \lvert \operatorname{det}(J_{\T}(\omega, x)) \rvert \widehat{a}(\omega, x) J_\T^{-1}(\omega, x) J_\T^{-T}(\omega, x) , 
\quad
\Fr(\omega, x) := \lvert \operatorname{det}(J_{\T}(\omega, x)) \rvert \widehat{f}(\omega, x).
\end{align*}
\begin{theorem} \label{Thm:EstimateSmallestAndLargestSingularValue}
Let $x \in \DDr, \omega \in \Omega$ and let $\sigma_{\min}(\omega, x), \sigma_{\max}(\omega, x) \in \R$ denote the smallest and largest eigenvalue of $J_\T^{-1}(\omega, x) J_\T^{-T}(\omega, x) \in \mathcal{M}_2^d$. Under Assumption~\ref{Assumption:RD_And_Trafo_C1}, there exist constants $0 < \sigma_- \leq \sigma_+ < \infty$, such that for every $\omega \in \Omega$ and $x \in \DDr$
\begin{align*}
0 < \sigma_- \leq \sigma_{\min}(\omega, x) \leq \sigma_{\max}(\omega, x) \leq \sigma_+ < \infty.
\end{align*}
\end{theorem}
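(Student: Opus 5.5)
The plan is to bound the eigenvalues of the symmetric positive definite matrix $M(\omega,x) := J_\T^{-1}(\omega,x) J_\T^{-T}(\omega,x) = (J_\T^T J_\T)^{-1}(\omega,x)$ through the Frobenius norms of $J_\T$ and $J_\T^{-1}$, which Assumption~\ref{Assumption:RD_And_Trafo_C1} controls uniformly. Since $\T(\omega,\cdot)$ is a $C^1$-diffeomorphism, $J_\T(\omega,x)$ is invertible for every $x \in \DDr$. Hence $M(\omega,x)$ is symmetric positive definite and $\sigma_{\min}(\omega,x) > 0$.

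\textbf{Upper bound.} The eigenvalues of $M = J_\T^{-1} (J_\T^{-1})^T$ coincide with those of $(J_\T^{-1})^T J_\T^{-1}$. These are the squared singular values of $J_\T^{-1}(\omega,x)$. By the Frobenius identity recalled above,
\begin{align*}
\sigma_{\max}(\omega,x) \leq \operatorname{tr}(M(\omega,x)) = \lVert J_\T^{-1}(\omega,x) \rVert_{F_2}^2 .
\end{align*}
Differentiating the identity $\T^{-1}(\omega, \T(\omega,x)) = x$ with the chain rule gives $J_\T^{-1}(\omega,x) = J_{\T^{-1}}(\omega, \T(\omega,x))$. The $C^1$-bound on $\T^{-1}(\omega,\cdot)$ over $\DDw$ then yields $\lVert J_\T^{-1}(\omega,x)\rVert_{F_2} \leq C$, so $\sigma_+ := C^2$ works.

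\textbf{Lower bound.} We have $\sigma_{\min}(M) = 1/\lambda_{\max}(J_\T^T J_\T)$, and
\begin{align*}
\lambda_{\max}(J_\T^T J_\T) \leq \operatorname{tr}(J_\T^T J_\T) = \lVert J_\T(\omega,x) \rVert_{F_2}^2 \leq \lVert \T(\omega,\cdot)\rVert_{C^1(\DDr;\R^d)}^2 \leq C^2 .
\end{align*}
Thus $\sigma_- := C^{-2}$ works. The inequality $\sigma_- \leq \sigma_+$ holds automatically, since $\sigma_{\min} \leq \sigma_{\max}$ at any point; it can also be read off as $C \geq 1$, because $I = J_\T J_\T^{-1}$ gives $\sqrt d \leq \lVert J_\T \rVert_{F_2} \lVert J_\T^{-1}\rVert_{F_2}$.

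\textbf{Main obstacle.} The only delicate point is relating the Jacobian of the inverse map, bounded on $\DDw$ via $\T^{-1}$, to the matrix inverse $J_\T^{-1}(\omega,x)$ at points of $\DDr$. This requires the chain-rule identity above to hold up to the boundary. Since both maps are $C^1$ on the closures and are mutual inverses, I would establish the identity on the interior first and then extend it to the boundary by continuity.
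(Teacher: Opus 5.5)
Your proposal is correct and follows essentially the paper's argument. Both proofs bound $\sum_i \sigma_i = \lVert J_\T^{-1}(\omega,x)\rVert_{F_2}^2$ and $\sum_i \sigma_i^{-1} = \lVert J_\T(\omega,x)\rVert_{F_2}^2$ by $C^2$ using the $C^1$-bounds on $\T^{-1}$ and $\T$; you additionally make the constants explicit ($\sigma_- = C^{-2}$, $\sigma_+ = C^2$) and correctly note that $J_{\T^{-1}}$ must be evaluated at $\T(\omega,x)$, a point the paper's proof glosses over.
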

\begin{proof}
The matrix $J_\T^{-1}(\omega, x) J_\T^{-T}(\omega, x)$ is symmetric and positive definite for every $x \in \DDr, \omega \in \Omega$, and hence the ordered eigenvalues of $J_\T^{-1}(\omega, x) J_\T^{-T}(\omega, x)$  satisfy $0 < \sigma_1(\omega, x) \leq \dots \leq \sigma_d(\omega, x) < \infty$ with $\sigma_{\min}(\omega, x) = \sigma_1(\omega,x)$ and $\sigma_{\max}(\omega, x) = \sigma_d(\omega, x)$.
By the inverse function theorem, we get $J_{\T^{-1}}(\omega, x) = J_\T^{-1}(\omega, x)$, and with Assumption~\ref{Assumption:RD_And_Trafo_C1}
\begin{align*}
\lVert J_{\T^{-1}}(\omega, \cdot) \rVert_{C^0(\DDw; \mathcal{M}_2^d)}
	&=
\max_{x \in \DDr} \lVert J_\T^{-1}(\omega, x) \rVert_{F_2}
= \max_{x \in \DDr} \lVert J_\T^{-T}(\omega, x) \rVert_{F_2}
= \max_{x \in \DDr} \sqrt{\sigma_{1}(\omega, x) + \dots + \sigma_d(\omega, x)}
	\\
	& \leq
\lVert \T^{-1}(\omega, \cdot) \rVert_{C^1(\DDw ; \R^d)} \leq C,
\end{align*}
for every $\omega \in \Omega$. 
Analogously, we get
\begin{align*}
\lVert J_{\T}(\omega, \cdot) \rVert_{C^0(\DDr; \mathcal{M}_2^d)}
= \max_{x \in \DDr} \lVert J_\T(\omega, x) \rVert_{F_2}
= \max_{x \in \DDr} \sqrt{\frac{1}{\sigma_1(\omega, x)} + \dots + \frac{1}{\sigma_d(\omega, x)}} \leq \lVert \T(\omega, \cdot) \rVert_{C^1(\DDr ; \R^d)} \leq C,
\end{align*}
for every $\omega \in \Omega$, since the eigenvalues of $J_\T^T(\omega, x)J_\T(\omega, x)$ are the reciprocals of the eigenvalues of $J_\T^{-1}(\omega, x)J_\T^{-T}(\omega, x) = (J_\T^T(\omega, x)J_\T(\omega, x))^{-1}$. Therefore, there exist constants $\sigma_-, \sigma_+ > 0$ such that
\begin{align*}
0 < \sigma_- \leq \sigma_{\min}(\omega, x) \leq \sigma_{\max}(\omega, x) \leq \sigma_+ < \infty
\end{align*}
for every $x \in \DDr, \omega \in \Omega$.
\end{proof}
\begin{corollary} \label{Cor:EstimateFunctionalDeterminant}
Under Assumption~\ref{Assumption:RD_And_Trafo_C1}, there exist constants $0 < \tau_- \leq \tau_+ < \infty$, such that for every $\omega \in \Omega$ and $x \in \DDr$ 
\begin{align*}
0 < \tau_- \leq \lvert \operatorname{det} J_\T(\omega, x) \rvert \leq \tau_+ < \infty.
\end{align*}
\end{corollary}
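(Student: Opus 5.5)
The plan is to read off the bound on $\lvert \operatorname{det} J_\T(\omega,x)\rvert$ from the eigenvalue bounds of Theorem~\ref{Thm:EstimateSmallestAndLargestSingularValue}, using the identity
\begin{align*}
\operatorname{det}\big(J_\T^{-1}(\omega,x) J_\T^{-T}(\omega,x)\big) = \operatorname{det}\big(J_\T(\omega,x)\big)^{-2}.
\end{align*}
Fix $\omega \in \Omega$ and $x \in \DDr$. Under Assumption~\ref{Assumption:RD_And_Trafo_C1}, $J_\T(\omega,x)$ is invertible, since $\T(\omega,\cdot)$ is a $C^1$-diffeomorphism. Denote the eigenvalues of the symmetric positive definite matrix $J_\T^{-1}J_\T^{-T}$ by $\sigma_1(\omega,x),\dots,\sigma_d(\omega,x)$, as in the proof of Theorem~\ref{Thm:EstimateSmallestAndLargestSingularValue}. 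The determinant of this matrix equals the product of its eigenvalues, so
\begin{align*}
\lvert \operatorname{det} J_\T(\omega,x)\rvert = \Big(\prod_{i=1}^d \sigma_i(\omega,x)\Big)^{-1/2}.
\end{align*}

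Next I would apply Theorem~\ref{Thm:EstimateSmallestAndLargestSingularValue}, which gives $\sigma_- \le \sigma_i(\omega,x) \le \sigma_+$ for every $i$, uniformly in $\omega$ and $x$. Taking the product gives $\sigma_-^d \le \prod_i \sigma_i \le \sigma_+^d$. It follows that
\begin{align*}
\tau_- := \sigma_+^{-d/2} \le \lvert \operatorname{det} J_\T(\omega,x)\rvert \le \sigma_-^{-d/2} =: \tau_+ .
\end{align*}
Both constants are positive and finite because $0<\sigma_-\le\sigma_+<\infty$, and $\tau_- \le \tau_+$ holds automatically.

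There is no genuine obstacle here. The only point needing care is that the constants $\sigma_\pm$ from the theorem are uniform in both $\omega$ and $x$; the theorem states exactly this, and it came from the uniform $C^1$ bound $C$ on $\T$ and $\T^{-1}$.

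As an alternative I would mention a direct route. Hadamard's inequality, or the AM--GM inequality applied to the singular values, gives
\begin{align*}
\lvert\operatorname{det} J_\T\rvert \le \big(\lVert J_\T\rVert_{F_2}^2/d\big)^{d/2} \le (C^2/d)^{d/2}.
\end{align*}
The same argument applied to $J_{\T^{-1}} = J_\T^{-1}$ bounds $\lvert\operatorname{det} J_\T\rvert^{-1}$, which yields the lower bound.
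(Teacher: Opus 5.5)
Your proof is correct and follows the paper's argument. Both write $\lvert \operatorname{det} J_\T(\omega,x)\rvert = \prod_i \sigma_i(\omega,x)^{-1/2}$: the paper does this via the singular values of $J_\T^{-T}$, and you do it via $\operatorname{det}(J_\T^{-1}J_\T^{-T}) = \operatorname{det}(J_\T)^{-2}$. Both then take the same constants $\tau_- = \sigma_+^{-d/2}$ and $\tau_+ = \sigma_-^{-d/2}$ from Theorem~\ref{Thm:EstimateSmallestAndLargestSingularValue}. Your Hadamard/AM--GM aside is also valid; it bypasses that theorem and gives the constants $(d/C^2)^{d/2}$ and $(C^2/d)^{d/2}$ directly in terms of $C$.
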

\begin{proof}
The absolute value of the determinant of a matrix is the product of its singular values. Therefore, let $\sqrt{\sigma_1(\omega, x)}, \dots , \sqrt{\sigma_d(\omega, x)}$ denote the singular values of $J_\T^{-T}(\omega, x)$, then we conclude for $x \in \DDr, \omega \in \Omega$
\begin{align*}
\lvert \operatorname{det} J_\T(\omega, x) \rvert = \frac{1}{\lvert \operatorname{det} J_\T^{-T}(\omega, x) \rvert} = \prod_{i = 1}^d \frac{1}{\sqrt{\sigma_i(\omega, x)}}.
\end{align*}
Finally, by Theorem~\ref{Thm:EstimateSmallestAndLargestSingularValue} we get for every $x \in \DDr, \omega \in \Omega$
\begin{align*}
0 < \sigma_+^{-d/2} \leq \prod_{i = 1}^d \frac{1}{\sqrt{\sigma_{\max}(\omega, x)}} 
\leq \lvert \operatorname{det} J_\T(\omega, x) \rvert
\leq \prod_{i = 1}^d \frac{1}{\sqrt{\sigma_{\min}(\omega, x)}} \leq \sigma_-^{-d/2} < \infty.
\end{align*}
The statement follows with $\tau_- := \sigma_+^{-d/2}, \tau_+ := \sigma_-^{-d/2}$.
\end{proof}
As a direct consequence of Assumption~\ref{Assumption:Bounds_Det_Diffusivity} and Corollary~\ref{Cor:EstimateFunctionalDeterminant}, we get $\Fr(\omega, \cdot) \in L^2(\Dr; \R)$ for every $\omega \in \Omega$. With that, we formulate the transformed problem on the deterministic reference domain.
\begin{problem}[Pathwise weak formulation of the transformed elliptic PDE] \label{Problem:PathwiseWeakTransformedProblem}
For $\omega \in \Omega$, and the transformed forcing term $\Fr (\omega, \cdot) \in  L^2(\Dr; \R)$ find $\widehat{u}(\omega, \cdot) \in H_0^1(\Dr;\R)$, such that 
\begin{align*}
\int_{\Dr} \langle \Ar (\omega, x) \nabla \widehat{u}(\omega, x) , \nabla \widehat{v}(\omega, x) \rangle_2 \, d\lambda^d(x)
= \int_{\Dr} \Fr (\omega, x) \widehat{v}(\omega, x) \, d\lambda^d(x)
\end{align*}
for all transformed $\widehat{v}(\omega, \cdot) \in H_0^1(\Dr ; \R)$.
\end{problem}
Through the transformation, we shifted the stochasticity from the domain into the driving functions of the PDE. The solution space $H_0^1(\Dw;\R)$ of the random domain problem is transported to the solution space $H_0^1(\Dr;\R)$ of the transformed problem. As a result of Corollary~\ref{Cor:EstimateFunctionalDeterminant}, these solution spaces are isomorphic via the isomorphism $\iota : H_0^1(\Dr ; \R) \to H_0^1(\Dw;\R)$ (see also \citep[Lm. 1]{Harbrecht2016}), such that for $\omega \in \Omega, x \in \Dw$ and $u \in  H_0^1(\Dr; \R)$
\begin{align*}
\iota(u)(x) = u(\T^{-1}(\omega, x)).
\end{align*}
Therefore, instead of testing against the transformed functions $\widehat{v}(\omega,\cdot) = v(\T(\omega, \cdot)) \in H_0^1(\Dr;\R)$ in Problem~\ref{Problem:PathwiseWeakTransformedProblem}, we can test against $v \in H_0^1(\Dr;\R)$ independent of $\omega \in \Omega$.
\begin{remark}
Let $\omega \in \Omega$, and let $u(\omega, \cdot) \in H_0^1(\Dw;\R)$ be a solution of the weak random domain problem, Problem~\ref{Problem:PathwiseWeakFormulation_RD_Problem}, and let $\widehat{u}(\omega, \cdot) \in H_0^1(\Dr;\R)$ be a solution of the weak transformed problem, Problem~\ref{Problem:PathwiseWeakTransformedProblem}, then for every $x \in \Dr$ and $\hat{x} \in \Dw$ 
\begin{align*}
\widehat{u}(\omega, x) = u(\omega, \T(\omega, x)), \quad u(\omega, \hat{x}) = \widehat{u}(\omega, \T^{-1}(\omega, \hat{x})).
\end{align*}
\end{remark}
Problem \ref{Problem:PathwiseWeakTransformedProblem} is the weak formulation of an elliptic random PDE on the deterministic reference domain stated as follows.
\begin{problem}[Strong formulation of the transformed elliptic random PDE] \label{Problem:StrongFormTransformedERPDE}
Find the solution $\Ur : \Omega \times \DDr \to \R$, such that
\begin{align*}
- \nabla \cdot (\Ar(\omega, x) \nabla \Ur (\omega, x))
&= \Fr(\omega, x)
&& \hspace*{-2cm}
x \in \Dr, \, \omega \in \Omega,
\\
\Ur(\omega,x) &= 0
&& \hspace*{-2cm}
x \in \partial \Dr, \, \omega \in \Omega.
\end{align*}
\end{problem}
In order to formulate the stochastic weak formulation of the elliptic random PDE, we define the bilinear form $B: \LH \times \LH \to \R$, and the linear form $F: \LH \to \R$, via
\begin{align*}
B(u,v) 
	& :=
\int_\Omega \int_{\Dr} \langle \Ar(\omega, x) \nabla u(\omega, x) , \nabla v(\omega,x) \rangle_2 \, d\lambda^d(x) \, dP(\omega),
	\\
F(v) & :=
\int_{\Omega} \int_{\Dr} \Fr(\omega, x) v(\omega,x) \, d\lambda^d(x) \, dP(\omega).
\end{align*}
Additionally, we note that by Corollary~\ref{Cor:EstimateFunctionalDeterminant} and Assumption~\ref{Assumption:Bounds_Det_Diffusivity}, we get $\Fr \in L^2(\Omega; L^2(\Dr;\R))$, since for $P$-almost every $\omega \in \Omega$
\begin{align*}
\lVert \Fr \rVert_{L^2(\Omega; L^2(\Dr;\R))}^2 
&=
\int_\Omega \int_{\Dr} \lvert \operatorname{det}(J_{\T}(\omega, x)) \rvert^2 \lvert f(\T(\omega, x)) \rvert^2 \, d\lambda^d(x) \, dP(\omega) 
	\\
	& \leq
\tau_+ \int_\Omega \int_{\Dr} \lvert \operatorname{det}(J_{\T}(\omega, x)) \rvert \, \lvert f(\T(\omega, x)) \rvert^2 \, d\lambda^d(x) \, dP(\omega) 
	\\
	& \leq
\tau_+ \int_\Omega \bigg( \int_{\Dw} \lvert f(x) \rvert^2 \, d\lambda^d(x) \bigg) dP(\omega) 
\leq
\tau_+ \int_\Omega \int_{\mathbf{D}} \lvert f(x) \rvert^2 \, d\lambda^d(x) \, dP(\omega) 
	\\
	&=
\tau_+ \int_{\mathbf{D}} \lvert f(x) \rvert^2 \, d\lambda^d(x) < \infty,
\end{align*}
by the change of variable formula and the hold-all domain $\Dw \subset \mathbf{D}$ for every $\omega \in \Omega$.
The corresponding weak formulation is stated as follows.
\begin{problem}[Stochastic weak formulation of the transformed elliptic random PDE] \label{Problem:StochasticWeakForm_TransformedERPDE}
For the transformed forcing term $\Fr \in L^2(\Omega; L^2(\Dr;\R))$ find $\Ur \in \LH$, such that
\begin{align*}
B(\Ur,v) = F(v) \text{ for every } v \in \LH.
\end{align*}
\end{problem}
We refer to \citep{Babuska2004, Babuska2005} for more details on the stochastic formulation of the elliptic random PDE.
\begin{lemma} \label{Lemma:CourantFischer_Estimate_Frobenius_Aref}
Under Assumption~\ref{Assumption:Bounds_Det_Diffusivity} and Assumption~\ref{Assumption:RD_And_Trafo_C1}, there exist constants $0<\underline{\sigma} \leq \overline{\sigma} < \infty$, such that for every $\xi \in  \R^d, \omega \in \Omega$ and $x \in \DDr$
\begin{align*}
\underline{\sigma} \lVert \xi \rVert_2^2 \leq \langle \xi, \Ar(\omega,x) \xi \rangle_2 \leq \overline{\sigma} \lVert \xi \rVert_2^2.
\end{align*}
Furthermore, $\lVert \Ar(\omega, x) \rVert_{F_2} \leq \sqrt{d} \overline{\sigma}$ for every $\omega \in \Omega, x \in \DDr$.
\end{lemma}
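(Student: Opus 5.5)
The plan is to write $\Ar(\omega,x)$ as a product of a positive scalar and a symmetric positive definite matrix, and then bound each factor uniformly using the results already established. Set $S(\omega,x) := J_\T^{-1}(\omega,x) J_\T^{-T}(\omega,x)$. Then
\begin{align*}
\Ar(\omega,x) = \lvert \operatorname{det} J_\T(\omega,x)\rvert \, \widehat{a}(\omega,x) \, S(\omega,x),
\end{align*}
where the scalar prefactor is positive.

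First, I bound the scalar factor. For every $\omega\in\Omega$ we have $\T(\omega,x) \in \DDw \subset \mathbf{D}$. Taking the closure into account (alternatively, using that the bound in Assumption~\ref{Assumption:Bounds_Det_Diffusivity} holds on $\mathbf{D}$ and interpreting it pointwise for the representative), Assumption~\ref{Assumption:Bounds_Det_Diffusivity} gives $a_- \leq \widehat{a}(\omega,x) \leq a_+$. Corollary~\ref{Cor:EstimateFunctionalDeterminant} gives $\tau_- \leq \lvert \operatorname{det} J_\T\rvert \leq \tau_+$. So the prefactor lies in $[\tau_- a_-, \tau_+ a_+]$.

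Second, I bound the matrix factor. $S(\omega,x)$ is symmetric positive definite, so the Rayleigh quotient (Courant--Fischer) gives, for all $\xi\in\R^d$,
\begin{align*}
\sigma_{\min}(\omega,x)\lVert \xi\rVert_2^2 \leq \langle \xi, S(\omega,x)\xi\rangle_2 \leq \sigma_{\max}(\omega,x)\lVert\xi\rVert_2^2 .
\end{align*}
By Theorem~\ref{Thm:EstimateSmallestAndLargestSingularValue} these eigenvalues lie in $[\sigma_-,\sigma_+]$ uniformly in $\omega$ and $x$. Multiplying by the positive scalar factor yields the two-sided bound with
\begin{align*}
\underline{\sigma} := \tau_- a_- \sigma_-, \qquad \overline{\sigma} := \tau_+ a_+ \sigma_+ .
\end{align*}

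For the Frobenius estimate, I use that $\Ar(\omega,x)$ is symmetric. Its eigenvalues $\lambda_1,\dots,\lambda_d$ therefore coincide with its singular values. The first part, applied to unit eigenvectors, shows $0 < \lambda_i \leq \overline{\sigma}$. Hence
\begin{align*}
\lVert \Ar(\omega,x)\rVert_{F_2} = \Big(\sum_{i=1}^d \lambda_i^2\Big)^{1/2} \leq \sqrt{d}\,\overline{\sigma}.
\end{align*}
This uses the singular-value representation of the Frobenius norm recalled earlier in the paper.

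The only delicate point is the pointwise bound on $\widehat{a}$. The function $a$ is merely $L^\infty$, and the bound must hold at points $\T(\omega,x)$ that could lie on $\partial\Dw$. Assumption~\ref{Assumption:Bounds_Det_Diffusivity} is stated for every $x\in\mathbf{D}$, and I would either assume $\DDw\subset\mathbf{D}$ or note that the inequality is only needed for almost every $x$. The latter suffices because $\T(\omega,\cdot)$ is a $C^1$-diffeomorphism and so preserves null sets. Everything else is a direct combination of the two earlier results.
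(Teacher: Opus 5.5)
Your proof is correct and follows the paper's argument essentially step for step. It uses the same factorization $\Ar = \lvert\operatorname{det}J_\T\rvert\,\widehat{a}\,J_\T^{-1}J_\T^{-T}$ and the same constants $\underline{\sigma}=\tau_- a_- \sigma_-$, $\overline{\sigma}=\tau_+ a_+ \sigma_+$ from Corollary~\ref{Cor:EstimateFunctionalDeterminant}, Theorem~\ref{Thm:EstimateSmallestAndLargestSingularValue} and Courant--Fischer, and the same bound $\lVert \Ar\rVert_{F_2}=(\sum_i\lambda_i^2)^{1/2}\le\sqrt{d}\,\overline{\sigma}$; the paper applies Courant--Fischer to $\Ar$ directly rather than to $J_\T^{-1}J_\T^{-T}$, which is immaterial. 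Your remark about evaluating $\widehat{a}$ at points of $\partial\Dw$ addresses something the paper passes over: only your first fix, assuming $\DDw\subset\mathbf{D}$ (which is implicitly needed anyway for $\Ar$ to be defined on all of $\DDr$), gives the bound for every $x\in\DDr$, while the almost-everywhere reading suffices for the later use in $B$.
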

\begin{proof}
By Corollary~\ref{Cor:EstimateFunctionalDeterminant} and Assumption~\ref{Assumption:Bounds_Det_Diffusivity}, we get for every $x \in \DDr, \omega \in \Omega$
\begin{align*}
\lvert \operatorname{det} J_\T(\omega, x) \rvert \geq \tau_- > 0, \quad
\widehat{a}(\omega, x) \geq a_- > 0.
\end{align*}
Therefore $\Ar(\omega, x) = \lvert \operatorname{det} J_\T(\omega, x) \rvert \widehat{a}(\omega, x) J_\T^{-1}(\omega, x) J_\T^{-T} (\omega, x)$ is symmetric, positive definite for every $x \in \DDr, \omega \in \Omega$. Let $\lambda_{\min}(\omega,x), \, \lambda_{\max}(\omega, x) > 0$ denote the smallest and largest eigenvalue of $\Ar(\omega, x)$. Furthermore, let $\sigma_{\min}(\omega, x), \, \sigma_{\max}(\omega, x) > 0$ denote the smallest and largest eigenvalue of $J_\T^{-1}(\omega, x) J_\T^{-T}(\omega, x)$. By Theorem~\ref{Thm:EstimateSmallestAndLargestSingularValue}, we get for every $\omega \in \Omega, x \in \DDr$
\begin{align*}
\lambda_{\min}(\omega, x) 
	&=
\lvert \operatorname{det} J_\T(\omega, x) \rvert \widehat{a}(\omega, x) \sigma_{\min}(\omega, x) 
\geq \tau_- a_- \sigma_- =: \underline{\sigma} > 0,
	\\
\lambda_{\max}(\omega, x)
	&=
\lvert \operatorname{det} J_\T(\omega, x) \rvert \widehat{a}(\omega, x) \sigma_{\max}(\omega, x) 
\leq \tau_+ a_+ \sigma_+ =: \overline{\sigma} < \infty.
\end{align*}
By the Courant-Fischer min-max principle (see e.g.\ \citep[Thm. 4.2.6]{Horn2012}), for every $\xi \in \R^d \setminus \{0\}$ it holds
\begin{align*}
\lambda_{\min}(\omega, x) \leq \frac{\langle \xi, \Ar(\omega, x) \xi \rangle_2}{\langle \xi ,\xi \rangle_2} \leq \lambda_{\max}(\omega, x),
\end{align*}
for every $\omega \in \Omega, x \in \DDr$, which shows the first statement. Since $\Ar(\omega, x)$ is symmetric and positive definite, we get
\begin{align*}
\lVert \Ar(\omega, x) \rVert_{F_2} = \sqrt{\lambda_1^2(\omega, x) + \dots + \lambda_d^2(\omega,x)} \leq \sqrt{d} \lambda_{\max}(\omega, x) \leq \sqrt{d}  \overline{\sigma},
\end{align*}
for every $\omega \in \Omega, x \in \DDr$, where $\lambda_1(\omega, x), \dots, \lambda_d(\omega, x) > 0$ denote the eigenvalues of $\Ar(\omega, x)$. 
\end{proof}
\begin{theorem}
Under Assumption~\ref{Assumption:Bounds_Det_Diffusivity} and Assumption~\ref{Assumption:RD_And_Trafo_C1}, there exists a unique solution $\Ur \in \LH$ of the stochastic weak formulation of the transformed elliptic random PDE, Problem~\ref{Problem:StochasticWeakForm_TransformedERPDE}. 
\end{theorem}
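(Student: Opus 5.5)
The plan is to apply the Lax--Milgram Lemma on the Hilbert space $V := \LH$, equipped with the inner product
\begin{align*}
\langle u, v \rangle_V = \int_\Omega \int_{\Dr} \langle \nabla u(\omega,x), \nabla v(\omega,x)\rangle_2 \, d\lambda^d(x)\, dP(\omega).
\end{align*}
Since $H_0^1(\Dr;\R)$ is a separable Hilbert space, $V$ is a (separable) Hilbert space, as recalled above. We therefore need three properties: boundedness of $B$, coercivity of $B$, and boundedness of $F$ on $V$.

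First, well-definedness and continuity of $B$. By Assumption~\ref{Assumption:RD_And_Trafo_C1}, $\T(\omega,\cdot)$ is $C^1$ and $a$ is measurable, so $\Ar$ is jointly measurable. One point needs care here: $\Ar$ must be measurable in $\omega$. I would take this as implicit in calling $\T$ a random field whose Jacobian is measurable. Lemma~\ref{Lemma:CourantFischer_Estimate_Frobenius_Aref} gives $\lVert \Ar(\omega,x)\xi\rVert_2 \le \overline{\sigma}\lVert\xi\rVert_2$, since the operator norm of a symmetric positive definite matrix is its largest eigenvalue. Cauchy--Schwarz pointwise, then in $L^2(\Omega\times\Dr)$, yields
\begin{align*}
\lvert B(u,v)\rvert \le \overline{\sigma} \lVert u\rVert_V \lVert v\rVert_V .
\end{align*}
Coercivity follows from the lower bound in Lemma~\ref{Lemma:CourantFischer_Estimate_Frobenius_Aref} with $\xi = \nabla u(\omega,x)$, integrated over $\Omega\times\Dr$:
\begin{align*}
B(u,u) \ge \underline{\sigma} \lVert u\rVert_V^2 .
\end{align*}
Both constants are uniform in $(\omega,x)$, which is exactly what the lemma provides.

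Next, boundedness of $F$. We showed above that $\Fr \in L^2(\Omega;L^2(\Dr;\R))$, so Cauchy--Schwarz gives $\lvert F(v)\rvert \le \lVert \Fr\rVert_{L^2(\Omega;L^2(\Dr;\R))} \lVert v\rVert_{L^2(\Omega;L^2(\Dr;\R))}$. The Poincaré inequality on $\Dr$, with a deterministic constant $C_P$ since $\Dr$ is fixed and bounded, applied pathwise and integrated over $\Omega$, gives $\lVert v\rVert_{L^2(\Omega;L^2)} \le C_P \lVert v\rVert_V$. Hence $F \in V'$.

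Lax--Milgram then yields a unique $\Ur \in V$ with $B(\Ur,v)=F(v)$ for all $v\in V$, together with the a priori bound $\lVert \Ur\rVert_V \le C_P \underline{\sigma}^{-1}\lVert \Fr\rVert$. The only real obstacle is the measurability bookkeeping: checking that $\Ar\nabla u$ is a measurable element of $L^2(\Omega\times\Dr;\R^d)$, so that $B$ is defined on $V$. I would handle this via the identification $L^2(\Omega;L^2(\Dr)) \cong L^2(\Omega\times\Dr)$, using the separability and countable generation of the probability space. The analytic estimates themselves are immediate from Lemma~\ref{Lemma:CourantFischer_Estimate_Frobenius_Aref}.
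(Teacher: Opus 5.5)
Your proposal is correct and follows the paper's proof: Lax--Milgram on $\LH$, with continuity and coercivity of $B$ taken from Lemma~\ref{Lemma:CourantFischer_Estimate_Frobenius_Aref}, and boundedness of $F$ obtained from Cauchy--Schwarz and the Poincaré inequality. The differences are minor: you bound $\Ar$ in operator norm, which gives continuity constant $\overline{\sigma}$ instead of the paper's Frobenius-based $\sqrt{d}\,\overline{\sigma}$, and you explicitly flag the measurability of $\Ar\nabla u$, which the paper takes for granted.
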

\begin{proof}
For the existence and uniqueness, we show continuity and coercivity of the bilinear form $B:\LH \times \LH \to \R$. For that, we use Lemma~\ref{Lemma:CourantFischer_Estimate_Frobenius_Aref}, as well as that the Frobenius norm is consistent with the Euclidean vector norm, i.e.\ for every $A \in \mathcal{M}_2^d, \xi \in \R^d$, it holds $\lVert A\xi \rVert_2 \leq \lVert A \rVert_{F_2} \lVert \xi \rVert_2$.
\begin{align*}
\lvert B(u,v) \rvert 
	& \leq 
\int_\Omega \int_{\Dr} 
\lvert \langle \Ar(\omega, x) \nabla u(\omega, x), \nabla v(\omega, x) \rangle_2 \rvert \, \dd 
	\\
	& \leq
\int_\Omega \int_{\Dr} 
\lVert \Ar(\omega, x) \rVert_{F_2} \lVert \nabla u(\omega, x) \rVert_2 \lVert \nabla v(\omega, x) \rVert_2 \, \dd 
	\\
	& \leq
\sqrt{d} \overline{\sigma} \int_\Omega \int_{\Dr} 
\lVert \nabla u(\omega, x) \rVert_2 \lVert \nabla v(\omega, x) \rVert_2 \, \dd 
	\\
	& \leq
\sqrt{d} \overline{\sigma} \left(
\int_\Omega \int_{\Dr} 
\lVert \nabla u(\omega, x) \rVert_2^2 \, \dd \right)^{1/2}
\left(
\int_\Omega \int_{\Dr} 
\lVert \nabla v(\omega, x) \rVert_2^2 \, \dd \right)^{1/2}
 \\
 &=
\sqrt{d} \overline{\sigma} \lVert u \rVert_{\LH} \lVert v \rVert_{\LH},
	\\
B(u,u) 
	& =
\int_\Omega \int_{\Dr} 
 \langle \Ar(\omega, x) \nabla u(\omega, x), \nabla u(\omega, x) \rangle_2 \, \dd 
	\\	
	& = 
\int_\Omega \int_{\Dr} 
 \langle  \nabla u(\omega, x), \Ar(\omega, x) \nabla u(\omega, x) \rangle_2 \, \dd 
	\\ 	
 	& \geq
\int_\Omega \int_{\Dr} \underline{\sigma} \lVert \nabla u(\omega, x) \rVert_2^2 \, \dd
= \underline{\sigma} \lVert u \rVert_{\LH}^2.
\end{align*}
Finally we establish the continuity of the linear form $F: \LH \to \R$
\begin{align*}
\lvert F(v) \rvert 
	& \leq 
\int_\Omega \int_{\Dr} \lvert \Fr(\omega, x) v(\omega, x) \rvert \, \dd
	\\	
	& \leq 
\int_\Omega 
\left( \int_{\Dr} \lvert \Fr(\omega, x) \rvert^2 \, d\lambda^d(x) \right)^{1/2}
\left( \int_{\Dr} \lvert v(\omega, x) \rvert^2 \, d\lambda^d(x) \right)^{1/2}
\, dP(\omega)
	\\
	& \leq
C_P \lVert \Fr \rVert_{L^2(\Omega; L^2(\Dr; \R))} \lVert v \rVert_{\LH},
\end{align*}
where $C_P > 0$ denotes the Poincaré constant. By the Lax-Milgram Lemma we get the existence and uniqueness of the solution $\Ur \in \LH$.
\end{proof}

\section{Spectral expansion methods} \label{Section:SpectralExpansion}

\subsection{Polynomial chaos expansion}
We briefly recall the notation for polynomial chaos expansions used throughout this work, and refer to 
\citep{Karniadakis2005, Xiu2002, XiuWienerAskey, Ernst2011, Musco2024} for a detailed discussion. 
Let $(Y_n)_{n = 1, \dots , N}$, for $N \in \N$, denote independent random variables, where $Y_n : \Omega \to \Gamma_n$ is defined on the complete and countably generated probability space $(\Omega, \mathcal{A}, P)$ and image measurable space $(\Gamma_n ,\Sigma_n)$ for Borel subsets $\Gamma_n \subset \R$. For $n = 1, \dots , N$ let $\mu_n := P \circ Y_n^{-1}: \Sigma_n \to [0,1]$ denote the law of $Y_n$.
Furthermore, let $\Gamma := \varprod_{n = 1}^N \Gamma_n$ denote the product space, endowed with the corresponding product $\sigma$-algebra $\Sigma := \bigotimes_{n= 1}^N \Sigma_n$. Then the $\mathcal{A}-\Sigma$-measurable map $Y: \Omega \to \Gamma, \, \omega \mapsto (Y_1(\omega),  \dots , Y_N(\omega))$ has the law $\mu = \bigotimes_{n = 1}^N \mu_n$.
Under suitable assumptions (see e.g.\ \citep{Ernst2011}), there exists an orthonormal polynomial basis, denoted by $(p_{n,i})_{i \in \N_0}$, for $n = 1, \dots, N$ of $L^2(\Gamma_n; \R)$ with respect to the measure $\mu_n$. 
Define the set of index sequences $\mathcal{I}_N := \{ \nu = (\nu_1, \dots , \nu_N)  \in \mathbb{N}_0^N \, : \, \nu_i \in \mathbb{N}_0 \}$, and for every index sequence $\nu \in \mathcal{I}_N$ a corresponding tensor product function
\begin{align*}
p_\nu := \bigotimes_{n=1}^N p_{n, \nu_n}: \Gamma \to \mathbb{R}, \quad y=(y_1, \dots , y_N) \mapsto \prod_{n = 1}^N p_{n, \nu_n}(y_n).
\end{align*}
For every $\nu \in \mathcal{I}_N$, the tensor product function $p_\nu$ is a multivariate polynomial of total degree 
$\lvert \nu \rvert := \nu_1 + \dots + \nu_N$. The tensor product polynomials $(p_\nu)_{\nu \in \mathcal{I}_N}$ form an orthonormal polynomial basis of $L^2(\Gamma; \R)$. We enumerate this basis by the graded lexicographic ordering and denote it by $(p_k)_{k \in \N_0}$. For a maximal total polynomial degree $P \in \N_0$, we consider the truncated index set
\begin{align*}
\mathcal{I}_{N, P} = \{ \nu \in \mathcal{I}_N \, : \, \lvert \nu \rvert = \nu_1 + \dots + \nu_N \leq P \}.
\end{align*}
The corresponding truncated polynomial chaos space has the dimension
\begin{align*}
M + 1 = \lvert \mathcal{I}_{N, P} \rvert = \binom{N + P}{P}.
\end{align*}
The polynomial chaos expansion of $\R$-valued random variables can be extended to certain random fields (see also \citep[Rem.\ 3.12]{Ernst2011}). Let $(H, \langle \cdot , \cdot \rangle_H)$ be a separable Hilbert space and let $X \in L^2(\Omega;H)$ be a strongly measurable random field over the probability space $(\Omega, \sigma(Y), P)$. Let $(p_k(Y))_{k \in \N_0}$ denote the polynomial chaos basis of $L^2(\Omega; \R)$, then the random field $X: \Omega \to H$ admits the PC expansion
\begin{align*}
X(\omega) = \sum_{k \in \N_0} x_k \, p_k(Y(\omega)),
\end{align*}
with convergence in $L^2(\Omega;H)$, and spectral coefficients 
\begin{align*}
x_k = \int_\Omega X(\omega) p_k(Y(\omega)) \, dP(\omega) \in H.
\end{align*}
Its truncation to a maximal total polynomial degree $P \in \N_0$ is given by
\begin{align*}
X^{(M)}(\omega) = \sum_{k = 0}^M x_k p_k(Y(\omega)), \quad \text{ for } M + 1 = \frac{(N + P)!}{N!P!}.
\end{align*}
Due to the construction of the polynomial basis in the image space of the underlying random variables, we can represent the random fields by deterministic functions, called deterministic representation
\begin{align*}
\bar{X}(y) = \sum_{k \in \N_0} x_k p_k(y), \quad y \in \Gamma,
\end{align*}
with convergence in $L^2(\Gamma; H)$, such that for $P$-almost $\omega \in \Omega$
\begin{align*}
X(\omega) = \bar{X}(Y(\omega)).
\end{align*}

\subsection{Stochastic Galerkin method}
In the first step, we expand the transformed matrix diffusion field $\Ar: \Omega \times \Dr \to \mathcal{M}_2^d$ and transformed forcing term $\Fr: \Omega \times \Dr \to \R$ into their respective polynomial chaos expansions. This section is mainly based on the works of Xiu and Tartakovsky \citep{Xiu2006} and Kundu et al.\ \citep{Kundu2014} for stochastic Galerkin methods for random domain problems. The structure of the section is based on \citep[Ch. 3.2]{Musco2024}.
We focus on the strong formulation of the transformed elliptic random PDE, Problem~\ref{Problem:StrongFormTransformedERPDE}, and the stochastic weak formulation of the transformed elliptic random PDE, Problem~\ref{Problem:StochasticWeakForm_TransformedERPDE}. The polynomial chaos expansions used for this are truncated at a maximal polynomial degree $P \in \N_0$. 

As a direct consequence of Lemma~\ref{Lemma:CourantFischer_Estimate_Frobenius_Aref}, we get
\begin{align*}
\int_\Omega \int_{\Dr} \lVert \Ar(\omega, x) \rVert_{F_2}^2 \, \dd
\leq d \overline{\sigma}^2 \lambda^d(\Dr) < \infty,
\end{align*}
and hence $\Ar \in L^2(\Omega; L^2(\Dr; \mathcal{M}_2^d))$. Therefore, Assumption~\ref{Assumption:RD_And_Trafo_C1} suffices to expand the random fields of the transformed stochastic weak formulation, Problem~\ref{Problem:StochasticWeakForm_TransformedERPDE}, into their respective polynomial chaos expansions. For the strong formulation of the transformed PDE, Problem~\ref{Problem:StrongFormTransformedERPDE}, we have to place additional assumptions on the random domains and the diffusion coefficient in order for the strong residual to be meaningful. 
\begin{assumption} \label{Assumption:AdditionalAssmStrongResidual}
In addition to Assumption~\ref{Assumption:Bounds_Det_Diffusivity} and \ref{Assumption:RD_And_Trafo_C1}, we assume that there is a uniform bound $C > 0$, such that
\begin{align*}
\lVert \T(\omega, \cdot) \rVert_{C^2(\DDr; \R^d)} \leq C, \quad \lVert \T^{-1}(\omega, \cdot) \rVert_{C^2(\DDw; \R^d)} \leq C
\end{align*}
for every $\omega \in \Omega$. Furthermore, we assume that the diffusion coefficient $a: \mathbf{D} \to \R$ is differentiable on $\mathbf{D}$, and $\nabla a \in L^{\infty}(\mathbf{D}; \R^d)$.
\end{assumption}
\begin{theorem} \label{Theorem:TransformedMatrixDiffusivityInH1}
Under Assumption~\ref{Assumption:AdditionalAssmStrongResidual}, the transformed matrix diffusion field satisfies $\Ar \in L^2(\Omega; H^1(\Dr; \mathcal{M}_2^d))$.
\end{theorem}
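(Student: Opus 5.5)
We already know $\Ar \in L^2(\Omega; L^2(\Dr;\mathcal{M}_2^d))$ with the pointwise bound $\lVert \Ar(\omega,x)\rVert_{F_2} \leq \sqrt{d}\,\overline{\sigma}$ from Lemma~\ref{Lemma:CourantFischer_Estimate_Frobenius_Aref}. The plan is therefore to show a uniform pathwise bound on the first derivatives: there is a constant $K$, independent of $\omega$ and $x$, with $\lVert \partial_{x_j} \Ar(\omega,x)\rVert_{F_2} \leq K$ for all $j=1,\dots,d$. Integrating over $\Dr$ then gives $\lVert \Ar(\omega,\cdot)\rVert_{H^1(\Dr;\mathcal{M}_2^d)}^2 \leq \lambda^d(\Dr)\,(d\overline{\sigma}^2 + dK^2)$ uniformly in $\omega$, and integrating over $\Omega$ yields the claim. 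Strong measurability in $H^1$ is inherited from the measurability of $\T$, as was already implicitly used for the $L^2$ statement. I would remark on this but not belabour it.

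For the derivative bound, I write $\Ar = g\,\widehat{a}\,G$ with $g := \lvert\operatorname{det} J_\T\rvert$ and $G := J_\T^{-1}J_\T^{-T}$, and apply the product rule:
\begin{align*}
\partial_j \Ar = (\partial_j g)\,\widehat{a}\,G + g\,(\partial_j \widehat{a})\,G + g\,\widehat{a}\,\partial_j G .
\end{align*}
The undifferentiated factors are uniformly bounded:
\begin{itemize}
\item $g \leq \tau_+$ by Corollary~\ref{Cor:EstimateFunctionalDeterminant};
\item $\widehat{a} \leq a_+$ by Assumption~\ref{Assumption:Bounds_Det_Diffusivity};
\item $\lVert G\rVert_{F_2} \leq \sqrt{d}\,\sigma_+$ by Theorem~\ref{Thm:EstimateSmallestAndLargestSingularValue}.
\end{itemize}
The derivative factors are handled one at a time.
\begin{itemize}
\item By the chain rule, $\nabla\widehat{a}(\omega,x) = J_\T^T(\omega,x)(\nabla a)(\T(\omega,x))$, so $\lVert\nabla \widehat{a}\rVert_2 \leq C\lVert \nabla a\rVert_{L^\infty}$ by Assumptions~\ref{Assumption:RD_And_Trafo_C1} and \ref{Assumption:AdditionalAssmStrongResidual}.
\item Since $J_\T$ is continuous and never singular on the connected set $\DDr$, $\det J_\T$ has constant sign, so $g = \pm \det J_\T$ is $C^1$. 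Jacobi's formula gives $\partial_j g = g\,\operatorname{tr}(J_\T^{-1}\partial_j J_\T)$. Here $\lVert J_\T^{-1}\rVert_{F_2} \leq C$ (as in the proof of Theorem~\ref{Thm:EstimateSmallestAndLargestSingularValue}), and $\lVert\partial_j J_\T\rVert_{F_2} \leq \lVert D^2\T\rVert_{F_3} \leq C$ by the $C^2$ bound of Assumption~\ref{Assumption:AdditionalAssmStrongResidual}.
\item For $G$, I use $\partial_j (J_\T^{-1}) = -J_\T^{-1}(\partial_j J_\T)J_\T^{-1}$. Then
\begin{align*}
\partial_j G = -J_\T^{-1}(\partial_j J_\T)J_\T^{-1}J_\T^{-T} - J_\T^{-1}J_\T^{-T}(\partial_j J_\T)^T J_\T^{-T},
\end{align*}
whose Frobenius norm is bounded by $2C^4$ by submultiplicativity of $\lVert\cdot\rVert_{F_2}$.
\end{itemize}
Collecting terms gives the constant $K$. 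Note that the $C^2$ bound on $\T^{-1}$ is not needed here; it is only needed to bound the inverse Jacobian.

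The main, though modest, obstacle is this differentiability bookkeeping: justifying that $\lvert\det J_\T\rvert$ is differentiable, and that the classical derivatives obtained this way coincide with weak derivatives. Both follow from $\T(\omega,\cdot)\in C^2$ and $a\in C^1$ with bounded gradient, which makes $\Ar(\omega,\cdot)$ a $C^1$ function with bounded derivatives on the bounded domain $\Dr$; such a function lies in $H^1$ with weak derivative equal to the classical one.
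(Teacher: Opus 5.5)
Your proof is correct and follows essentially the same route as the paper: expand $\partial_{x_k}\Ar$ by the product rule applied to $|\det J_\T|\,\widehat{a}\,J_\T^{-1}J_\T^{-T}$, bound the undifferentiated factors via Corollary~\ref{Cor:EstimateFunctionalDeterminant}, Theorem~\ref{Thm:EstimateSmallestAndLargestSingularValue} and Assumption~\ref{Assumption:Bounds_Det_Diffusivity}, bound the differentiated factors uniformly using the $C^2$ assumptions, and integrate. You supply details the paper only asserts: Jacobi's formula, the chain rule for $\widehat{a}$, and $\partial_k J_\T^{-1} = -J_\T^{-1}(\partial_k J_\T)J_\T^{-1}$, which correctly shows that the $C^2$ bound on $\T^{-1}$ is not needed. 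The one slip is that the paper only assumes $a$ differentiable with $\nabla a \in L^\infty(\mathbf{D};\R^d)$, not $a \in C^1$, so your closing weak-derivative remark should rely on $\Ar(\omega,\cdot)$ being differentiable with bounded derivative, hence locally Lipschitz, rather than on $C^1$ regularity.
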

\begin{proof}
We show that
\begin{align*}
\lVert \Ar \rVert_{L^2(\Omega; H^1(\Dr; \mathcal{M}_2^d))}^2
&= \int_\Omega \int_{\Dr} \lVert \Ar(\omega, x) \rVert_{F_2}^2 
	+ \lVert D^1 \Ar (\omega, x) \rVert_{F_3}^2 \, \dd 
	\\
	& 
\leq d \overline{\sigma}^2 \lambda^d(\Dr) + \int_\Omega \int_{\Dr} \lVert D^1 \Ar(\omega, x) \rVert_{F_3}^2 \, \dd 
\end{align*}
is finite. For that it suffices to show that $\lVert D^1 \Ar(\omega, x) \rVert_{F_3}^2$ is uniformly bounded for every $\omega \in \Omega, x \in \DDr$. The derivative $D^1 \Ar(\omega, x) \in \mathcal{M}_3^d$ is a rank three tensor with entries
\begin{align*}
\big(D^1 \Ar(\omega, x) \big)_{ijk} 
	&=
\frac{\partial}{\partial x_k} \big(\Ar(\omega, x)\big)_{ij}
	\\
	&=
\Big(\frac{\partial}{\partial x_k} \big( \lvert \operatorname{det} J_\T (\omega, x) \rvert \widehat{a}(\omega, x) \big) \Big) \big( J_\T^{-1}(\omega, x) J_\T^{-T}(\omega, x) \big)_{ij}
	\\
	& \hspace*{0.5cm}
+ \lvert \operatorname{det} J_\T (\omega, x) \rvert \widehat{a}(\omega, x) \frac{\partial}{\partial x_k} \big( J_\T^{-1}(\omega, x) J_\T^{-T}(\omega, x) \big)_{ij},
\end{align*}
for $i,j,k = 1, \dots , d$. By Assumption~\ref{Assumption:AdditionalAssmStrongResidual}, the factor
\begin{align*}
\big| \frac{\partial}{\partial x_k} \bigg( \lvert \operatorname{det} J_\T(\omega, x) \rvert \widehat{a}(\omega, x) \big) \bigg| \leq C_1
\end{align*}
is bounded by a constant $C_1 > 0$ for every $k = 1, \dots , d, \, \omega \in \Omega, x \in \DDr$.
Therefore, we get
\begin{align*}
& \lVert D^1 \Ar(\omega, x) \rVert_{F_3}^2 
	\\ 
	&
= \sum_{i,j,k = 1}^d 
\bigg( 
\Big( \frac{\partial}{\partial x_k} \big( \lvert \operatorname{det} J_\T (\omega, x) \rvert \widehat{a}(\omega, x) \big) \Big) \big( J_\T^{-1}(\omega, x) J_\T^{-T}(\omega, x) \big)_{ij}
+ \lvert \operatorname{det} J_\T (\omega, x) \rvert \widehat{a}(\omega, x) \frac{\partial}{\partial x_k} \big( J_\T^{-1}(\omega, x) J_\T^{-T}(\omega, x) \big)_{ij}
\bigg)^2
	\\
	& 
\leq 2 d C_1^2 \lVert J_\T^{-1}(\omega, x) J_\T^{-T}(\omega, x) \rVert_{F_2}^2
+ 2 (\tau_+ a_+)^2 \sum_{i,j,k = 1}^d 
\bigg( \frac{\partial}{\partial x_k} \big( J_\T^{-1}(\omega, x) J_\T^{-T}(\omega, x) \big)_{ij} \bigg)^2 
	\\
	&
\leq 2 d^2 C_1^2 \sigma_+^2 
+ 2 (\tau_+ a_+)^2 \sum_{i,j,k = 1}^d 
\bigg( \frac{\partial}{\partial x_k} \big( J_\T^{-1}(\omega, x) J_\T^{-T}(\omega, x) \big)_{ij} \bigg)^2,
\end{align*}
for every $\omega \in \Omega, x \in \DDr$, where the uniform bound of the first term stems from Theorem~\ref{Thm:EstimateSmallestAndLargestSingularValue}. Now we consider the second term
\begin{align*}
&\sum_{i,j,k = 1}^d 
\bigg( \frac{\partial}{\partial x_k} \big( J_\T^{-1}(\omega, x) J_\T^{-T}(\omega, x) \big)_{ij} \bigg)^2
	=
\sum_{i,j,k = 1}^d 
\bigg( \frac{\partial}{\partial x_k} 
\sum_{l = 1}^d \big( J_\T^{-1}(\omega, x) \big)_{il} \big( J_\T^{-T}(\omega, x) \big)_{lj} 
 \bigg)^2
 	\\
 	&=
\sum_{i,j,k = 1}^d 
\bigg(
	\sum_{l = 1}^d \big( D^1 J_\T^{-1}(\omega, x) \big)_{ilk} \big( J_\T^{-T}(\omega, x) \big)_{lj} 
	+ \sum_{l = 1}^d \big( J_\T^{-1}(\omega, x) \big)_{il} \big( D^1 J_\T^{-T} (\omega, x) \big)_{ljk} 
\bigg)^2
	\\
	& \leq
2 \sum_{i,j,k = 1}^d
\bigg(
	\sum_{l = 1}^d \big( D^1 J_\T^{-1} (\omega, x) \big)_{ilk}^2
	\bigg)
\bigg(
	\sum_{l = 1}^d \big( J_\T^{-T}(\omega, x) \big)_{lj}^2
	\bigg)
+ 2 \sum_{i,j,k = 1}^d
\bigg(
	\sum_{l = 1}^d \big( J_\T^{-1}(\omega, x)\big)_{il}^2
	\bigg)
\bigg(
	\sum_{l = 1}^d \big( D^1 J_\T^{-T}  (\omega, x) \big)_{ljk}^2
	\bigg)
	\\
	&=
2 \big\lVert D^1 J_\T^{-1}  (\omega, x) \big\rVert_{F_3}^2 \big\lVert J_\T^{-T}(\omega, x) \big\rVert_{F_2}^2
	+ 2 \big\lVert J_\T^{-1}(\omega, x) \big\rVert_{F_2}^2 \big\lVert D^1 J_\T^{-T}(\omega, x) \big\rVert_{F_3}^2 \leq C_2,
\end{align*}
which is uniformly bounded by a constant $C_2 > 0$ for every $\omega \in \Omega, x \in \DDr$ by Assumption~\ref{Assumption:RD_And_Trafo_C1} and \ref{Assumption:AdditionalAssmStrongResidual}.
\end{proof}
Let $Y = (Y_1, \dots, Y_N): \Omega \to \Gamma$, $\Gamma \subset \R^N$ denote the multivariate driving random variable and consider the probability space $(\Omega, \sigma(Y), P)$.
 Furthermore, let $(p_k(Y))_{k \in \N_0}$ denote the polynomial chaos basis of $L^2(\Omega;\R)$ and accordingly let $(p_k)_{k \in \N_0}$ denote the polynomial chaos basis of $L^2(\Gamma; \R)$. 
The deterministic representation of the transformed matrix diffusion coefficient and the transformed forcing term read
\begin{align*}
\bar{A}_{\text{ref}}(y, x) = \sum_{k \in \N_0} A_k(x) \, p_k(y), \quad
\bar{f}_{\text{ref}}(y, x) = \sum_{k \in \N_0} f_k(x) \, p_k(y), \quad y \in \Gamma, \, x \in \Dr,
\end{align*}
where the spectral coefficients satisfy $A_k \in H^1(\Dr; \mathcal{M}_2^d)$ for the strong formulation of the transformed PDE, or $A_k \in L^2(\Dr; \mathcal{M}_2^d)$ for the stochastic weak formulation of the transformed PDE, and $f_k \in L^2(\Dr;\R)$ for every $k \in \N_0$. 
With these assumptions, and by the Doob-Dynkin Lemma (see \citep[Lem. 1.14]{Kallenberg2021}), the stochasticity of the solution $\Ur :\Omega \times \DDr \to \R$ of the transformed elliptic random PDE is a function of $Y: \Omega \to \Gamma$. 
We choose $L^2(\Omega; H_0^1(\Dr; \R) \cap H^2(\Dr; \R))$ as solution space for the strong form and $L^2(\Omega; H_0^1(\Dr; \R))$ as solution space for the weak form, and obtain the deterministic representation $\bar{u}_{\text{ref}}: \Gamma \times \DDr \to \R$ of the unknown solution with respect to  $(p_k)_{k \in \N_0}$, as
\begin{align*}
\bar{u}_{\text{ref}}(y, x) = \sum_{k \in \N_0} u_k(x) \, p_k(y), \quad y \in \Gamma, \, x \in \DDr,
\end{align*}
where $u_k \in H_0^1(\Dr; \R) \cap H^2(\Dr; \R)$ or $u_k \in H_0^1(\Dr; \R)$ for every $k \in \N_0$.
For the stochastic Galerkin approximation, the deterministic representations are truncated at a maximal polynomial degree of $P \in \N_0$. 
This results in the solution space 
\begin{align*}
L^{2;(M)}(\Gamma; H(\Dr;\R)) \cong L^{2;(M)}(\Gamma; \R) \otimes H(\Dr; \R),
\end{align*}
where $H(\Dr; \R) \in \{H_0^1(\Dr; \R), H_0^1(\Dr; \R) \cap H^2(\Dr; \R) \}$ for the respective formulations, and the finite dimensional space $L^{2;(M)}(\Gamma;\R)$ is spanned by the first $M + 1= \frac{(N + P)!}{N! P!}$ basis polynomials $(p_k)_{k \in \{ 0, \dots , M \} }$ of $L^2(\Gamma; \R)$.
The stochastic Galerkin method, in the strong-residual form, seeks to find an approximation $\tur: \Gamma \times \DDr \to \R$, such that the truncated, strong residual $\mathcal{R}_{\tur} : \Gamma \times \Dr \to \R$, defined by
\begin{align*}
\mathcal{R}_{\tur}(y,x) := \nabla \cdot \left( \tar (y,x) \nabla \tur (y,x) \right) + \tfr (y,x),
\end{align*}
is orthogonal to the space $L^{2;(M)}(\Gamma; \R)$ for $\lambda^d$-almost every $x \in \Dr$. Here,
\begin{align*}
\tar (y,x):= \sum_{k = 0}^M A_k(x) \, p_k(y) , \quad \tfr (y,x) := \sum_{k = 0}^M f_k(x) \, p_k(y)
\end{align*}
denote the truncated deterministic representation of the transformed matrix diffusion coefficient and transformed forcing term. 
The orthogonality is achieved by a Galerkin projection, which results in a system of $M + 1$, usually coupled, deterministic PDEs.
\begin{problem}[Strong form stochastic Galerkin approximation of the transformed problem] \label{Problem:StrongFormSGA}
For the truncated deterministic representation $\tfr \in L^{2;(M)}(\Gamma; L^2(\Dr; \R))$ of a transformed given forcing term, find the spectral coefficients $u_0, \dots , u_M \in H_0^1(\Dr; \R) \cap H^2(\Dr; \R)$ of $\tur (y,x) = \sum_{i = 0}^M u_i(x) \, p_i(y) \in L^{2;(M)}(\Gamma; H_0^1(\Dr; \R) \cap H^2(\Dr; \R))$ such that for every $ k = 0, \dots, M$
\begin{align*}
\big\langle \mathcal{R}_{\tur}(\cdot, x) \, , \, p_k \big\rangle_{L^2(\Gamma;\R)}
	&=
0 
	\qquad
 x \in \Dr, 
 	\\
\big\langle \tur (\cdot, x) \, , \, p_k \big\rangle_{L^2(\Gamma;\R)} 
	&=
0
	\qquad
 x \in \partial \Dr.
\end{align*}  
\end{problem}
For the stochastic Galerkin approximation of the weak formulation of the transformed elliptic random PDE, we define the truncated deterministic representation of the bilinear and linear form. By using the truncated deterministic representation of the involved fields, let
\begin{align*}
& \tB: \tLH \times \tLH \to \R,
	\\
	& \hspace*{0.9cm}
(u, v) \mapsto 
\int_\Gamma \int_{\Dr} \langle \tar (y,x) \, \nabla u (y,x) , \nabla v (y,x) \rangle_2 \, d \lambda^d(x) \, d\mu(y),
	\\
	&
\tF : \tLH \to \R,
	\\
	& \hspace*{0.9cm}
v \mapsto 
\int_\Gamma \int_{\Dr} \tfr (y, x) v (y, x)\, d\lambda^d(x) \, d\mu(y).
\end{align*}
\begin{problem}[Weak form stochastic Galerkin approximation of the transformed problem]
\label{Problem:WeakFormSGA}
For the truncated deterministic representation $\tfr \in L^{2;(M)}(\Gamma; L^2(\Dr; \R))$ of a transformed given forcing term, find the spectral coefficients $u_0, \dots , u_M \in H_0^1(\Dr; \R)$ of $\tur (y,x) = \sum_{i = 0}^M u_i(x) \, p_i(y) \in L^{2;(M)}(\Gamma; H_0^1(\Dr; \R))$ such that
\begin{align*}
\tB (\tur, v) = \tF (v) \text{ for all } v \in \tLH .
\end{align*}
\end{problem}

\begin{theorem} \label{Thm:RitzEnergyFunction}
There exists a unique solution $u^*\in \tLH$ of Problem \ref{Problem:WeakFormSGA}. Furthermore, the Ritz energy functional $\tE: \tLH \to \R$, defined by
\begin{align*}
\tE(u) := \frac{1}{2} \tB (u,u) - \tF (u),
\end{align*}
has a unique minimum. The corresponding minimizer 
\begin{align*}
u^* = \underset{u \in \tLH}{\operatorname{arg \, min}} \tE (u)
\end{align*}
is the unique stochastic Galerkin approximation of Problem \ref{Problem:StochasticWeakForm_TransformedERPDE}, i.e.\ the solution of Problem \ref{Problem:WeakFormSGA}.
\end{theorem}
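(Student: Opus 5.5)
The plan is to treat $\tLH$ as a Hilbert space, apply the Lax--Milgram Lemma to the pair $(\tB,\tF)$, and then use the symmetry of $\tB$ to identify the solution with the unique minimizer of $\tE$. First, $\tLH \cong L^{2;(M)}(\Gamma;\R)\otimes H_0^1(\Dr;\R)$ is a closed subspace of $L^2(\Gamma;H_0^1(\Dr;\R))$. It is isometric to $(H_0^1(\Dr;\R))^{M+1}$ via the spectral coefficients $(u_0,\dots,u_M)$, by orthonormality of $(p_k)$, and is therefore itself a Hilbert space. Continuity of $\tF$ follows exactly as in the proof for Problem~\ref{Problem:StochasticWeakForm_TransformedERPDE}: apply Cauchy--Schwarz and the Poincaré inequality, using $\tfr\in L^{2;(M)}(\Gamma;L^2(\Dr;\R))$.

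Symmetry of $\tB$ comes from $\Ar(\omega,x)$ being symmetric. Each coefficient $A_k(x)=\int_\Omega \Ar(\omega,x)p_k(Y(\omega))\,dP(\omega)$ is an integral of symmetric matrices, so $\tar(y,x)$ is symmetric. For continuity, I bound $\lVert \tar(y,x)\rVert_{F_2}$ uniformly. Each $A_k$ is bounded in $L^\infty$ by $\sqrt d\,\overline{\sigma}\,\lVert p_k\rVert_{L^1(\Gamma)}$, using Lemma~\ref{Lemma:CourantFischer_Estimate_Frobenius_Aref}. Controlling the $y$-dependence also needs the finitely many polynomials $p_0,\dots,p_M$ to be bounded on $\Gamma$, which holds if $\Gamma$ is bounded; otherwise I would argue directly via the finite-dimensional coefficient representation. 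The Cauchy--Schwarz chain from the earlier existence theorem then gives $|\tB(u,v)|\le C\lVert u\rVert\lVert v\rVert$.

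Coercivity is the main obstacle. The truncated field $\tar$ need not inherit the lower bound $\underline{\sigma}$ of Lemma~\ref{Lemma:CourantFischer_Estimate_Frobenius_Aref}. The term $\langle(\Ar-\tar)\nabla u,\nabla u\rangle$ involves $\nabla u\otimes\nabla u$, which has polynomial degree up to $2P$, so the neglected modes of $\Ar$ do not drop out of $\tB(u,u)$. I would therefore first try to work under the (implicitly needed) condition that $\tar$ is uniformly positive definite. Concretely, I would require $\sup_{y,x}\lVert \Ar-\tar\rVert_{F_2}\le \underline{\sigma}/2$, which holds for $P$ large when the expansion of $\Ar$ converges uniformly. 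This gives $\tB(u,u)\ge \tfrac{\underline{\sigma}}{2}\lVert u\rVert^2$, and Lax--Milgram then yields a unique $u^*\in\tLH$ solving Problem~\ref{Problem:WeakFormSGA}.

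For the energy statement, fix $u^*$ and any $w\in\tLH$. By bilinearity and symmetry,
\begin{align*}
\tE(u^*+w)=\tE(u^*)+\big(\tB(u^*,w)-\tF(w)\big)+\tfrac12\tB(w,w)=\tE(u^*)+\tfrac12\tB(w,w).
\end{align*}
By coercivity this is strictly larger than $\tE(u^*)$ for $w\neq 0$, so $u^*$ is the unique minimizer. Conversely, if $\tilde u$ minimizes $\tE$, then $t\mapsto \tE(\tilde u+tv)$ is a quadratic in $t$ with a minimum at $t=0$. Its derivative at zero, $\tB(\tilde u,v)-\tF(v)$, must vanish for all $v$, so $\tilde u$ solves Problem~\ref{Problem:WeakFormSGA}, and uniqueness forces $\tilde u=u^*$.
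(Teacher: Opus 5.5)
Your argument is the standard one: Lax--Milgram on $\tLH\cong (H_0^1(\Dr;\R))^{M+1}$, symmetry of $\tar$ inherited from $\Ar$, the identity $\tE(u^*+w)=\tE(u^*)+\tfrac12\tB(w,w)$, and the first-variation converse. The paper offers nothing different here; it gives no proof, only a pointer to Thm.~3.13 of \citep{Musco2024}. Those steps are correct. For continuity, your fallback via the coefficient form $\tB(u,v)=\int_{\Dr}\sum_{j,k}\langle\mathbf{B}_{jk}(x)\nabla u_j,\nabla v_k\rangle_2\,d\lambda^d(x)$, with $\lvert G_{ijk}\rvert\le\lVert p_ip_j\rVert_{L^2(\Gamma;\R)}$, works for any $\Gamma$.

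The coercivity problem you flag is genuine, and it is a gap in the statement rather than in your reasoning. With $\tar$ truncated at the same total degree $P$ as the ansatz, nothing in Assumptions~\ref{Assumption:Bounds_Det_Diffusivity}--\ref{Assumption:AdditionalAssmStrongResidual} makes $\tB$ coercive, and the theorem can fail. Take $d=N=P=M=1$, $\Dr=(0,1)$, $a\equiv 1$, $Y\sim\mathcal U([-1,1])$ and $\T(\omega,x)=s(Y(\omega))\,x$ with $s(y)=\bigl(\epsilon+((1+y)/2)^4\bigr)^{-1}\in[(1+\epsilon)^{-1},\epsilon^{-1}]$; all assumptions hold. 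Then $\Ar=1/s(Y)$, which is even a quartic polynomial in $Y$. With $p_0=1$ and $p_1=\sqrt3\,y$ one has $G_{000}=G_{011}=1$ and $G_{001}=G_{111}=0$ (and permutations). Hence $\mathbf{B}_{00}=\mathbf{B}_{11}=A_0=\epsilon+\tfrac15$ and $\mathbf{B}_{01}=\mathbf{B}_{10}=A_1=\tfrac{2\sqrt3}{15}$. For $u=w\,(p_0-p_1)$ with $w\in H_0^1(\Dr;\R)$ this gives $\tB(u,u)=2(A_0-A_1)\lVert w\rVert_{H_0^1(\Dr;\R)}^2$. If $0<\epsilon<(2\sqrt3-3)/15$, this is negative, so $\tE(tu)\to-\infty$ as $t\to\infty$ and no minimizer exists. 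At $\epsilon=(2\sqrt3-3)/15$, $u$ lies in the kernel of $\tB$, and uniqueness in Problem~\ref{Problem:WeakFormSGA} fails. So an extra hypothesis is unavoidable, as you suspected.

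Your chosen hypothesis $\sup_{y,x}\lVert\Ar-\tar\rVert_{F_2}\le\underline{\sigma}/2$ is fine for bounded $\Gamma$, which covers the Legendre setting of the experiments. For unbounded $\Gamma$ (e.g.\ Hermite chaos) it can only hold when $\tar$ is constant in $y$, since $\Ar$ is bounded and a non-constant polynomial is not. There are two cleaner repairs:
\begin{itemize}
\item Assume directly that the block matrix $(\mathbf{B}_{jk}(x))_{j,k=0}^M$ is uniformly positive definite on $(\R^d)^{M+1}$. This is exactly what Lax--Milgram needs.
\item Keep the modes of $\Ar$ up to total degree $2P$ in $\mathbf{B}$. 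Because $G_{ijk}=0$ whenever $\deg p_k>\deg p_i+\deg p_j$, this makes $\tB$ coincide with $B$ on $\tLH$. Coercivity with constant $\underline{\sigma}$ then follows from Lemma~\ref{Lemma:CourantFischer_Estimate_Frobenius_Aref} with no further assumption, at the price of modifying the Galerkin operator.
\end{itemize}
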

We refer to \citep[Thm. 3.13]{Musco2024} for a proof. 
The approximation error introduced by truncating the random fields in the bilinear and linear form decays exponentially for the stochastic Galerkin approximation under some additional assumptions (see \citep[Ch. 6]{Babuska2004}, and \citep[Ch. 4.1]{Hakula2024} for a special case). 
If the transformed matrix diffusion coefficient $\Ar \in L^2(\Gamma; L^2(\Dr; \mathcal{M}_2^d))$ is fully resolved by the PC expansion, i.e.\ $\tar = \bar{A}_{\text{ref}}$, the approximation error vanishes, since $F(v) = \tF(v)$ for all $v \in \tLH$ due to the orthonormality of $(p_k)_{k \in \N_0}$.

\subsection{Computational form of the stochastic Galerkin systems}
We assemble the systems of equations derived in the previous section for the given stochastic Galerkin approximations (SGA). 
Starting with the strong form stochastic Galerkin approximation, Problem \ref{Problem:StrongFormSGA}, we note that for a matrix valued mapping $A: \R^d \to \mathcal{M}_2^d$ and a scalar field $u: \R^d \to \R$, the divergence of $A \nabla u$ can be represented by
\begin{align*}
\nabla \cdot (A(x) \nabla u(x)) 
	&=
\sum_{i = 1}^d \frac{\partial}{\partial x_i} \big( A(x) \nabla u(x) \big)_i
= \sum_{i = 1}^d \frac{\partial}{\partial x_i} \bigg( \sum_{j = 1}^d A_{ij}(x) \frac{\partial}{\partial x_j} u(x) \bigg)
	\\
	&=
\sum_{i,j = 1}^d \bigg(
\Big(\frac{\partial}{\partial x_i} A_{ij}(x) \Big) \frac{\partial}{\partial x_j} u(x) + A_{ij}(x) \frac{\partial^2}{\partial x_i \partial x_j} u(x) 
\bigg)
	\\
	&=
\sum_{j = 1}^d \bigg(
\sum_{i = 1}^d \frac{\partial}{\partial x_i} A_{ij}(x) \bigg) \frac{\partial}{\partial x_j} u(x)
+ \sum_{i,j = 1}^d A_{ij}(x) \frac{\partial^2}{\partial x_i \partial x_j} u(x) 
	\\
	&=
\langle \operatorname{div}(A(x)) , \nabla u(x) \rangle_2 + \langle A(x), H_u(x) \rangle_{F_2},
\end{align*}
where $\operatorname{div}(A(x)) \in \R^d$ denotes the column-wise divergence of $A(x)$, i.e.\ $(\operatorname{div}(A(x)))_j := \sum_{i=1}^d \frac{\partial}{\partial x_i} A_{ij}(x)$, and $H_u(x) \in \mathcal{M}_2^d$ denotes the Hessian of $u$. Using this, we get for $k = 0, \dots , M$:
\begin{align*}
\big\langle \mathcal{R}_{\tur}(\cdot, x) \, , \, p_k \big\rangle_{L^2(\Gamma;\R)}
	&=
\bigg\langle \nabla \cdot \bigg(
\Big( \sum_{i = 0}^M A_i(x) p_i \Big)
\Big( \sum_{j = 0}^M \nabla u_j(x) p_j \Big)
\bigg)
+ \sum_{n = 0}^M f_n(x) p_n
,
p_k
\bigg\rangle_{L^2(\Gamma; \R)}
	\\
	&=
\sum_{i,j = 0}^M \nabla \cdot (A_i(x) \nabla u_j(x))
\langle p_i p_j , p_k \rangle_{L^2(\Gamma; \R)} 
+ \sum_{n = 0}^M f_n(x) \langle p_n , p_k \rangle_{L^2(\Gamma; \R)}
	\\
	&=
\sum_{i,j = 0}^M 
\big( \langle \operatorname{div}(A_i(x)), \nabla u_j(x) \rangle_2 + \langle A_i(x) , H_{u_j}(x) \rangle_{F_2} \big) \langle p_i p_j , p_k \rangle_{L^2(\Gamma; \R)} 
+ f_k(x) \quad && x \in \Dr,
	\\
\big\langle \tur, p_k \big\rangle_{L^2(\Gamma; \R)} 
	&=
\bigg\langle \sum_{i = 0}^M u_i(x) p_i, p_k \bigg\rangle_{L^2(\Gamma; \R)} = u_k(x) 
&& x \in \partial \Dr.
\end{align*}
By defining the Galerkin tensor $G = (G_{ijk}) \in \mathcal{M}_3^{M+1}$, 
the tensor operator $\mathbf{A}: \Dr \to \R^{(M+1) \times (M+1) \times d}, \, \mathbf{A}(x) = (\mathbf{A}_{jk}(x))$, 
and the tensor operator $\mathbf{B}: \Dr \to \R^{(M+1) \times (M+1) \times d \times d}, \, \mathbf{B}(x) = (\mathbf{B}_{jk}(x))$ via
\begin{align*}
G_{ijk} := \langle p_i p_j, p_k \rangle_{L^2(\Gamma; \R)} \in \R,
\quad
\mathbf{A}_{jk}(x) := \sum_{i = 0}^M \operatorname{div}(A_i(x)) G_{ijk} \in \R^d,
\quad
\mathbf{B}_{jk}(x) := \sum_{i = 0}^M A_i(x) G_{ijk} \in \R^{d \times d},
\end{align*}
the stochastic Galerkin system can be written in the following way.
\begin{problem}[Computational form of the strong form SGA] \label{Problem:ComputationalFormStrongSGA}
For the given operators defined above, find $u_0, \dots , u_M \in H_0^1(\Dr; \R) \cap H^2(\Dr; \R)$ such that
\begin{align*}
- \sum_{j = 0}^M \big( \langle \mathbf{A}_{jk}(x) , \nabla u_j(x) \rangle_2
+ \langle \mathbf{B}_{jk}(x), H_{u_j}(x) \rangle_{F_2} \big) 
	&=
f_k(x)
	&& \hspace*{-2cm}
 x \in \Dr, \, k = 0, \dots, M,
 	\\
u_k(x) 
	&= 
0
	 && \hspace*{-2cm}
x \in \partial \Dr, \, k = 0, \dots , M.
\end{align*}
\end{problem}
We continue with the weak form stochastic Galerkin approximation. Let $u,v \in \tLH$ with $u(y,x) = \sum_{i = 0}^M u_i(x) p_i(y), \, v(y,x) = \sum_{i=0}^M v_i(x) p_i(y)$. With the operators defined above, we get 
\begin{align*}
\tB(u,v) 
	&=
\int_\Gamma \int_{\Dr} \bigg\langle 
\bigg(\sum_{i=0}^M A_i(x) p_i(y) \bigg) \bigg(\sum_{j=0}^M \nabla u_j(x) p_j(y) \bigg)
,
\sum_{k=0}^M \nabla v_k(x) p_k(y) 
\bigg\rangle_2 \, \ddm
	\\
	&=
\int_{\Dr} \sum_{j,k=0}^M \langle \mathbf{B}_{jk}(x) \nabla u_j(x) , \nabla v_k(x) \rangle_2 \, d\lambda^d(x),
	\\
\tF (v) 
	&=
\int_\Gamma \int_{\Dr} 
\bigg( \sum_{i=0}^M f_i(x) p_i(y) \bigg) \bigg(\sum_{j = 0}^M v_j(x) p_j(y) \bigg) \ddm
	\\
	&=
\int_{\Dr} \sum_{i=0}^M f_i(x) v_i(x) \, d \lambda^d(x).
\end{align*}
\begin{problem}[Computational form of the weak form SGA] \label{Problem:ComputationalFormWeakSGA}
For the given operators defined above, find $u_0, \dots , u_M \in H_0^1(\Dr; \R)$ such that for every $v = \sum_{i = 0}^M v_i(x) p_i(y) \in \tLH$:
\begin{align*}
\int_{\Dr} \sum_{i,j = 0}^M \langle \mathbf{B}_{ij}(x) \nabla u_i(x) , \nabla v_j(x) \rangle_2 \, d\lambda^d(x)
	&= 
\int_{\Dr} \sum_{k = 0}^M f_k(x) v_k(x) \, d\lambda^d(x).
\end{align*}
\end{problem}
\begin{remark} \label{Remark:EnergyMinimizationWeakSGA}
Combining Theorem \ref{Thm:RitzEnergyFunction} with the computational form, the solution $u^* \in \tLH$, of Problem \ref{Problem:ComputationalFormWeakSGA}, is given as
\begin{align*}
u^* = \underset{u \in \tLH}{\operatorname{arg \, min}} 
\int_{\Dr} \frac{1}{2}  \sum_{i,j = 0}^M \langle \mathbf{B}_{ij}(x) \nabla u_i(x) , \nabla u_j(x) \rangle_2 - \sum_{k=0}^M f_k(x) u_k(x) \, d\lambda^d(x).
\end{align*}
\end{remark}
The computational forms are the basis for the training strategies of the neural networks, developed in the next section.

\section{Deep learning approach} \label{sec:DL}
In \citep{Musco2024}, we developed the neural network \textit{S-GalerkinNet} to solve the strong stochastic Galerkin approximation of an elliptic random PDE, as well as the neural network \textit{S-RitzNet} to solve the weak stochastic Galerkin approximation of an elliptic random PDE. We used the neural networks as surrogates for the spectral coefficients in the respective stochastic Galerkin formulations. 
A theoretical justification for that is given by the so-called \textit{universal approximation property}, stating that feedforward neural networks (in different setups) can approximate any continuous function on a bounded domain up to arbitrary precision. Here we mention the approximation property for sigmoidal and more general activation functions, as used in this work (see e.g.\ \citep{Cybenko1989, Hornik1989, Mhaskar1993}).
We adapt the architectures, developed in \citep{Musco2024}, to solve the problems addressed in this work. For the sake of completeness, we also introduce the neural networks in the following.
\subsection{Definition and notation}
In this section we recap the notation and basic setup of the neural networks, as given in \citep[Ch. 4]{Musco2024} and adjust the computational architectures of the respective neural networks to the random domain problems considered in this work. We approximate the spectral coefficients $(u_0, \dots, u_M)^T: \overline{\DD} \to \R^{M+1}$ of the stochastic Galerkin approximation $\tu (y,x) = \sum_{i = 0}^M u_i(x) p_i(y)$ of Problem \ref{Problem:ComputationalFormStrongSGA} and \ref{Problem:ComputationalFormWeakSGA} respectively by deep feedforward neural networks. 

A deep feedforward neural network of depth $D \in \N$ defines a parameterized mapping $\mathcal{N}_{\theta} : \R^{d_0} \to \R^{d_{D+1}}$, with parameters $\theta \in \Theta$, that takes an input $x \in \R^{d_0}$, and consists of $D$ hidden layers $\mathcal{L}^{(1)}, \dots , \mathcal{L}^{(D)}$ of sizes $d_1, \dots , d_D \in \N$, and an output layer $\mathcal{L}^{(D+1)}$ of size $d_{D+1}$. 
The size of a layer determines the number of computational nodes, called neurons, it consists of. Each neuron computes an affine transformation of the output of the previous layer, respectively the input, which is then composed, called activated, with a usually non-linear function called activation function. Formally, the output $l^{(i)} \in \R^{d_i}$ of the $i$-th layer $\mathcal{L}^{(i)} : \R^{d_{i-1}} \to \R^{d_i}$, for $i = 1, \dots, D+1$, is given by
\begin{align*}
l^{(i)} = \mathcal{L}^{(i)}(l^{(i-1)}) := \sigma^{(i)} \left( W^{(i)} l^{(i-1)} + b^{(i)} \right),
\end{align*}
where $\sigma^{(i)}: \R \to \R$ is the activation function of the $i$-th layer, applied component-wise, $W^{(i)} \in \R^{d_i \times d_{i-1}}$ and $b^{(i)} \in \R^{d_i}$ denote the weight matrix and the bias of the $i$-th layer, and $l^{(i-1)} \in \R^{d_{i-1}}$ the output of the $(i-1)$-th layer, respectively the input $x \in \R^{d_0}$ to the neural network for $i - 1 = 0$. The $j$-th component of $l^{(i)} = (l_1^{(i)} , \dots , l_{d_i}^{(i)})$ is the output of the $j$-th neuron in the $i$-th layer.
The deep feedforward neural network is defined as the composition of the layers
\begin{align*}
\mathcal{N}_{\theta} : \R^{d_0} \to \R^{d_{D+1}}, \, x \mapsto \left( \mathcal{L}^{(D+1)} \circ \mathcal{L}^{(D)} \circ \dots \circ \mathcal{L}^{(1)} \right)(x),
\end{align*}
where the parameters, called weights, are given by
\begin{align*}
\theta = \left( W^{(1)}, b^{(1)}, \dots , W^{(D+1)}, b^{(D+1)} \right) \in \Theta \subset \R^{d_1 \times d_0} \times \R^{d_1} \times \dots \times \R^{d_{D+1} \times d_D} \times \R^{d_{D+1}} := \mathcal{R}.
\end{align*}
The parameter space $\Theta$ can be a proper subset of $\mathcal{R}$, placing additional constraints on the weights, e.g.\ non-negativity or special weight matrices resulting in disconnected neurons. If smooth activation functions are used, the neural network is, as a composition of smooth functions, itself smooth.
Neural networks have a very high expressive power and achieve an impressive performance in a wide range of tasks. The task is usually taught to the neural network by a specified loss function
penalizing wrong predictions in a proper way. 
The algorithm then seeks to find a set of weights that minimizes some norm or transformation of the loss function, called risk. In supervised learning, the optimization is carried out with the help of a dataset containing ground-truth data, whereas in unsupervised learning no such dataset is used, the loss is calculated directly via a transformation of the input, without the need of ground truth labels. This can be advantageous, when training labels are very expensive or not available. Combinations of both training methods are also very common.

\subsection{Neural network architecture and loss function}
In this work we adapt the \textit{S-GalerkinNet}, developed in \citep{Musco2024}, to approximate a solution of the strong form stochastic Galerkin approximation of the transformed problem, Problem \ref{Problem:StrongFormSGA}, and the \textit{S-RitzNet} to approximate a solution of the weak form stochastic Galerkin approximation of the transformed problem, Problem \ref{Problem:WeakFormSGA}. 
The training strategies, namely the corresponding loss functions, are derived from the computational forms, Problem \ref{Problem:ComputationalFormStrongSGA} for the \textit{S-GalerkinNet}, and Problem \ref{Problem:ComputationalFormWeakSGA} for the \textit{S-RitzNet}, using the Ritz energy minimization described in Remark \ref{Remark:EnergyMinimizationWeakSGA}. Furthermore, a schematic overview of the architectures of the neural networks is given. 

Starting with the \textit{S-GalerkinNet}, the goal is to  approximate the spectral coefficients $u_0, \dots , u_M \in H_0^1(\Dr; \R) \cap H^2(\Dr; \R)$ of the stochastic Galerkin approximation $\tur (y,x) = \sum_{i = 0}^M u_i(x) p_i(y) \in L^{2;(M)}(\Gamma; H_0^1(\Dr; \R) \cap H^2(\Dr; \R))$, solving Problem \ref{Problem:StrongFormSGA}, by the neural network.
Here, we don't approximate the spectral coefficients $(u_0, \dots , u_M)^T: \DDr \to \R^{M+1}$ directly by the \textit{S-GalerkinNet} $\mathcal{N}_\theta^{\operatorname{SG}} : \DDr \to \R^{M+1}$, but we incorporate the homogeneous Dirichlet boundary conditions as a hard constraint. For that, we use an enforcer function $e \in C^2(\DDr; \R)$, such that $e(x) = 0$ for $x \in \partial \Dr$, and $e(x) > 0$ for $x \in \Dr$.
With that, we define the function
\begin{align*}
{\scriptstyle \mathcal{U}}_{\theta^*}^{\operatorname{SG}} = \big({\scriptstyle \mathcal{U}}_{0;\theta^*}^{\operatorname{SG}}, \dots , {\scriptstyle \mathcal{U}}_{M;\theta^*}^{\operatorname{SG}}\big)^T: \DDr \to \R^{M+1}, \, x \mapsto e(x) \cdot \mathcal{N}_{\theta^*}^{\operatorname{SG}}(x),
\end{align*}
and use the components as approximation of the spectral coefficients of the strong form stochastic Galerkin approximation $\tur \in L^{2;(M)}(\Gamma; H_0^1(\Dr; \R) \cap H^2(\Dr; \R))$, i.e.\
\begin{align*}
\tur (y,x) \approx \sum_{i = 0}^M {\scriptstyle \mathcal{U}}_{i;\theta^*}^{\operatorname{SG}}(x) p_i(y) =: \tu_{\text{ref}; \,\theta^*}(y,x).
\end{align*}
Note that enforcing the boundary conditions as a hard constraint is not limited to homogeneous Dirichlet boundary conditions or rectangular domains (see e.g.\ \citep{Berrone2023, Sukumar2022}). 
To derive the learning strategy of the \textit{S-GalerkinNet}, i.e.\ the optimization scheme to find the optimal parameters $\theta^* \in \Theta^{\operatorname{SG}}$, we recall the goal to find the neural network approximation ${\scriptstyle \mathcal{U}}_{\theta^*}^{\operatorname{SG}}: \DDr \to \R^{M+1}$ of the spectral coefficients, such that for every $k = 0 , \dots , M$ and $x \in \Dr$
\begin{align*}
\big\langle \mathcal{R}_{\tu_{\text{ref}; \, \theta^*}}(\cdot, x) , p_k \big\rangle_{L^2(\Gamma; \R)} \overset{!}{=} 0. 
\end{align*}
In order to define a proper loss function to achieve this goal, we note the following:
\begin{theorem}
For smooth activation functions, the mapping
\begin{align*}
x \mapsto \big\langle \mathcal{R}_{\tu_{\text{ref}; \, \theta}}(\cdot, x) , p_k \big\rangle_{L^2(\Gamma; \R)}
\end{align*}
is a member of $L^2(\Dr; \R)$ for every $\theta \in \Theta^{\operatorname{SG}}$ and every $k = 0, \dots , M$.
\end{theorem}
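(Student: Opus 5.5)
Because the statement concerns a single fixed parameter $\theta$ and a single index $k$, it suffices to write the projected residual explicitly via the computational form and bound each term. By the computation preceding Problem~\ref{Problem:ComputationalFormStrongSGA}, with $u_j := {\scriptstyle \mathcal{U}}_{j;\theta}^{\operatorname{SG}} = e\cdot(\mathcal{N}_\theta^{\operatorname{SG}})_j$,
\begin{align*}
\big\langle \mathcal{R}_{\tu_{\text{ref}; \, \theta}}(\cdot, x) , p_k \big\rangle_{L^2(\Gamma; \R)}
= \sum_{j=0}^M \big( \langle \mathbf{A}_{jk}(x), \nabla u_j(x)\rangle_2 + \langle \mathbf{B}_{jk}(x), H_{u_j}(x)\rangle_{F_2}\big) + f_k(x).
\end{align*}
Since $L^2(\Dr;\R)$ is a vector space, I will show that each of the finitely many summands lies in $L^2(\Dr;\R)$.

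\textbf{The forcing term.} By orthonormality, $f_k(x) = \int_\Gamma \tfr(y,x)p_k(y)\,d\mu(y)$. Cauchy--Schwarz in $y$ and Fubini give $\lVert f_k\rVert_{L^2(\Dr)} \le \lVert \Fr\rVert_{L^2(\Omega;L^2(\Dr;\R))}<\infty$, using the bound on $\Fr$ established earlier.

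\textbf{The network factors.} The activation functions are smooth, so $\mathcal{N}_\theta^{\operatorname{SG}}$ is $C^\infty$. Moreover $e\in C^2(\DDr;\R)$, so each $u_j$ lies in $C^2(\DDr;\R)$. Because $\DDr$ is compact, $u_j$, $\nabla u_j$ and $H_{u_j}$ are bounded on $\DDr$ by some constant $K_\theta$.

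\textbf{The coefficient terms.} The Galerkin tensor entries $G_{ijk}$ are finite: they are integrals of products of three polynomials, which requires the finite moments already implicit in the orthonormal basis assumption. Theorem~\ref{Theorem:TransformedMatrixDiffusivityInH1} gives $\Ar\in L^2(\Omega;H^1(\Dr;\mathcal{M}_2^d))$. The spectral coefficients $A_i = \int_\Gamma \bar A_{\text{ref}}(y,\cdot)p_i(y)\,d\mu(y)$ therefore lie in $H^1(\Dr;\mathcal{M}_2^d)$; this follows by the same Cauchy--Schwarz/Bochner argument, using that Bochner integration commutes with the bounded operator $D^1$. Hence $\operatorname{div}(A_i)\in L^2(\Dr;\R^d)$, and $\mathbf{A}_{jk}\in L^2(\Dr;\R^d)$ and $\mathbf{B}_{jk}\in L^2(\Dr;\R^{d\times d})$ as finite linear combinations. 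The Frobenius and Euclidean Cauchy--Schwarz inequalities then give
\begin{align*}
\lvert\langle \mathbf{A}_{jk}(x),\nabla u_j(x)\rangle_2\rvert \le K_\theta \lVert \mathbf{A}_{jk}(x)\rVert_2,
\qquad
\lvert\langle \mathbf{B}_{jk}(x),H_{u_j}(x)\rangle_{F_2}\rvert \le K_\theta\lVert \mathbf{B}_{jk}(x)\rVert_{F_2},
\end{align*}
so both terms are square integrable.

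\textbf{Main obstacle.} The delicate step is justifying that the coefficients $A_i$ are in $H^1$ and that the first identity holds pointwise, i.e.\ interchanging the spatial divergence with the $\mu$-integration. I would handle this through the weak-derivative characterization: for a test function $\varphi$,
\begin{align*}
\int_{\Dr} A_i\,\partial_l\varphi \,d\lambda^d(x) = \int_\Gamma p_i \int_{\Dr} \bar A_{\text{ref}}\,\partial_l\varphi \,d\lambda^d(x)\,d\mu(y)
\end{align*}
by Fubini, after which one integrates by parts inside. The $L^\infty$ bound on $D^1\Ar$ from the proof of Theorem~\ref{Theorem:TransformedMatrixDiffusivityInH1} supplies the integrability needed for Fubini.
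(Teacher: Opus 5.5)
Your proposal is correct and follows essentially the paper's argument. You expand the projected residual into the finite Galerkin sum, bound the $C^2$ network factors uniformly on the compact closure, and combine $A_i\in H^1(\Dr;\mathcal{M}_2^d)$ (from Theorem~\ref{Theorem:TransformedMatrixDiffusivityInH1} via the PC expansion), finiteness of $G_{ijk}$ and Cauchy--Schwarz; the paper merely makes this quantitative, with the explicit bound $4(M+1)^2\sum_{i,j}\lVert p_ip_j\rVert^2_{L^2(\Gamma;\R)}\, C_j\lVert A_i\rVert^2_{H^1(\Dr;\mathcal{M}_2^d)} + 2\lVert f_k\rVert^2_{L^2(\Dr;\R)}$.

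The ``main obstacle'' you flag is lighter than you suggest. The truncated expansions are finite sums, so no interchange of divergence and $\mu$-integration is needed for the identity. Moreover, $A_i\in H^1(\Dr;\mathcal{M}_2^d)$ is immediate once the PC coefficients are taken as Bochner integrals in the Hilbert space $H^1(\Dr;\mathcal{M}_2^d)$.
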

\begin{proof}
For smooth activation functions, the neural network $\mathcal{N}_\theta^{\operatorname{SG}} : \DDr \to \R^{M+1}$ is itself a smooth function, and therefore ${\scriptstyle \mathcal{U}}_{\theta}^{\operatorname{SG}} \in C^2(\DDr; \R^{M+1})$ for every $\theta \in \Theta^{\operatorname{SG}}$. Furthermore, the column-wise divergence $\operatorname{div}(A(x)) \in \R^d$ of a matrix valued mapping $A: \R^d \to \mathcal{M}_2^d$, satisfies
\begin{align*}
\lVert \operatorname{div}(A(x)) \rVert_2^2
	&=
\sum_{j = 1}^d \bigg( \sum_{i = 1}^d \frac{\partial}{\partial x_i} (A(x))_{ij} \bigg)^2
	\leq
d \sum_{i,j=1 }^d \Big( \frac{\partial}{\partial x_i} (A(x))_{ij} \Big)^2
	\leq
d \sum_{i,j,k=1}^d \Big(\frac{\partial}{\partial x_k} (A(x))_{ij} \Big)^2
	=
d \lVert D^1 A(x) \rVert_{F_3}^2.
\end{align*}
Therefore, we get for the projected residual
\begin{align*}
& \int_{\Dr} \big\lvert \langle \mathcal{R}_{\tu_{\text{ref}; \, \theta}}(\cdot, x) , p_k \big\rangle_{L^2(\Gamma; \R)} \big\rvert^2 \, d\lambda^d(x)
	\\
	& \hspace*{0.5cm} =
\int_{\Dr} \big\lvert
\sum_{i,j=0}^M 
\big(
\big\langle \operatorname{div}(A_i(x)) , \nabla {\scriptstyle \mathcal{U}}_{j;\theta}^{\operatorname{SG}}(x) \big\rangle_2
+
\big\langle A_i(x) , H_{{\scriptstyle \mathcal{U}}_{j;\theta}^{\operatorname{SG}}}(x) \big\rangle_{F_2}
\big)
\langle p_i p_j , p_k \rangle_{L^2(\Gamma; \R)}
+ f_k(x)
\big\rvert^2 \, d\lambda^d(x)
	\\
	& \hspace*{0.5cm} \leq
2(M + 1)^2 \sum_{i,j=0}^M 
\int_{\Dr} \big\lvert
\big(
\big\langle \operatorname{div}(A_i(x)) , \nabla {\scriptstyle \mathcal{U}}_{j;\theta}^{\operatorname{SG}}(x) \big\rangle_2
+
\big\langle A_i(x) , H_{{\scriptstyle \mathcal{U}}_{j;\theta}^{\operatorname{SG}}}(x) \big\rangle_{F_2}
\big)
\langle p_i p_j , p_k \rangle_{L^2(\Gamma; \R)}
\big\rvert^2 \, d \lambda^d(x)
+ 2 \lVert f_k \rVert_{L^2(\Dr, \R)}^2
	\\
	& \hspace*{0.5cm} \leq
4(M + 1)^2 \sum_{i,j=0}^M \lVert p_i p_j \rVert_{L^2(\Gamma; \R)}^2
\int_{\Dr}
\big\lvert
\big\langle \operatorname{div}(A_i(x)) , \nabla {\scriptstyle \mathcal{U}}_{j;\theta}^{\operatorname{SG}}(x) \big\rangle_2
\big\rvert^2
+
\big\lvert
\big\langle A_i(x) , H_{{\scriptstyle \mathcal{U}}_{j;\theta}^{\operatorname{SG}}}(x) \big\rangle_{F_2}
\big\rvert^2 \, d \lambda^d(x)
+ 2 \lVert f_k \rVert_{L^2(\Dr, \R)}^2
	\\
	& \hspace*{0.5cm} \leq
4(M + 1)^2 \sum_{i,j=0}^M \lVert p_i p_j \rVert_{L^2(\Gamma; \R)}^2
\int_{\Dr}
d \lVert D^1 A_i(x) \rVert_{F_3}^2 \lVert \nabla {\scriptstyle \mathcal{U}}_{j;\theta}^{\operatorname{SG}}(x) \rVert_2^2
+ \lVert A_i(x) \rVert_{F_2}^2 \lVert H_{{\scriptstyle \mathcal{U}}_{j;\theta}^{\operatorname{SG}}}(x) \rVert_{F_2}^2 \, d\lambda^d(x)
+ 2 \lVert f_k \rVert_{L^2(\Dr, \R)}^2
	\\
	& \hspace*{0.5cm} \leq
4(M + 1)^2 \sum_{i,j=0}^M \lVert p_i p_j \rVert_{L^2(\Gamma; \R)}^2
C_j \lVert A_i \rVert_{H^1(\Dr; \mathcal{M}_2^d)}^2 
+ 2 \lVert f_k \rVert_{L^2(\Dr, \R)}^2 
< \infty,
\end{align*}
which is finite by Theorem~\ref{Theorem:TransformedMatrixDiffusivityInH1}, and the corresponding polynomial chaos expansion. Furthermore, the constants are given by
\begin{align*}
C_j := \max \Big\lbrace d \max_{x \in \DDr} \lVert \nabla {\scriptstyle \mathcal{U}}_{j;\theta}^{\operatorname{SG}}(x) \rVert_2^2 , \max_{x \in \DDr} \lVert H_{{\scriptstyle \mathcal{U}}_{j;\theta}^{\operatorname{SG}}}(x) \rVert_{F_2}^2
\Big\rbrace,
\end{align*}
which are finite for every $\theta \in \Theta^{\operatorname{SG}}, \, j = 0, \dots , M$.
\end{proof}
This enables us to define an unsupervised loss function of least-squares-type to train the neural network, namely
\begin{align*}
\mathbf{L}^{\operatorname{SG}}(x; \mathcal{N}_\theta^{\operatorname{SG}}) := \frac{1}{M + 1} \sum_{k=0}^M \big\langle \mathcal{R}_{\tu_{\text{ref}; \, \theta}}(\cdot, x) , p_k \big\rangle_{L^2(\Gamma; \R)}^2.
\end{align*}
Working with the computational form of the strong form stochastic Galerkin approximation, Problem~\ref{Problem:ComputationalFormStrongSGA}, the associated risk is the Monte Carlo integration of the above defined loss function, i.e.\ the training procedure of the \textit{S-GalerkinNet} is given as follows.
\begin{problem}[Training of the \textit{S-GalerkinNet}] \label{Problem:Training_SGN}
For given training points $(x_1, \dots , x_n) \in \Dr^n$, find a set of parameters $\theta^* \in \Theta^{\operatorname{SG}}$, such that
\begin{align*}
& \lambda^d(\Dr) \, \frac{1}{n} \sum_{l = 1}^n \mathbf{L}^{\operatorname{SG}}(x_l; \mathcal{N}_{\theta^*}^{\operatorname{SG}})
	\\
	& \hspace*{0.5cm} =
\lambda^d(\Dr) \, \frac{1}{n} \sum_{l = 1}^n 
\frac{1}{M+1} \sum_{k=0}^M 
\left( 
\sum_{j=0}^M \big( 
\big\langle \mathbf{A}_{jk}(x_l) , \nabla {\scriptstyle \mathcal{U}}_{j;\theta^*}^{\operatorname{SG}}(x_l) \big\rangle_2
+ \big\langle \mathbf{B}_{jk}(x_l), H_{{\scriptstyle \mathcal{U}}_{j;\theta^*}^{\operatorname{SG}}}(x_l) \big\rangle_{F_2}
\big)
+ f_k(x_l)
\right)^2 
\overset{!}{=} 0.
\end{align*}
\end{problem}
For the stochastic Galerkin approximation $\tur \in \tLH$ of the weak form, we approximate the spectral coefficients $u_0, \dots , u_M \in H_0^1(\Dr ; \R)$ by the \textit{S-RitzNet} $\mathcal{N}_\theta^{\operatorname{SR}}: \DDr \to \R^{M+1}$. The training strategy of this neural network is based on the energy minimization of the computational form given in Remark~\ref{Remark:EnergyMinimizationWeakSGA}.
Since this defines a minimization of an integral operator over the spatial domain, choosing the integrand as loss function is quite natural for this task. 
We use an analogous notation to denote the neural network spectral coefficients with enforced Dirichlet boundary conditions by
\begin{align*}
{\scriptstyle \mathcal{U}}_{\theta}^{\operatorname{SR}} = 
({\scriptstyle \mathcal{U}}_{0;\theta}^{\operatorname{SR}}, \dots , {\scriptstyle \mathcal{U}}_{M;\theta}^{\operatorname{SR}})^T: \DDr \to \R^{M+1}, \, x \mapsto e(x) \cdot \mathcal{N}_{\theta}^{\operatorname{SR}}(x).
\end{align*}
Using the computational form, we define the loss function for the \textit{S-RitzNet} as 
\begin{align*}
\mathbf{L}^{\operatorname{SR}}(x; \mathcal{N}_\theta^{\operatorname{SR}}) 
:= 
\frac{1}{2} \sum_{i,j = 0}^M 
\big\langle \mathbf{B}_{ij}(x) \nabla {\scriptstyle \mathcal{U}}_{i ; \theta}^{\operatorname{SR}} (x) , \nabla {\scriptstyle \mathcal{U}}_{j ; \theta}^{\operatorname{SR}} (x) \big\rangle_2
- \sum_{k=0}^M f_k(x) {\scriptstyle \mathcal{U}}_{k ; \theta}^{\operatorname{SR}} (x).
\end{align*}
The corresponding risk is the Monte Carlo integration of the loss function, i.e.\ the training procedure of the \textit{S-RitzNet} is given as follows.
\begin{problem}[Training of the \textit{S-RitzNet}] \label{Problem:TrainingSRN}
For given training points $(x_1, \dots , x_n) \in \Dr^n$, find a set of parameters $\theta^* \in \Theta^{\operatorname{SR}}$, such that
\begin{align*}
&\lambda^d(\Dr) \frac{1}{n} \sum_{l = 1}^n \mathbf{L}^{\operatorname{SR}}(x_l; \mathcal{N}_{\theta^*}^{\operatorname{SR}}) 
	=
\lambda^d(\Dr) \frac{1}{n} \sum_{l = 1}^n 
	\Big( 
		\frac{1}{2} \sum_{i,j = 0}^M 
\big\langle \mathbf{B}_{ij}(x_l) \nabla {\scriptstyle \mathcal{U}}_{i ; \theta^*}^{\operatorname{SR}} (x_l) , \nabla {\scriptstyle \mathcal{U}}_{j ; \theta^*}^{\operatorname{SR}} (x_l) \big\rangle_2
- \sum_{k=0}^M f_k(x_l) {\scriptstyle \mathcal{U}}_{k ; \theta^*}^{\operatorname{SR}} (x_l)
	\Big)
	\\
	& \qquad =
\min_{\theta \in \Theta^{\operatorname{SR}}} \lambda^d(\Dr) \frac{1}{n} \sum_{l = 1}^n \mathbf{L}^{\operatorname{SR}}(x_l; \mathcal{N}_{\theta}^{\operatorname{SR}}).
\end{align*}
\end{problem}
Since the actual minimum is not known, the value of the risk is not as interpretable as for the strong residual. Therefore, every few epochs, we calculate a validation error to get a clearer quantification of the risk, and to formulate a stopping criterion for the neural network optimization. 

	\begin{figure}[t!]
	\begin{center}
	\includegraphics[width=0.9999\textwidth]{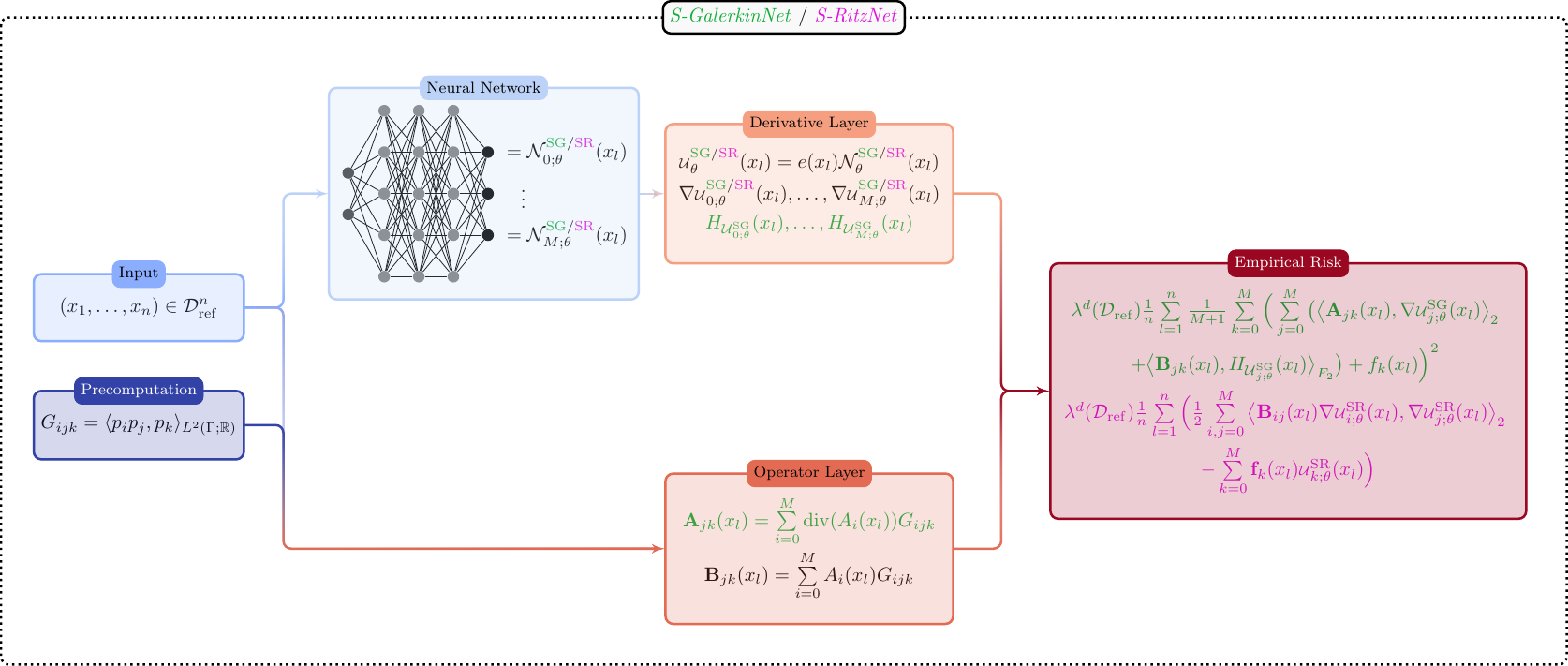}
	\end{center}
	\caption{Architecture of \textit{S-GalerkinNet} and \textit{S-RitzNet}.} \label{Fig:NN_Architecture_Combined}
	\end{figure}

A schematic overview of the architectures of the \textit{S-GalerkinNet} and the \textit{S-RitzNet} is given in Figure \ref{Fig:NN_Architecture_Combined}. The nodes written in black are the same for both architectures (with different weights), while the ones depicted in green only refer to the \textit{S-GalerkinNet}, and the ones depicted in magenta only refer to the \textit{S-RitzNet}. 
\\
The choice of input spatial points $(x_1, \dots , x_n) \in \Dr^n$ is determined experimentally for the specific experiment. In the experiments presented here, we work with quasi-random Sobol points (see \citep{Sobol1967}) and different quadrature points stated in the specific experiment.
The stochastic integrals, i.e.\ the Galerkin tensor $G \in \R^{(M+1) \times (M+1) \times (M+1)}$, are precomputed once in the offline phase.
\\
The fully connected neural networks consist of a feedforward structure and $M + 1$ output neurons, where the $k$-th output neuron $\mathcal{N}_{k;\theta}^{\operatorname{SG} / \operatorname{SR}} : \DDr \to \R$, for $k = 0 , \dots , M$, is used for the neural network approximation ${\scriptstyle \mathcal{U}}_{k ; \theta}^{\operatorname{SG} / \operatorname{SR}} = e(x)\mathcal{N}_{k;\theta}^{\operatorname{SG} / \operatorname{SR}}(x)$ of the $k$-th spectral coefficient of the stochastic Galerkin approximation 
\begin{align*}
\sum_{i = 0}^M {\scriptstyle \mathcal{U}}_{i;\theta}^{\operatorname{SG} / \operatorname{SR}}(x) p_i(y) \approx \tur (y,x).
\end{align*}
The specific details of the neural networks, such as depth, width, activation functions, etc.\ are determined experimentally, and stated in the corresponding experiment section.
\\
In the derivative layer, which is non-trainable, the boundary conditions are enforced, as described above, and the occurring derivatives of the neural networks are calculated using automatic differentiation (see \citep{Baydin2017}). 
In the also non-trainable operator layer, the operators for the computational forms are assembled, and the derivatives of the transformed matrix diffusion coefficient are either obtained analytically or via automatic differentiation. 
The additional computations for the \textit{S-GalerkinNet} in these layers impact the overall time-to-error performance of this approach as shown in the following experiments.
\\
In the last forward step, the above derived empirical risk is calculated. 
In the not depicted backward step, this risk is minimized and the neural network weights are updated, called training. The training of the \textit{S-GalerkinNet} and \textit{S-RitzNet} is purely unsupervised, i.e.\ no labeled ground-truth data is used. The optimization is  based on a variant of stochastic gradient descent, called Adam algorithm (see \citep{Kingma2015}), where gradients are calculated using the backpropagation algorithm. 
We refer to \citep[Ch. 6]{Goodfellow2016} for details on feedforward neural networks and backpropagation. 
During the training, a callback method monitors a validation error for a prescribed number of optimizer iterations. The neural network parameters corresponding to the best validation error performance during training are retained.

\section{Numerical Experiments} \label{Section:Numerical_Experiments}
In this section, we present numerical examples, where we solve two different random domain problems for the elliptic random partial differential equation. The numerical experiments are based on the transformed strong and weak stochastic Galerkin formulations for random domain problems. 
We approximate the solution of the stochastic Galerkin systems by the corresponding deep learning approaches \textit{S-GalerkinNet} and \textit{S-RitzNet}, introduced in Section~\ref{sec:DL}.
The neural networks are implemented using the Python library TensorFlow (\citep{Abadi2015}), and trained on a single \textit{NVIDIA RTX 3070} GPU and a \textit{12th Gen Intel Core i9-12900} CPU.
The error $\epsilon_{M, \theta}$, stated in the experiments, is an approximation of the relative $L^2(\Omega; H^1(\Dw; \R))$ distance between a reference solution $u : \Omega \times \Dw \to \R$ on the random domain (which is analytically available or obtained by other methods, stated in the experiments), and the neural network stochastic Galerkin approximation $u_{\theta}^{(M)} : \Omega \times \Dw \to \R, \, 
u_{\theta}^{(M)} (\omega, \mathcal{T}^{-1}(\omega, x)) = \sum_{i = 0}^M {\scriptstyle \mathcal{U}}_{i ; \theta}^{\operatorname{SG / SR}}(\mathcal{T}^{-1}(\omega, x)) p_i(Y(\omega))$, given by the \textit{S-GalerkinNet} respectively the \textit{S-RitzNet}, defined on the reference domain and transformed on the random domain. The error is given by
\begin{align*}
\epsilon_{M, \theta} 
	&\approx 
\frac{\big\lVert u - u_{\theta}^{(M)} \big\rVert_{L^2(\Omega; H^1(\Dw; \R))}}{
\lVert u \rVert_{L^2(\Omega; H^1(\Dw; \R))}
},
\end{align*}
where the derivatives are calculated analytically, respectively via automatic differentiation, the spatial integrals are approximated using the trapezoid rule, and the stochastic integrals are approximated using Monte Carlo integration. 

In addition to the approximation errors, we investigate the computational cost of the different deep learning approaches. The reported training times, presented in the performance plots encompass the complete Python wall clock time of the online phase of the respective training procedure. This includes all CPU and GPU computations for the complete training process as well as TensorFlow's graph tracing and initialization. 
We specifically compare the computational times of the strong residual based deep learning approaches with those of their weak Ritz energy based counterparts. 
To gain a more detailed understanding of the influence of the architectural differences between the strong and weak deep learning approaches, we compare specific components of a training step. These components consist of the forward pass together with the evaluation of the required spatial derivatives of the neural networks, the evaluation of the coupling operators together with the assembly of the loss function and the computation of the empirical risk, and, finally, the evaluation of the neural network derivatives with respect to the trainable parameters together with the Adam optimizer step. 
Since parts of the training process, such as the generation and preparation of the spatial grid and other auxiliary operations are performed on the CPU, while GPU operations are executed in parallel, the computational costs of the individual stages cannot be separated reliably using Python wall clock measurements. 
We therefore use TensorBoard, the profiling toolkit provided with TensorFlow, to profile the GPU execution times of the mentioned training step components, executed subsequently for profiling purposes.
The reported percentages for the individual components of a training step are based on these measured GPU execution times.

At first we consider an example in one spatial dimension on a randomly stretched interval, with an analytical solution available. 
To this end, let the deterministic reference domain be given by $\DDr = [0,1]$, and let the random domain mapping be given by $\mathcal{T}(\omega, x) = ( 0.1 + 5 \lvert Y(\omega) \rvert ) x$, where $Y \sim \mathcal{U}([-1,1])$. This yields randomly stretched intervals with random right boundary, i.e. realizations of the random domains are given by $\DDw = \{x \in \R \, : \, 0 < x < 0.1 + 5 \lvert Y(\omega) \rvert \}$.
The random domain problem considered is to find the solution $u: \Omega \times \DDw \to \R$, such that
\begin{align*}
- \frac{d}{dx} \left( \big((x - 0.5)^2 + 1\big) \frac{d}{dx} u(\omega, x) \right) 
	& = 
1 
	&& \hspace*{-1.5cm}
x \in \Dw, \, \omega \in \Omega, 	
	\\
u(\omega, x)
	& =
0
	&& \hspace*{-1.5cm}
x \in \partial \Dw, \, \omega \in \Omega.
\end{align*}
There exists a unique solution to this random domain problem, which is analytically given by 
\begin{align*}
u(\omega, x) 
	 &=
\frac{
        \log\left(
        (5 \lvert Y(\omega) \rvert - 0.4)^2 + 1 \right) - \log(1.25)
}{
    2(\arctan(5 \lvert Y(\omega) \rvert - 0.4) + \arctan(0.5))
}
(\arctan(x - 0.5) + \arctan(0.5))
	- \frac{1}{2} \log\left(\frac{(x - 0.5)^2 + 1}{1.25} \right).
\end{align*}
The transformed diffusion field $\Ar : \Omega \times \Dr \to \R$ and transformed forcing term $\Fr: \Omega \times \Dr \to \R$ read
\begin{align*}
\Ar(\omega, x) 
	= \frac{1}{0.1 + 5 \lvert Y(\omega) \rvert}
	\big(
		(( 0.1 + 5 \lvert Y(\omega) \rvert ) x - 0.5)^2 + 1
	\big) , \quad
\Fr(\omega, x) 
	= 0.1 + 5 \lvert Y(\omega) \rvert.
\end{align*}
We choose the enforcer function $e(x) = x (1-x)$ on the deterministic reference domain and approximate the corresponding spectral coefficients by
\begin{align*}
{\scriptstyle \mathcal{U}}_{\theta}^{\operatorname{SG}}(x) 
	= x(1-x)\mathcal{N}_{\theta}^{\operatorname{SG}}(x), \quad
{\scriptstyle \mathcal{U}}_{\theta}^{\operatorname{SR}}(x) 
	= x(1-x)\mathcal{N}_{\theta}^{\operatorname{SR}}(x).
\end{align*}
We use the normalized Legendre polynomials evaluated in $Y \sim \mathcal{U}([-1,1])$ as a polynomial chaos basis of $L^2([-1,1]; \R)$ with respect to the uniform distribution. The experiments are carried out with maximal polynomial degrees $P = 0, \dots, 30$ for the stochastic Galerkin approximations. 
The spectral coefficients of the polynomial chaos expansion of the transformed matrix diffusion field and forcing term are analytically available. The corresponding Galerkin coupling operators $\mathbf{A}: \Dr \to \R^{(M + 1) \times (M + 1)}, \, \mathbf{B}: \Dr \to \R^{(M + 1) \times (M + 1)}$ are evaluated online for every training batch using the analytically derived spectral coefficients and corresponding derivatives.  
The resulting Galerkin systems yield coupled systems of dimension $M + 1 = P + 1$.
Both neural networks use the same fully connected feedforward architecture, consisting of six hidden layers, each consisting of 45 Swish activated neurons (see \citep{Ramachandran2017}) and a linear output layer consisting of $M + 1$ neurons. 
The \textit{S-GalerkinNet} is trained on a continuous stream of non-repeating Sobol points in batches of $1024$.
In this experiment, the \textit{S-RitzNet} optimization is very unstable and significantly worse than that of the \textit{S-GalerkinNet} when trained on a continuous stream or fixed grid of Sobol points for various batch sizes and learning rate schedules. 
To achieve a similar error performance with the \textit{S-RitzNet}, the neural network is trained on a fixed grid of $128$ Gauss-Legendre quadrature points, where the empirical risk is weighted with the corresponding quadrature weights.
Furthermore, in order to achieve a similar error performance to that of the \textit{S-GalerkinNet}, the optimization must resolve changes in the approximated Ritz energy functional that are too small to be reliably represented in single precision computations. Therefore the training of the \textit{S-RitzNet} is performed in double precision, while for the training of the \textit{S-GalerkinNet} single precision is sufficient.

The second order spatial derivatives to assemble the loss function of the \textit{S-GalerkinNet} are obtained via TensorFlow's forward-over-forward automatic differentiation, implemented using two nested forward-mode Jacobian-vector products. The \textit{S-RitzNet} only requires first order spatial derivatives, which are obtained by a single forward-mode Jacobian-vector product. This results in a reduction in GPU execution time for calculating a \textit{S-RitzNet} forward pass together with the spatial neural network derivatives, compared to the GPU execution time for the \textit{S-GalerkinNet}, by $24$ percent. The GPU execution time for evaluating the Galerkin coupling operators, is reduced by approximately $9$ percent for the \textit{S-RitzNet}.
The derivatives of both neural networks with respect to the trainable parameters are obtained via TensorFlow's reverse-mode automatic differentiation. 
There is no significant difference between the two training strategies in the GPU execution time required for the reverse-mode derivatives together with the optimizer step.
A forward evaluation together with the nested spatial Jacobian-vector products accounts for approximately $24$ percent of the GPU execution time of a \textit{S-GalerkinNet} training step.
The operations necessary to evaluate the Galerkin coupling operators and to assemble the corresponding loss functions and risk evaluations are computationally not demanding. For the \textit{S-GalerkinNet}, this step accounts for only $2$ percent of the GPU execution time, while the computation of the neural network derivatives with respect to the trainable parameters, together with the Adam optimizer step, accounts for the remaining $74$ percent. 
For the \textit{S-RitzNet} a forward evaluation together with the first order spatial derivative accounts for $19$ percent of the GPU execution time of one training step. The computational cost of evaluating the Galerkin coupling operator and assembling the loss function together with the quadrature rule to obtain the empirical risk is not enough to make a noticeable difference in the computational time in double precision. This part of the training step also accounts for approximately $2$ percent of the GPU execution time. The reverse-mode automatic differentiation together with the Adam optimizer step accounts for $79$ percent of the GPU time of one training step.
The GPU execution time for a complete \textit{S-RitzNet} training step is reduced by approximately $7$ percent compared to the GPU execution time of a complete \textit{S-GalerkinNet} training step.
All time profiles are obtained using the training times for $P = 30$, and the respective training configurations, including the different floating point precision.
\begin{figure}[!htbp]
\begin{minipage}{.5\textwidth}
  \begin{center}
  \includegraphics[width=0.99\textwidth]{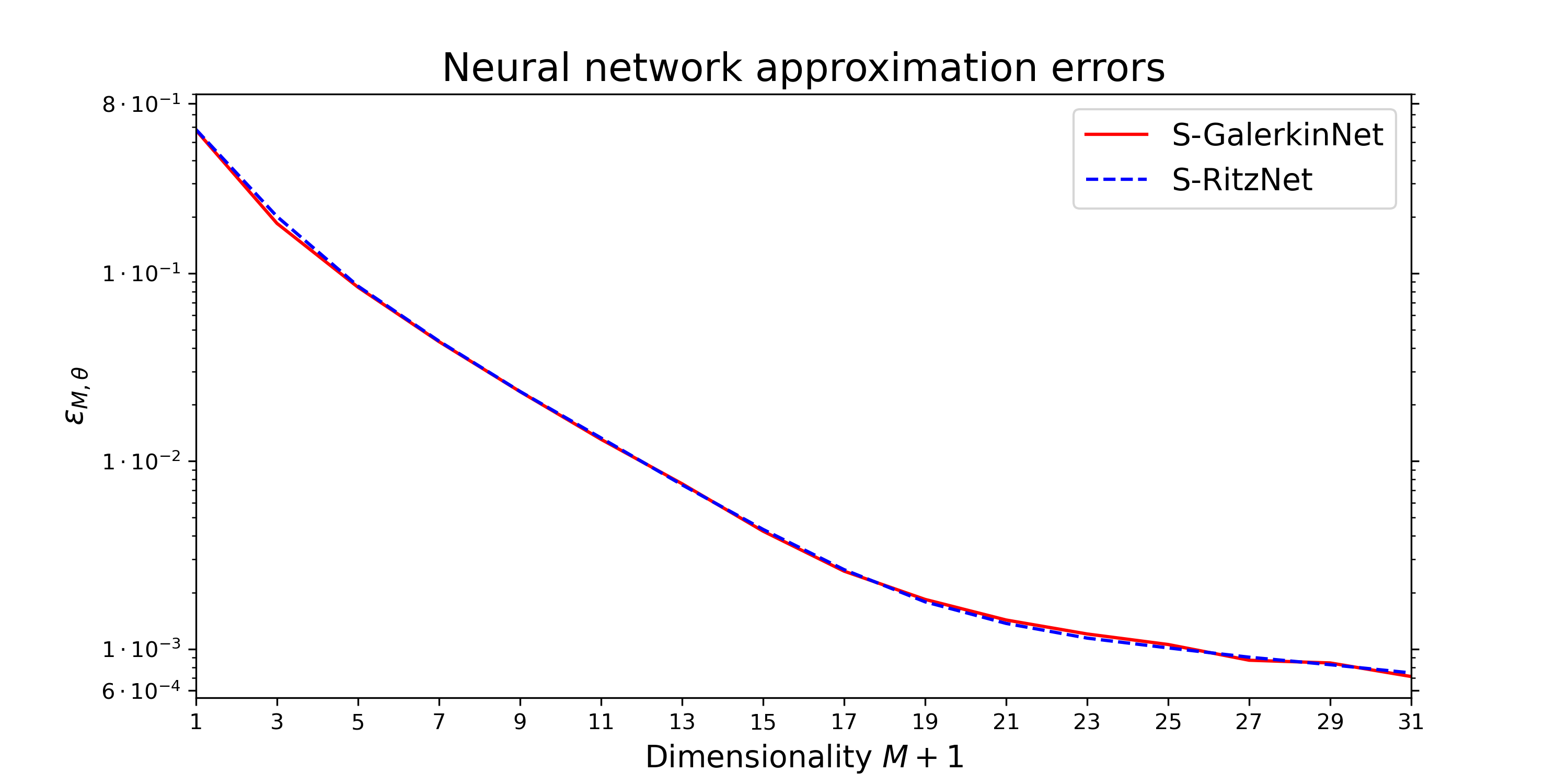} 
  \end{center}
\end{minipage}
\begin{minipage}{.5\textwidth}
  \begin{center}
  \includegraphics[width=0.99\textwidth]{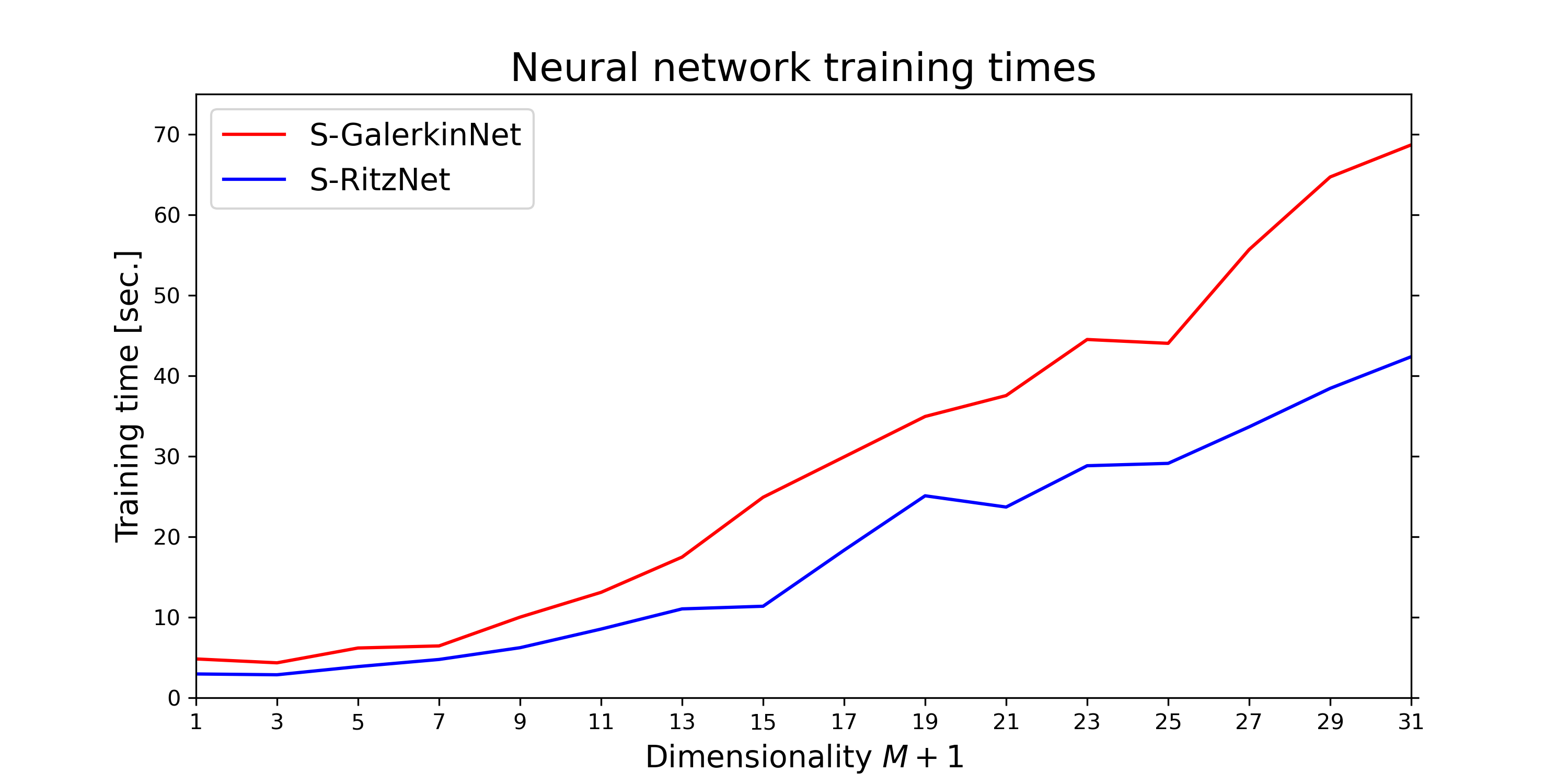}
  \end{center}
\end{minipage}
\caption{Performance and training times of \textit{S-GalerkinNet} and \textit{S-RitzNet}.}
\label{Figure:Experiment1_Data}
\end{figure}

In Figure \ref{Figure:Experiment1_Data}, we present the approximation errors as well as the training times of the neural networks.
The considered experiments are conducted using a single driving random variable, i.e.\ $N = 1$, and maximal total polynomial degrees $P \in \{0, 2, 4, \dots, 30\}$ for the generalized polynomial chaos expansions, resulting in coupled systems of dimensions $M + 1 \in \{1, 3, 5, \dots, 31\}$. Since every odd spectral coefficient of the transformed diffusion coefficient and forcing term polynomial chaos representation is zero, no meaningful information enters the Galerkin system by increasing the maximal total polynomial degree from $2n$ to $2n + 1$, $n \in \N_0$. Therefore we only consider even increments of the polynomial degree $P$.
The reported timings include the whole online phase, including TensorFlow's graph tracing. 
All presented $\epsilon_{M, \theta}$-errors are computed using Monte Carlo integration with $2 \cdot 10^4$ independent sample paths of the analytically available reference solution over the stochastic domain, and via the trapezoidal quadrature rule on $5001$ spatial points that are not used during the training procedure. 
For $P = 30$, both deep learning approaches achieve a relative $\epsilon_{30, \theta}$-error below $0.075\%$.
Figure~\ref{Figure:Experiment4_random_line_rd_ref_sol} shows $30$ paths of deep learning stochastic Galerkin approximations, generated by the \textit{S-RitzNet}
\begin{align*}
u^{(30)}_{\theta_2^*}(\omega, x) &=
\sum_{k = 0}^{30} {\scriptstyle \mathcal{U}}_{k;\theta_2^*}^{\operatorname{SR}}(x) \operatorname{Le}_k(Y(\omega)), \quad x \in \DDr,
	\\
u^{(30)}_{\theta_2^*}(\omega, x) &=
\sum_{k = 0}^{30} {\scriptstyle \mathcal{U}}_{k;\theta_2^*}^{\operatorname{SR}}(\mathcal{T}^{-1}(\omega, x)) \operatorname{Le}_k(Y(\omega)), \quad x \in \DDw
\end{align*}
for $P = 30$ on the reference domain, as well as the same paths of corresponding solution approximations mapped back to the random domain. 
Figure~\ref{Figure:Experiment4_random_line_sol_analytical_plots} shows two paths of the deep learning stochastic Galerkin approximations of \textit{S-GalerkinNet} and \textit{S-RitzNet}, given by
\begin{align*}
u^{(M)}_{\theta_1^*}(\omega, x) &=
\sum_{k = 0}^{M} {\scriptstyle \mathcal{U}}_{k;\theta_1^*}^{\operatorname{SG}}(\mathcal{T}^{-1}(\omega, x)) \operatorname{Le}_k(Y(\omega)),
	\\
u^{(M)}_{\theta_2^*}(\omega, x) &=
\sum_{k = 0}^{M} {\scriptstyle \mathcal{U}}_{k;\theta_2^*}^{\operatorname{SR}}(\mathcal{T}^{-1}(\omega, x)) \operatorname{Le}_k(Y(\omega)),
\end{align*}
on the random domain together with their spatial derivatives and the corresponding analytical solution paths and derivatives. 
The first path depicts a realization $Y(\omega_1) \approx 0.06$, and maximal total polynomial degrees $P \in \{0, 12, 30\}$, resulting in system dimensions $M + 1 \in \{1, 13, 31\}$. 
We see that the convergence of the paths to the analytical solution requires a large total polynomial degree. The second path depicts a realization $Y(\omega_2) \approx 0.99$, and maximal total polynomial degrees $P \in \{0, 4, 12\}$, resulting in the system dimensions $M + 1 \in \{1, 5, 13\}$. We see that a maximal total polynomial degree of $P = 12$ already suffices for the corresponding graphs and their derivatives to agree sufficiently well.
\begin{figure}[!htbp]
\begin{minipage}{.5\textwidth}
  \begin{center}
  \includegraphics[width=0.99\textwidth]{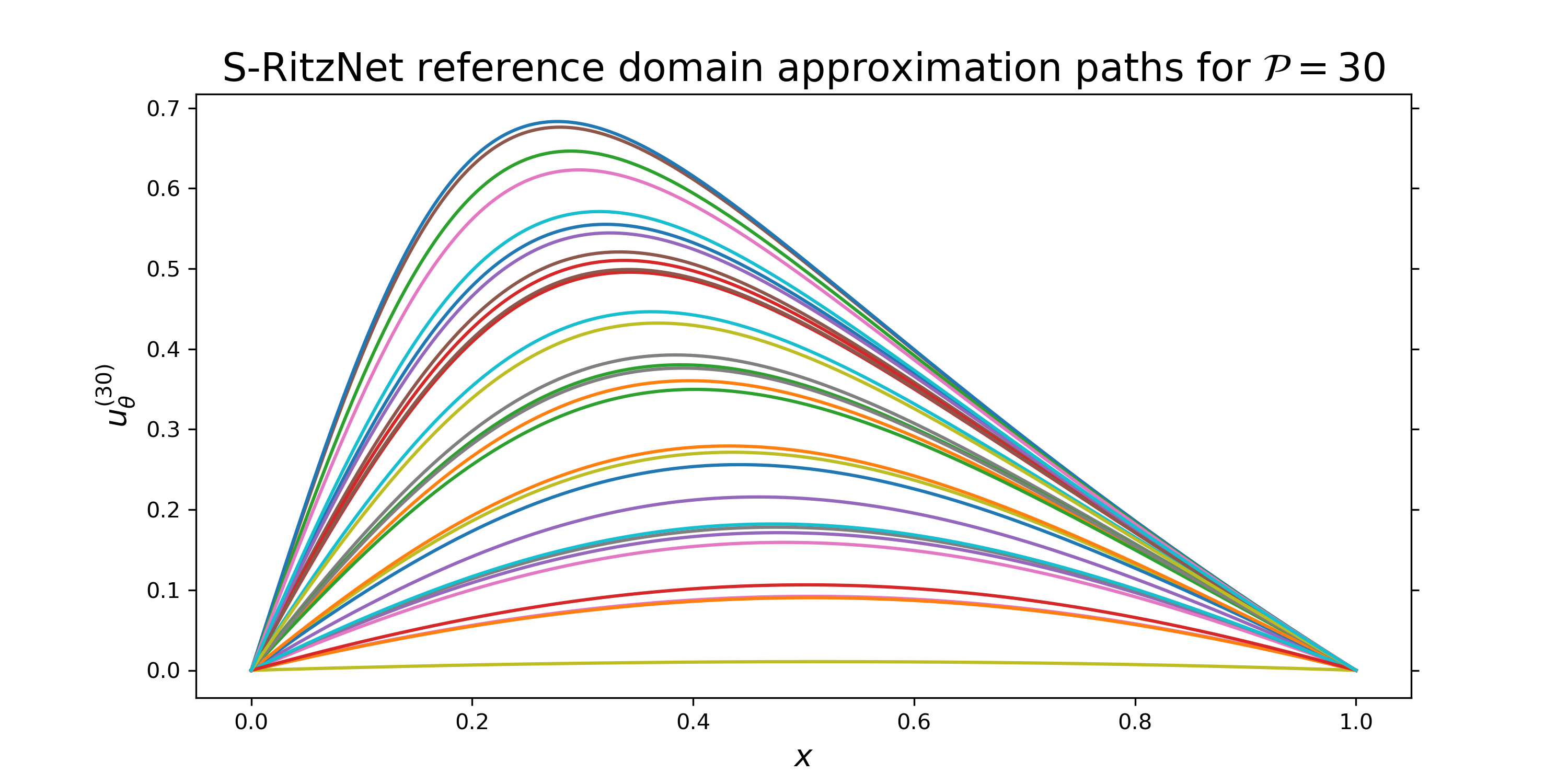} 
  \end{center}
\end{minipage}
\begin{minipage}{.5\textwidth}
  \begin{center}
  \includegraphics[width=0.99\textwidth]{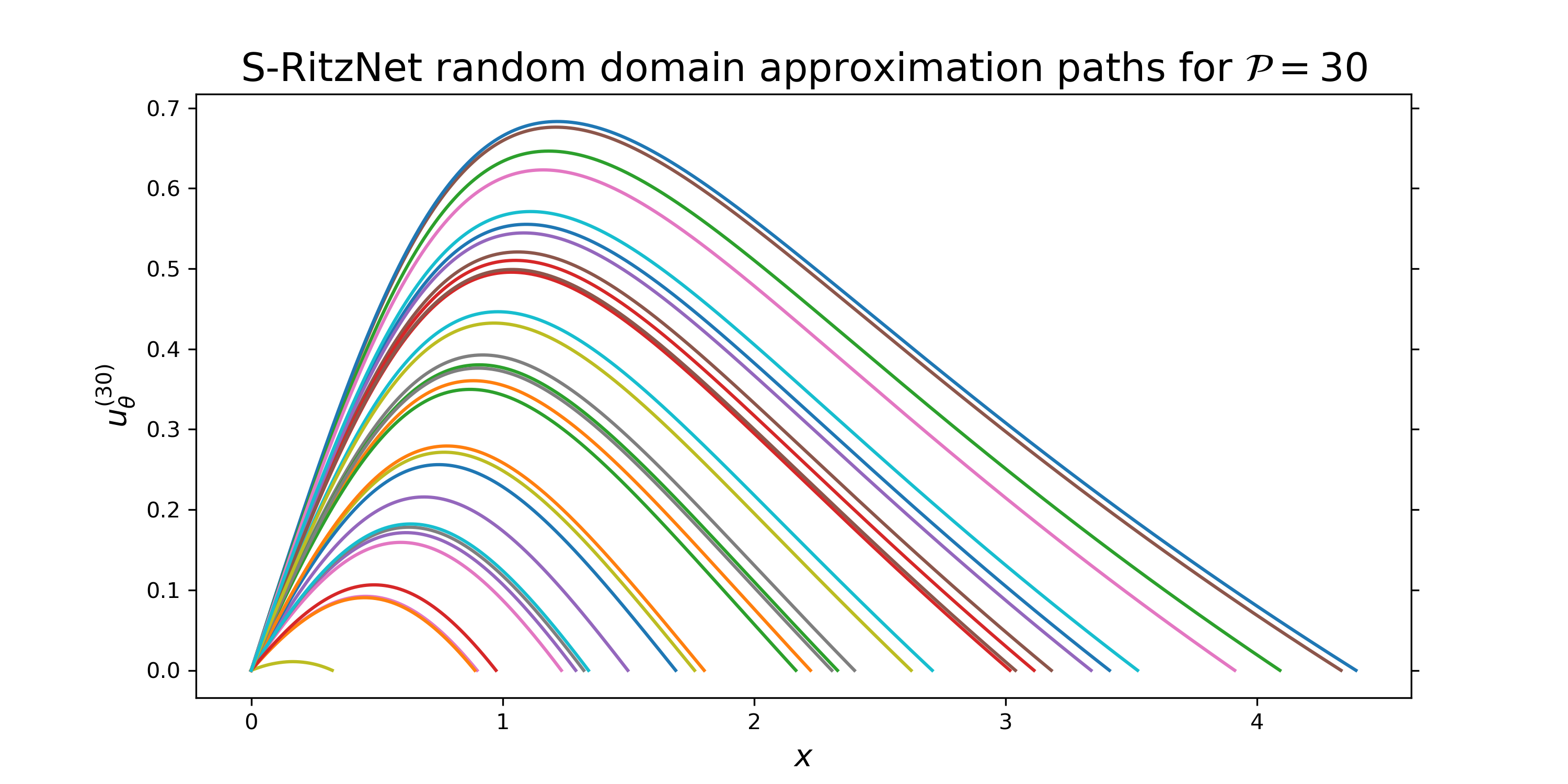}
  \end{center}
\end{minipage}
\caption{\textit{S-RitzNet} solution approximation on the reference and random domain.}
\label{Figure:Experiment4_random_line_rd_ref_sol}
\end{figure}
\begin{figure}[!htbp]
	\centering
  \includegraphics[width=0.99\textwidth]{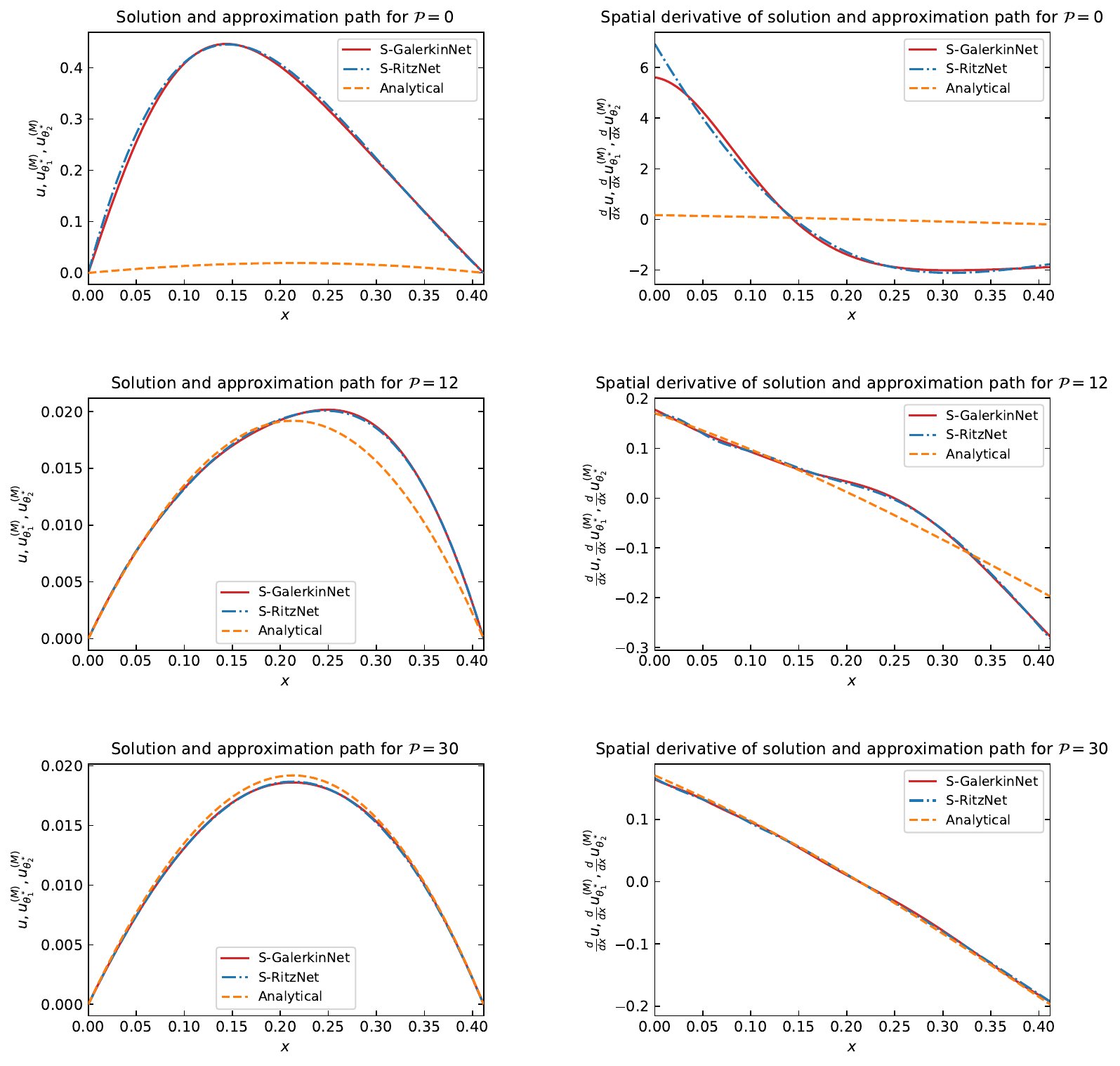} 
  \captionsetup{labelformat=empty}
  \caption{\,}
\label{Figure:Experiment4_random_line_sol_analytical_plots}
\end{figure}
 \clearpage
  \begin{figure}[H]
  \ContinuedFloat
  \centering
  \includegraphics[width=0.99\textwidth]{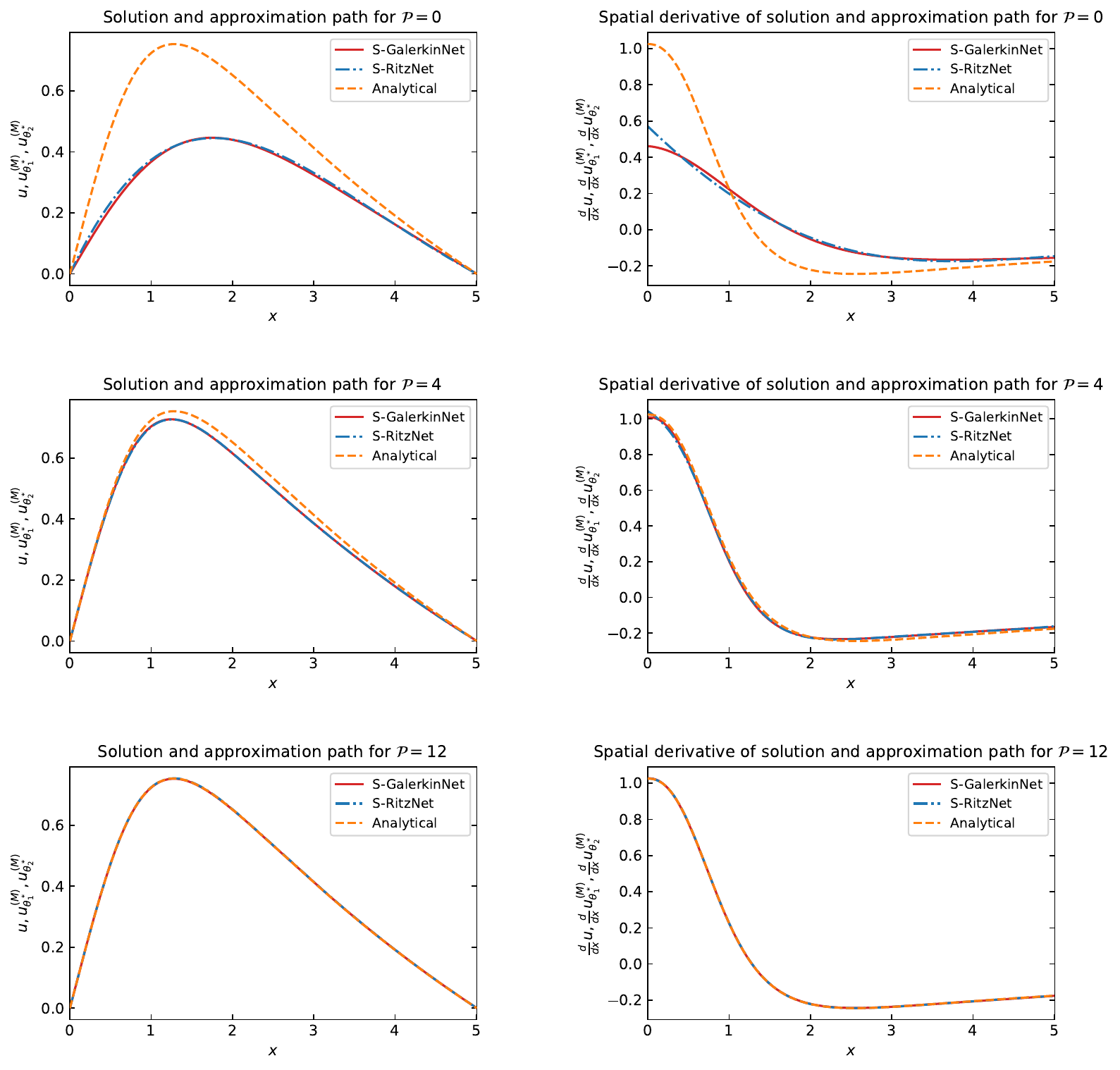}
  \caption{Analytical solution paths and their spatial derivatives together with the S-GalerkinNet and S-RitzNet approximations.}
\end{figure}

As a second example, we consider the elliptic equation in two spatial dimensions posed on a randomly deformed annulus. 
Let the reference domain be given by $\DDr = \{(r, \varphi) \in \R^2 \, : \, r \in [0.5, 1], \, \varphi \in [0, 2\pi) \}$, parameterizing the reference annulus in polar coordinates ($(x_1, x_2) = (r \cos(\varphi), r \sin(\varphi))^T$).
The random domain mapping $\mathcal{T}: \Omega \times [0.5, 1] \times [0, 2 \pi) \to \R^2$, is defined by
\begin{align*}
\mathcal{T}(\omega, r, \varphi) := 
	\begin{pmatrix}
		r \cos(\varphi) \\
		r \sin(\varphi)
	\end{pmatrix}
+ \sum_{k = 1}^{30}
\frac{1}{2(1+k^2)}
\cos(k \varphi)
	\begin{pmatrix}
		r \cos(\varphi) \\
		r \sin(\varphi)
	\end{pmatrix}
Y_k(\omega),
\end{align*}
where $(Y_k)_{k = 1}^{30}$ are independent random variables with $Y_k \sim \mathcal{U}([-1,1])$ for $k = 1, \dots, 30$. This representation highlights that the random domain mapping is given as a KL-type expansion, but it can also be written as
\begin{align*}
\mathcal{T}(\omega, r, \varphi) = r \alpha(\omega, \varphi)
	\begin{pmatrix}
		\cos(\varphi) \\
		\sin(\varphi)
	\end{pmatrix}, 
\quad
\alpha(\omega, \varphi) = 1 + \sum_{k = 1}^{30} \frac{1}{2(1 + k^2)} \cos(k \varphi) Y_k(\omega).
\end{align*}
This representation shows that the deformation of the reference annulus is purely radial and only angle-dependent. 
The Jacobian of the random domain mapping is analytically given by
\begin{align*}
J_{\mathcal{T}}(\omega, r, \varphi) = 
	\begin{pmatrix}
		\alpha(\omega, \varphi) \cos(\varphi) 
		& r \alpha'(\omega, \varphi) \cos(\varphi) - r \alpha(\omega, \varphi) \sin(\varphi) 
			\\[4pt]
		\alpha(\omega, \varphi) \sin(\varphi) 
		& r \alpha'(\omega, \varphi) \sin(\varphi) + r \alpha(\omega, \varphi) \cos(\varphi)			
	\end{pmatrix},
\end{align*}
where $\alpha'(\omega, \varphi)$ refers to the derivative with respect to the angle $\varphi$.
The functional determinant $\operatorname{det}(J_{\mathcal{T}}(\omega, r, \varphi)) = r \alpha^2(\omega, \varphi)$ is uniformly bounded away from zero for every $(\omega, r, \varphi) \in \Omega \times [0.5, 1] \times [0, 2\pi)$. Furthermore, the random domain mapping satisfies Assumption~\ref{Assumption:AdditionalAssmStrongResidual}.

We consider the random domain problem of finding a solution $u: \Omega \times \DDw \to \R$, such that
\begin{align*}
- \nabla \cdot \left( \sqrt{x_1^2 + x_2^2} \, \nabla u(\omega, (x_1, x_2)) \right) 
	& = 
1 
	&& \hspace*{-3cm}
(x_1, x_2) \in \DD(\omega), \, \omega \in \Omega, 	
	\\
u(\omega, (x_1, x_2))
	& =
0
	&& \hspace*{-3cm}
(x_1, x_2) \in \partial \DD (\omega), \, \omega \in \Omega.
\end{align*} 
The transformed matrix diffusion field $\Ar: \Omega \times (0.5, 1) \times [0 , 2 \pi) \to \mathcal{M}_2^2$ and the transformed forcing term $\Fr : \Omega \times (0.5, 1) \times [0, 2\pi) \to \R$ are given in closed form via
\begin{align*}
\Ar(\omega, r, \varphi) = 
	\begin{pmatrix}
		r^2 \left(\alpha(\omega, \varphi) + \frac{(\alpha'(\omega, \varphi))^2}{\alpha(\omega, \varphi)} \right)
			& - r \alpha'(\omega, \varphi)
			\\[7pt]
			- r \alpha'(\omega, \varphi)
			& \alpha(\omega, \varphi)			
	\end{pmatrix},
\quad
\Fr(\omega, r, \varphi) = r \alpha^2(\omega, \varphi).
\end{align*}
The corresponding transformed elliptic random PDE reads: find a solution $u:\Omega \times [0.5, 1] \times [0, 2\pi) \to \R$, such that
\begin{align*}
- \nabla \cdot \left( \Ar (\omega, r, \varphi) \, \nabla u(\omega, r, \varphi) \right) 
	& = 
\Fr (\omega, r, \varphi)
	&& \hspace*{-1cm}
(r, \varphi) \in (0.5, 1) \times [0, 2\pi), \, \omega \in \Omega, 	
	\\
u(\omega, 0.5, \varphi) = u(\omega, 1, \varphi)
	& =
0
	&& \hspace*{-1cm}
\varphi \in [0, 2\pi), \, \omega \in \Omega.
\end{align*}
In the presented experiments, we keep the maximal total polynomial degree of the polynomial chaos expansions fixed $P = 2$, and increase the stochastic dimension in form of considered terms in the Karhunen-Loève-type expansion $N = 1, \dots, 8$.
Since the driving random variables are uniformly distributed, we choose the normalized multivariate tensor product Legendre polynomials as generalized polynomial chaos basis. 
The corresponding dimensions of the coupled Galerkin systems $M + 1 = \frac{1}{2} (N + 2) (N + 1)$ are determined by the number $N$ of terms used in the Karhunen-Loève-type expansion of the random domain mapping. 
More specifically, the truncated transformed matrix diffusion field and forcing term
\begin{align*}
A_{\text{ref}; N}(\omega, r, \varphi) =
\begin{pmatrix}
		r^2 \left(\alpha_N(\omega, \varphi) + \frac{(\alpha_N'(\omega, \varphi))^2}{\alpha_N(\omega, \varphi)} \right)
			& - r \alpha_N'(\omega, \varphi)
			\\[7pt]
			- r \alpha_N'(\omega, \varphi)
			& \alpha_N(\omega, \varphi)			
	\end{pmatrix},
\quad
f_{\text{ref}; N}(\omega, r, \varphi) = r \alpha_N^2(\omega, \varphi)
\end{align*} 
are expanded into their respective polynomial chaos expansions, where
\begin{align*}
\alpha_N(\omega, \varphi)	 = 1 + \sum_{k = 1}^{N} \frac{1}{2(1 + k^2)} \cos(k \varphi) Y_k(\omega). 
\end{align*}
The stochastic integrals to obtain the polynomial chaos expansions are approximated using quasi Monte Carlo integration once in the offline phase and evaluated on a fixed grid of spatial quadrature points.
Homogeneous Dirichlet boundary conditions are enforced via the function $e(r, \varphi) = 4(r - 1/2)(1 - r)$, leading to the neural network approximations
\begin{align*}
{\scriptstyle \mathcal{U}}_{\theta}^{\operatorname{SG}}(r,\varphi) 
	= 4(r - 1/2)(1 - r)\mathcal{N}_{\theta}^{\operatorname{SG}}(r, \varphi), \quad
{\scriptstyle \mathcal{U}}_{\theta}^{\operatorname{SR}}(r, \varphi) 
	= 4(r - 1/2)(1 - r)\mathcal{N}_{\theta}^{\operatorname{SR}}(r, \varphi).
\end{align*}
\begin{figure}
\begin{minipage}{.5\textwidth}
  \begin{center}
  \includegraphics[width=0.99\textwidth]{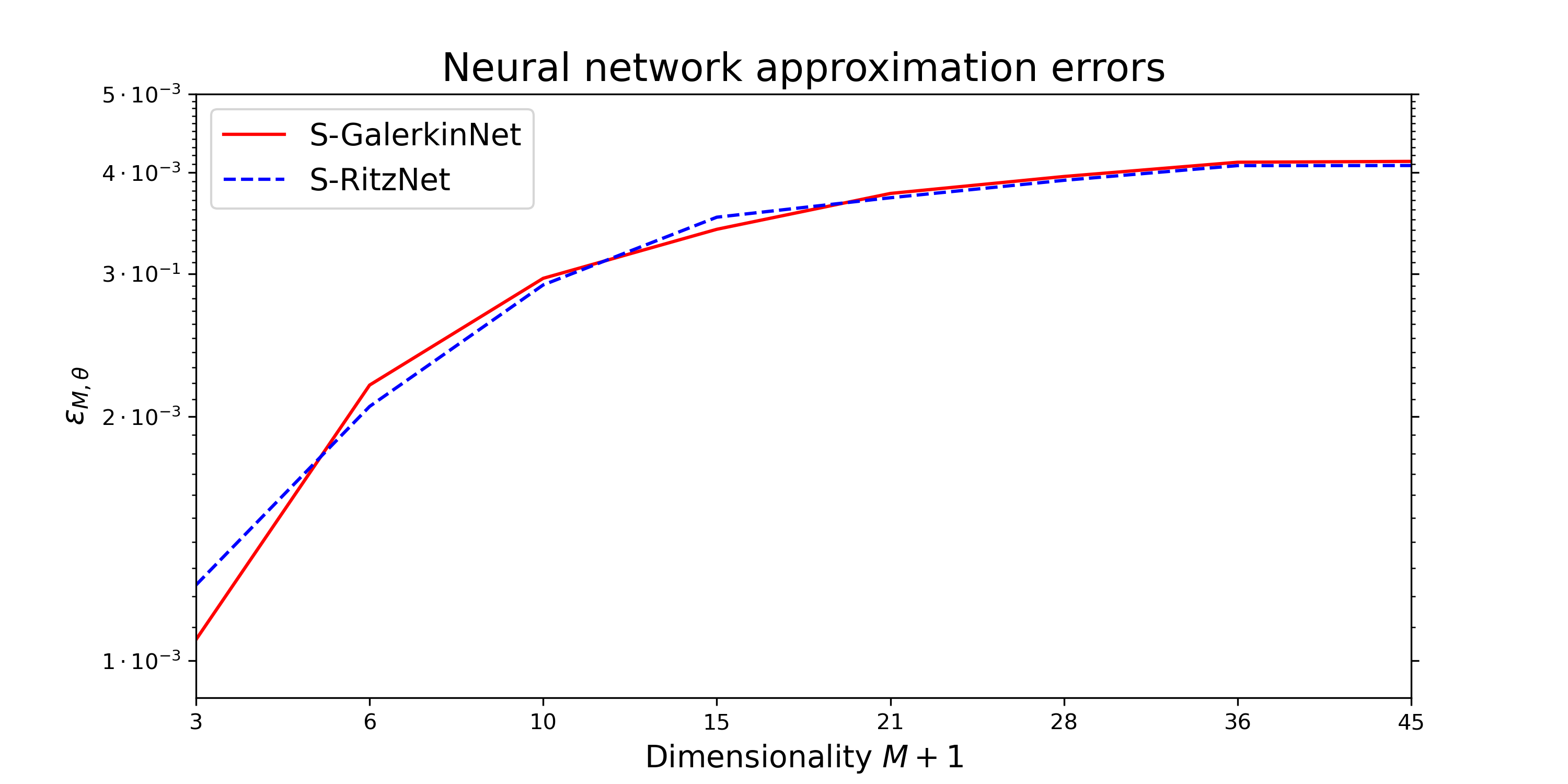} 
  \end{center}
\end{minipage}
\begin{minipage}{.5\textwidth}
  \begin{center}
  \includegraphics[width=0.99\textwidth]{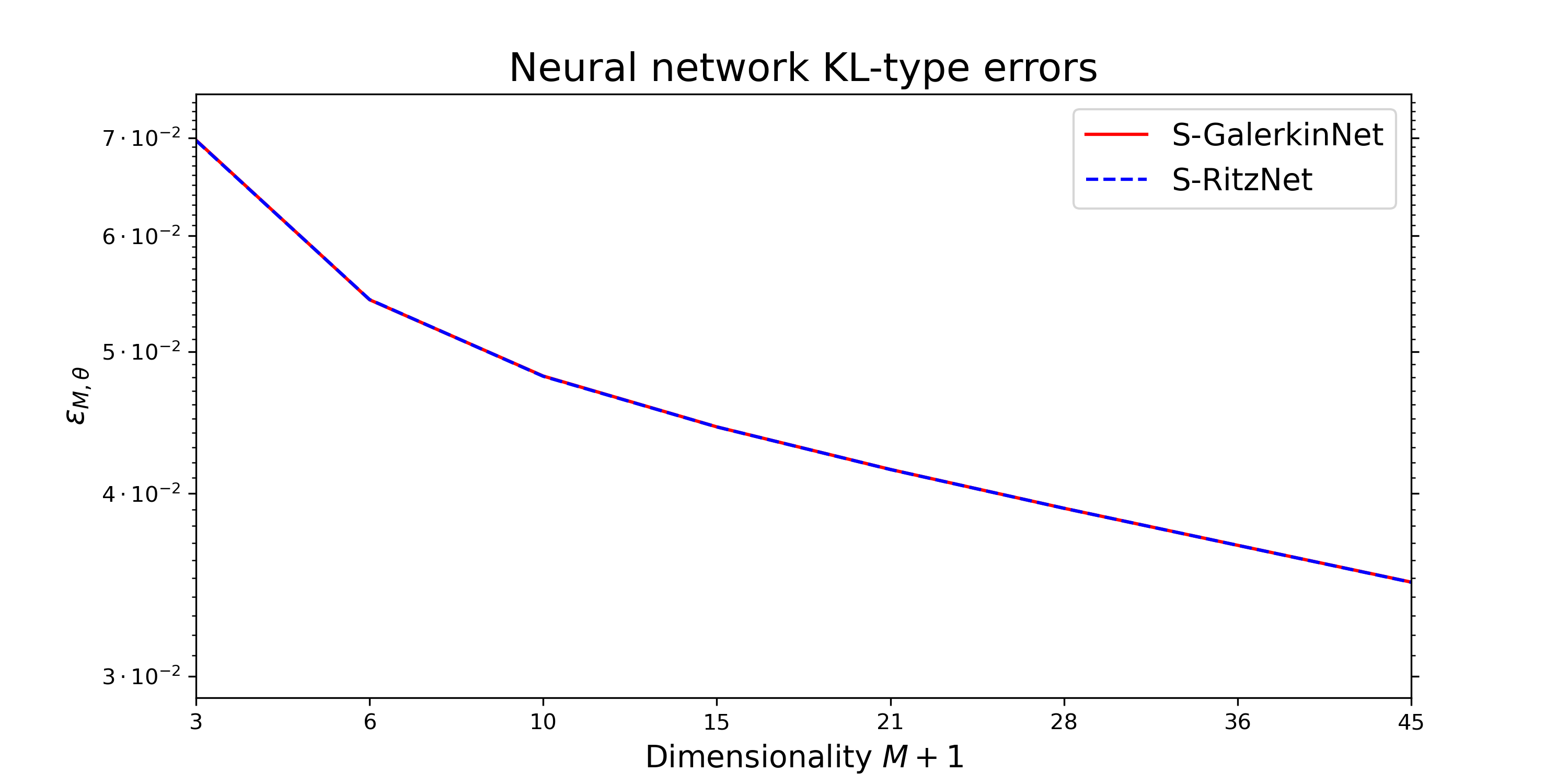}
  \end{center}
\end{minipage}

\begin{center}
\includegraphics[width=0.49\textwidth]{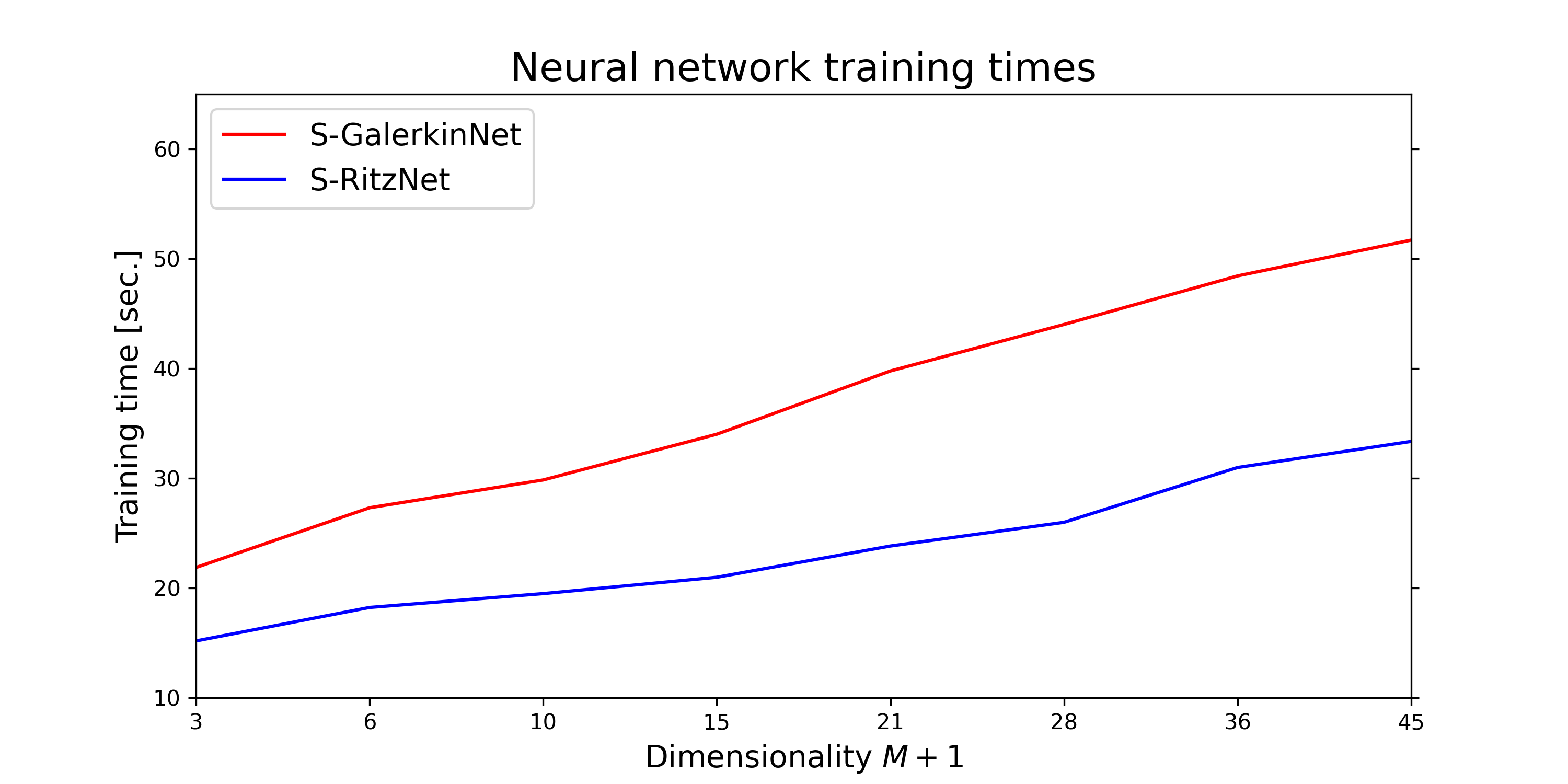}
\end{center}

\caption{Performance and training times of \textit{S-GalerkinNet} and \textit{S-RitzNet}.}
\label{Figure:Experiment2_Data}
\end{figure}
Both neural networks are built with the same architecture. 
To explicitly encode the periodic structure of the angular coordinate $\varphi$, the input variables $(r, \varphi) \in \DDr$ are mapped to $(r, \sin(\varphi), \cos(\varphi))$.
The transformed spatial inputs are first processed by a fully connected layer with $90$ sinusoidal activated neurons. This SIREN (see \citep{Sitzmann2020}) inspired approach enhances the high-frequency expressiveness of the neural networks, mitigating the spectral bias (see \citep{Rahaman2019}).
Next follows a block of six fully connected hidden layers with $90$ neurons, activated by the smooth Swish activation function.
The linearly activated output layer returns a $(M + 1)$-dimensional vector, where the $i$-th component approximates the $i$-th spectral coefficient of the stochastic Galerkin solution. 
The neural networks are trained on a fixed grid of tensor product quadrature points. We use the Gauss-Legendre quadrature rule, on $24$ quadrature points shifted to the interval $[0.5, 1]$, for the radial variable $r \in [0.5, 1]$, and the trapezoidal quadrature rule, on $48$ quadrature points, for the angular variable $\varphi \in [0, 2\pi)$. The corresponding empirical risk calculations are adjusted to include the quadrature weights.

The loss function of the \textit{S-GalerkinNet} requires the Hessian of the neural network output neurons with respect to the spatial inputs. The Hessian is obtained very efficiently via TensorFlow's forward-over-forward mode automatic differentiation, implemented as two batched nested Jacobian-vector products. For this, we stack three identical copies of the input spatial points and stack them into a batched vector and assign the batched tangent directions to obtain the spatial derivatives
\begin{align*}
\frac{\partial}{\partial r} \frac{\partial}{\partial r} {\scriptstyle \mathcal{U}}_{\theta}^{\operatorname{SG}}((r, \varphi)), \quad
\frac{\partial}{\partial r} \frac{\partial}{\partial \varphi} {\scriptstyle \mathcal{U}}_{\theta}^{\operatorname{SG}}((r, \varphi)), \quad
\frac{\partial}{\partial \varphi} \frac{\partial}{\partial \varphi} {\scriptstyle \mathcal{U}}_{\theta}^{\operatorname{SG}}((r, \varphi)), \quad
\end{align*} 
simultaneously in a single batched forward-over-forward evaluation. The Hessian is then assembled utilizing its symmetry for smooth neural networks.
For the \textit{S-RitzNet} only the gradients of the neural network output neurons are necessary, which are obtained by a single batched forward-mode Jacobian-vector product. For this only two identical copies of the input spatial points are stacked into a batched input vector with corresponding tangent directions, to obtain the gradient in a single batched forward-pass. 
Due to this simplified computational architecture, the GPU execution time for a forward pass together with the computation of the necessary derivatives for the \textit{S-RitzNet} is reduced by $79$ percent compared to the \textit{S-GalerkinNet}. The neural network derivatives with respect to the trainable parameters are obtained using TensorFlow's reverse-mode automatic differentiation. The backpropagation through the simplified computational architecture of the \textit{S-RitzNet} results in a $64$ percent reduced GPU execution time to obtain the network parameter derivatives together with the Adam optimizer step compared to the \textit{S-GalerkinNet}.
For the \textit{S-GalerkinNet} a forward pass together with the nested batched Jacobian-vector products to obtain the Hessian of the output neurons account for approximately $20$ percent of the GPU execution time for one training step. Since the Galerkin coupling operators are assembled and evaluated once in the offline phase on the fixed grid of tensor product quadrature points, the assembly of the loss function and computation of the risk has a negligible share of the GPU execution time of approximately $1$ percent of a training step. The main computational effort, accounting for approximately $79$ percent of the GPU execution time, arises from the reverse-mode automatic differentiation to obtain the neural network parameter derivatives together with the Adam optimizer step. 
For the \textit{S-RitzNet}, we see a similar breakdown of the GPU execution times, where the loss and risk assembly take up a larger portion, due to the reduced GPU execution times of the forward and backward propagations. The forward pass together with the single batched Jacobian-vector product accounts for approximately $12$ percent of the GPU execution time of a training step. The assembly of the loss and the risk evaluation account for approximately $6$ percent, while the reverse-mode automatic differentiation together with the Adam optimizer step account for the remaining $82$ percent of the GPU execution time. The GPU execution time of a complete training step of the \textit{S-RitzNet} is reduced by $66$ percent compared to the \textit{S-GalerkinNet}, although fewer optimizer steps are required for the \textit{S-GalerkinNet} to reach the error levels shown.
All the time profiles and percentages are obtained for the training of the neural networks using $N = 8$ terms in the Karhunen-Loève-type expansion.

In Figure~\ref{Figure:Experiment2_Data}, we present the error performance and corresponding training times of the \textit{S-GalerkinNet} and the \textit{S-RitzNet}. All the reported timings present the whole online phase, including TensorFlow's graph tracing. The presented $\epsilon_{M, \theta}$-errors are computed using $10^4$ different solution paths on random domain realizations that are generated pathwise via the finite element method with the Python library FEniCS \cite{Alnaes2015, Logg2012}. The stochastic integrals are approximated using Monte Carlo integration with these solution paths, and the spatial integrals are approximated numerically, for every solution path, on a grid of $22500$ spatial points that are not used during the training. The depicted neural network approximation errors are computed with FEniCS reference solutions that are generated using the same Karhunen-Loève-type truncation index $N = 1, \dots, 8$ for the random domain mapping that is used for the corresponding neural networks. The presented Karhunen-Loève-type errors are computed with FEniCS reference solutions that use the full finite Karhunen-Loève-type expansion of the random domain mapping, consisting of $N = 30$ terms, to additionally include the error introduced by truncating the Karhunen-Loève-type expansion. 
The relative $\epsilon_{M, \theta}$ neural network approximation errors range from just above $0.1\%$ for $N = 1$, resulting in a system dimensionality $M + 1 = 3$ to just above $0.4\%$ for $N = 8$, resulting in a system dimensionality $M + 1 = 45$, for both deep learning approaches. The Karhunen-Loève-type errors, computed against reference solutions with the full $N = 30$ terms in the Karhunen-Loève-type expansion of the random domain mapping range from $7\%$ for $N = 1$ to just under $3.5\%$ for $N = 8$.

Figure~\ref{Figure:Experiment5_random_annulus_solution_paths} shows six different paths of the
deep learning stochastic Galerkin approximations of the \textit{S-GalerkinNet} and the \textit{S-RitzNet}, given by
\begin{align*}
u^{(44)}_{\theta_1^*}(\omega, (r, \varphi)) &=
\sum_{k = 0}^{44} {\scriptstyle \mathcal{U}}_{k;\theta_1^*}^{\operatorname{SG}}(\mathcal{T}^{-1}(\omega, (r, \varphi))) \operatorname{Le}_k(Y(\omega)),
	\\
u^{(44)}_{\theta_2^*}(\omega, (r, \varphi)) &=
\sum_{k = 0}^{44} {\scriptstyle \mathcal{U}}_{k;\theta_2^*}^{\operatorname{SR}}(\mathcal{T}^{-1}(\omega, (r, \varphi))) \operatorname{Le}_k(Y(\omega)),
\end{align*}
on the random domain, with $Y = (Y_1, \dots, Y_{8})$. Three paths are approximated by the \textit{S-GalerkinNet} and three paths are approximated by the \textit{S-RitzNet} with $N = 8$, and corresponding system dimension $M + 1 = 45$.

Figure~\ref{Figure:Experiment5_random_annulus_RDG_path_and_error} shows one realization of the \textit{S-GalerkinNet} solution approximation on the random domain together with its partial radial and angular derivatives and the corresponding pointwise absolute errors against the FEniCS solution path using the same truncation index $N = 8$. In Figure~\ref{Figure:Experiment5_random_annulus_RDR_path_and_error}, the same is depicted for a different random domain realization for the \textit{S-RitzNet}.

\begin{figure}[!htbp]
\centering
\includegraphics[width=0.99\textwidth]{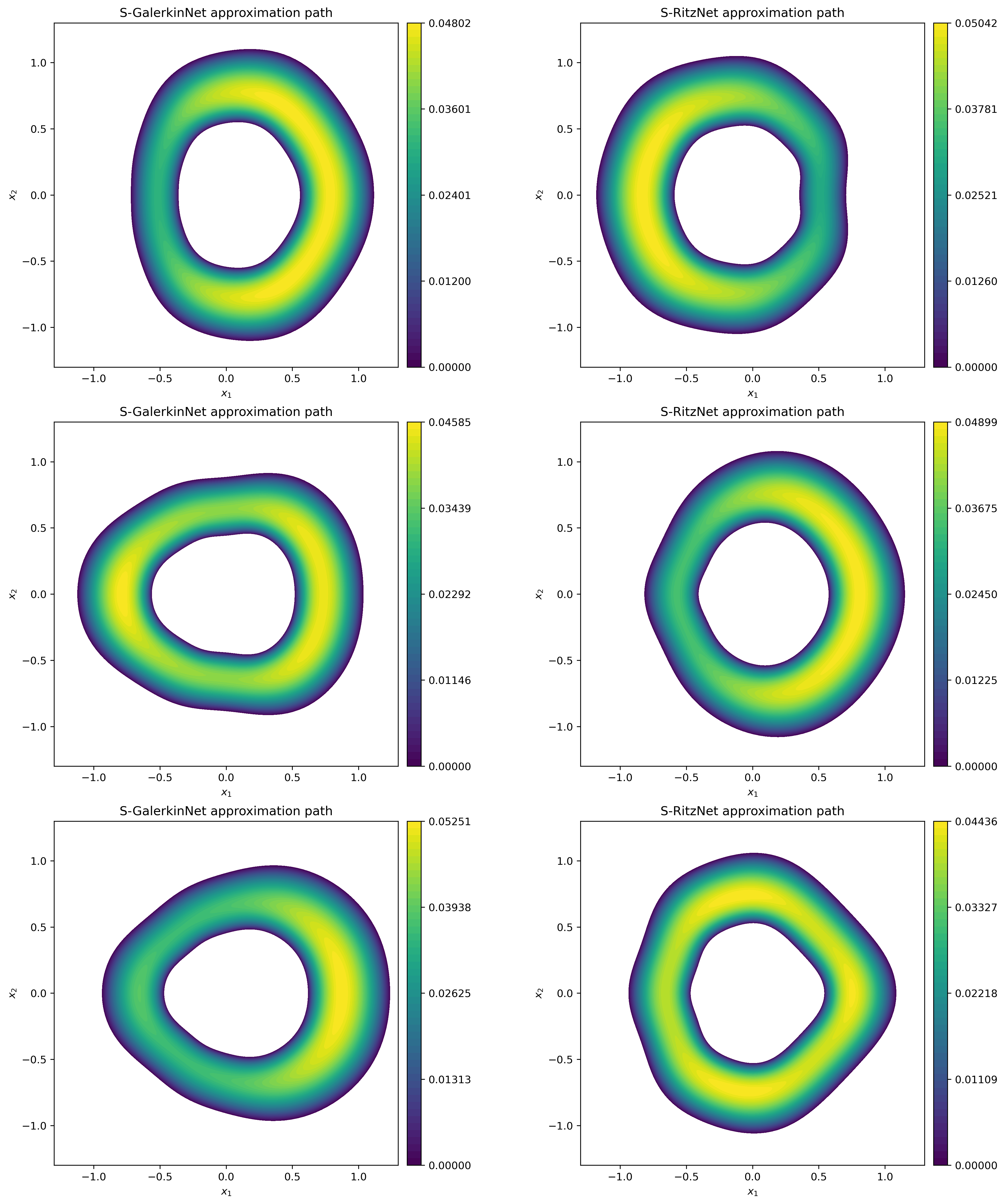} 
\caption{Different random domain solution approximations by the \textit{S-GalerkinNet} and \textit{S-RitzNet}.}
\label{Figure:Experiment5_random_annulus_solution_paths}
\end{figure}

\begin{figure}[!htbp]
\centering
\includegraphics[width=0.99\textwidth]{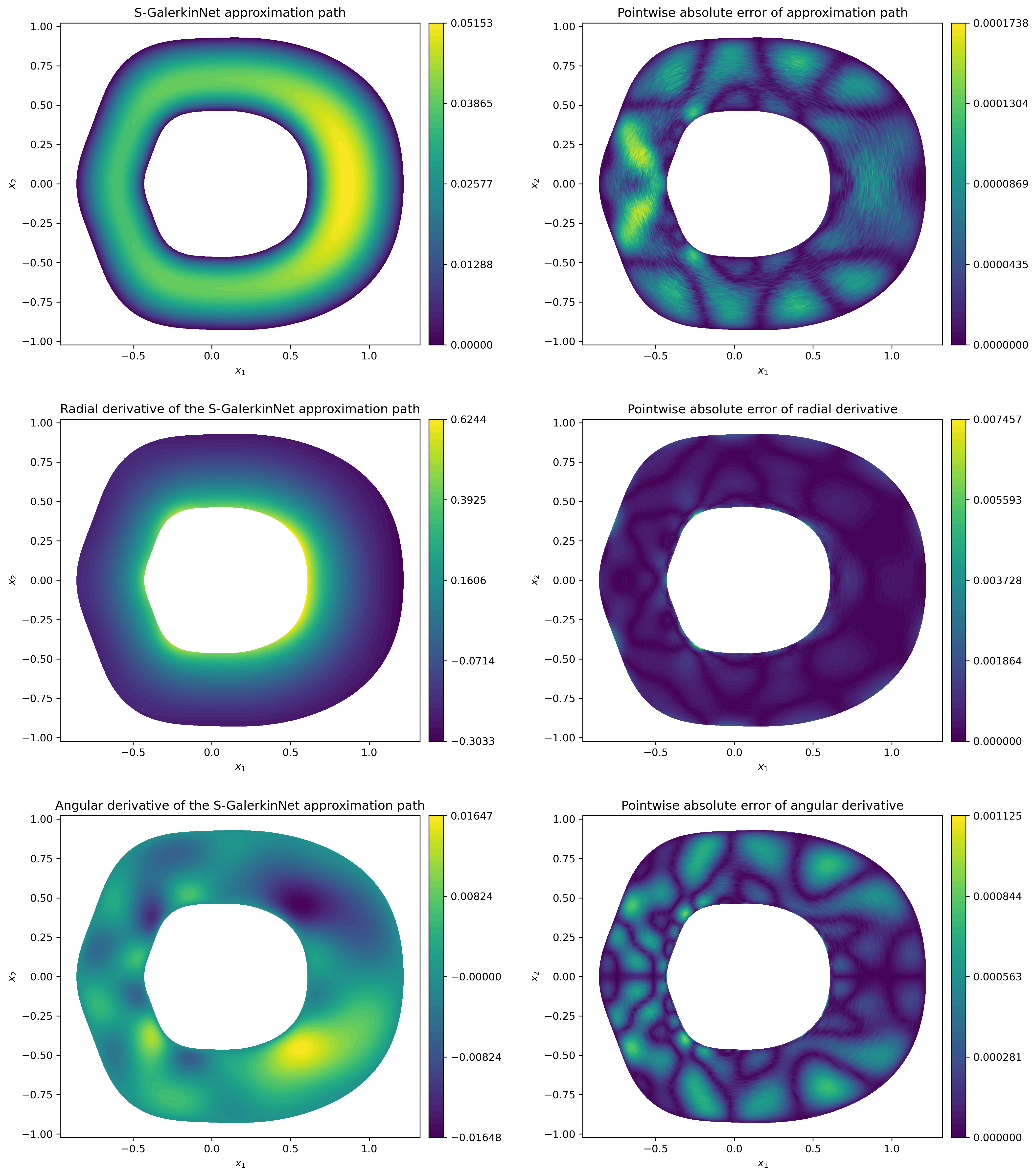} 
\caption{\textit{S-GalerkinNet} approximation of one solution path with pointwise absolute errors.}
\label{Figure:Experiment5_random_annulus_RDG_path_and_error}
\end{figure}

\begin{figure}[!htbp]
\centering
\includegraphics[width=0.99\textwidth]{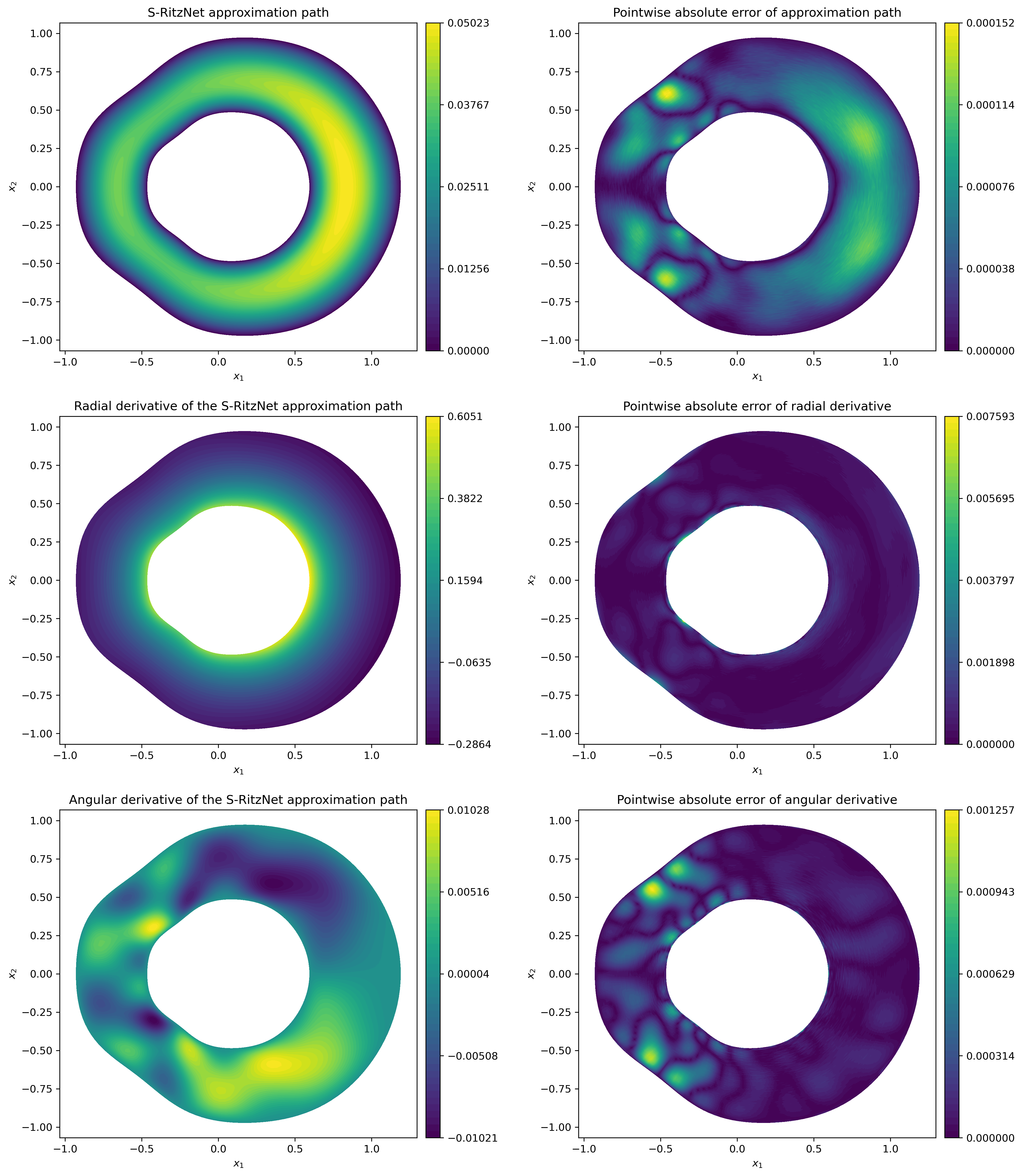} 
\caption{\textit{S-RitzNet} approximation of one solution path with pointwise absolute errors.}
\label{Figure:Experiment5_random_annulus_RDR_path_and_error}
\end{figure}

\newpage
\bibliography{references_dlrd}

\end{document}